\numberwithin{equation}{section}
\theoremstyle{plain}
\newtheorem{theorem}{Theorem}
\numberwithin{theorem}{section}
\newtheorem{proposition}[theorem]{Proposition}
\newtheorem{lemma}[theorem]{Lemma}
\newtheorem{corollary}[theorem]{Corollary}
\newtheorem{remark}[theorem]{Remark}
\theoremstyle{definition}
\newtheorem{definition}[theorem]{Definition}
\newtheorem{example}[theorem]{Example}
\newcommand{\red}[1]{\textcolor{red}{#1}}
\newcommand{\C}{{\mathbb C}}
\newcommand{\K}{{\mathbb K}}
\newcommand{\R}{{\mathbb R}}
\newcommand{\Z}{{\mathbb Z}}
\newcommand{\N}{{\mathbb N}}
\newcommand{\V}{{\mathbb V}}
\newcommand{\PP}{{\mathbb P}}
\def\be{{\boldsymbol{e}}}
\def\bi{{\boldsymbol{i}}}
\def\bj{{\boldsymbol{j}}}
\def\bm{{\boldsymbol{m}}}
\def\bn{{\boldsymbol{n}}}
\def\bd{{\boldsymbol{d}}}
\def\balpha{{\boldsymbol{\alpha}}}
\def\bone{{\boldsymbol{1}}}
\newcommand{\mT}{\mathsmaller{\mathsf{T}}}
\newcommand{\mR}{\mathsmaller{\mathbb{R}}}
\newcommand{\mK}{\mathsmaller{\mathbb{K}}}
\newcommand{\mBW}{\mathsmaller{\mathrm{BW}}}
\newcommand{\dist}{\operatorname{dist}}
\newcommand{\rank}{\operatorname{rank}}
\newcommand{\codim}{\operatorname{codim}}
\newcommand{\RRdeg}{\mathrm{RRdeg}}
\newcommand{\ke}{{\mathcal E}}
\newcommand{\kf}{{\mathcal F}}
\newcommand{\kg}{{\mathcal G}}
\newcommand{\kl}{{\mathcal L}}
\newcommand{\kn}{{\mathcal N}}
\newcommand{\ko}{{\mathcal O}}
\newcommand{\kp}{{\mathcal P}}
\newcommand{\kt}{{\mathcal T}}
\author{Flavio Salizzoni}
\author{Luca Sodomaco}
\address{Max Planck Institute for Mathematics in the Sciences, Leipzig, Germany}
\email{flavio.salizzoni@mis.mpg.de}
\email{luca.sodomaco@mis.mpg.de}
\author{Julian Weigert}
\address{Max Planck Institute for Mathematics in the Sciences, Leipzig, Germany}
\address{Mathematisches Institut, Universit\"at Leipzig, Augustusplatz 10, 04109 Leipzig, Germany}
\email{julian.weigert@mis.mpg.de}
\subjclass[2020]{13P25, 14N05, 14N10, 14Q20, 15A18, 58K05, 90C26}
\keywords{Chern classes, Rayleigh quotient, RR degree, Singular tuples, Tensors, $X$-eigenvectors}
\title[Nonlinear Rayleigh quotient optimization]{Nonlinear Rayleigh quotient optimization}
\date{}
\begin{document}

\begin{abstract}
Rayleigh quotient minimization deals with optimizing a quadratic homogeneous function over a sphere.
Its critical points correspond to the normalized eigenvectors of the symmetric matrix associated with the quadratic form.
In this paper, we consider a homogeneous polynomial objective function $f$ over a sphere, a projective algebraic variety $X$, and we study the {\em $X$-eigenpoints of $f$}, which are classes of critical points of $f$ constrained to the sphere and the affine cone over $X$.
The number of $X$-eigenpoints of a generic polynomial $f$ is the {\em Rayleigh-Ritz degree of $X$}.
This invariant is a version of the Euclidean distance degree of a Veronese embedding of $X$.
We provide concrete formulas in various scenarios, including those involving varieties of rank-one tensors.
\end{abstract}

\maketitle

\section{Introduction}

We study the critical points of the constrained minimization problem
\begin{equation}\label{eq: main optimization}
    \min_{\psi\in S^n}f(\psi)\quad\text{subject to $\psi\in C(X)$}\,,
\end{equation}
where $f$ is a homogeneous polynomial function of degree $\omega\ge 1$ defined over the unit sphere $S^n\subseteq\R^{n+1}$, and $C(X)\subseteq\R^{n+1}$ is the affine cone over a projective variety $X\subseteq\PP^n$.

If $\omega=2$, then $f(\psi)=\psi^\mT H\psi$ for some $(n+1)\times(n+1)$ real symmetric matrix $H$, and we can rephrase \eqref{eq: main optimization} as a {\em constrained Rayleigh quotient minimization problem}.
If additionally $X=\PP^n$, the critical points of \eqref{eq: main optimization} are the normalized eigenvectors of $H$.
The motivation to study the problem \eqref{eq: main optimization} for $X\subsetneq\PP^n$ comes, e.g., from the quantum many-body problem in quantum chemistry, condensed-matter physics, and quantum field theory, see \cite[Section 6.3]{vanderstraeten2019tangent}. In these applications, the main goal is not typically to find the complete spectrum of $H$, the Hamiltonian matrix, but rather to determine only the eigenvector corresponding to the minimum eigenvalue, the so-called {\em ground state}. Furthermore, it is assumed that the ground state belongs to a special algebraic variety, for example, a tensor network variety. Indeed, the use of tensor network structures in physical chemistry is a vast subject area known as density matrix renormalization group (DMRG). The articles \cite{szalay2015tensor,baiardi2020density} offer a detailed review.
The first significant case occurs when $X$ is defined by linear equations. This situation is commonly referred to in the literature as the constrained eigenvalue problem. The problem was studied first in \cite{golub1973modified,golub1983matrix}, and the name was coined in \cite{gander1989constrained}. A detailed overview is given in \cite{zhou2021linear}.
When $X$ is defined by quadratic equations of the form $\psi^TM_i\psi=0$ for some subset of symmetric matrices $M_i$, the problem is studied in \cite{karow2011values,prajapati2022optimizing}, under the assumption that $H$ is a Hermitian matrix. In particular, it is shown in \cite{sharma2015eigenvalue} that, for quadratic constraints, this problem is related to the so-called structured eigenvalue backward error problem, see also \cite{bora2014structured}.

If $\omega\ge 2$ and $X=\PP^n$, the critical points of \eqref{eq: main optimization} correspond to the {\em normalized eigenvectors} of $f$, which is identified with a {\em symmetric tensor} of degree $\omega$, see Remark \ref{rmk: normalized eigenvectors}. The notions of tensor eigenvalue and eigenvector were proposed independently by Lim and Qi in \cite{lim2005singular,qi2005eigenvalues}. There are different types of eigenvectors and eigenvalues in the literature, see \cite{qi2017spectral} for an extensive overview on the subject.

We define {\em $X$-eigenpoint} of $f$ any class $[\psi]\in\PP^n$ of a critical point $\psi$ of \eqref{eq: main optimization}. For a sufficiently generic polynomial $f$, the number of complex $X$-eigenpoints is finite and constant. We call this invariant {\em Rayleigh-Ritz degree (RR degree)} of $X$. The name was suggested by the authors of \cite{borovik2025numerical}, which studies critical points of \eqref{eq: main optimization} for tensor train varieties $X$. In Section \ref{sec: RR degree} we define the RR degree of $X$ and relate it to the {\em distance degree} of the degree-$\omega$ Veronese embedding of $X$, with respect to a Bombieri-Weyl inner product.
The classical {\em Euclidean distance degree} was introduced in \cite{draisma2016euclidean}, and for the notations used in this paper, we mainly follow \cite{dirocco2025osculating}.
Hence, studying the optimization problem \eqref{eq: main optimization} naturally fits into the context of Metric Algebraic Geometry \cite{breiding2024metric}, a branch of applied Algebraic Geometry that focuses on the study of distance and metric problems using algebraic tools.

In the following sections, we establish closed RR degree formulas for various choices of $X$. In Section \ref{sec: Porteous}, we apply Porteous' Theorem to derive closed RR degree formulas for complete intersections and images of sufficiently generic polynomial maps. Section \ref{sec: RR degrees general position} examines the case of an arbitrary nonsingular projective variety $X$ positioned in a sufficiently general manner. Finally, Section \ref{sec: RR degrees singular tuples} focuses on special Segre-Veronese embeddings. Notably, the critical points of equation \eqref{eq: main optimization} correspond to the singular vector tuples of a multihomogeneous polynomial associated with the given polynomial $f$. Additionally, we discuss real $X$-eigenvectors of real homogeneous polynomials and illustrate in Example \ref{ex: no six real} that the findings in \cite{kozhasov2018fully} do not extend to $X$-eigenvectors when $X$ is not a projective space.

\section{The Rayleigh-Ritz degree of a projective variety}\label{sec: RR degree}

We begin by establishing the general notations used throughout the paper. We denote by $\K$ an algebraically closed field of characteristic zero. In our applications $\K$ is either the field $\R$ or $\C$. We use the notation $\psi$ for a vector in $\K^{n+1}$. In coordinates, we consider $\psi=(\psi_0,\ldots,\psi_n)^\mT$ as a column vector. We denote by $\PP_{\mK}^n\coloneqq\PP(\K^{n+1})$ the $n$-dimensional projective space of lines passing through the origin of $\K^{n+1}$. In particular, given a vector $\psi\in\K^{n+1}$, we denote by $[\psi]=[\psi_0:\cdots:\psi_n]$ its equivalence class in $\PP_{\mK}^n$. We use $x=(x_0,\ldots,x_n)$ as unknowns on $\K^{n+1}$ and denote by $\psi^*\in(\K^{n+1})^*$ the linear form defined by $\psi^*(x)\coloneqq \psi_0x_0+\cdots+\psi_nx_n$. An algebraic variety over $\K$ is the solution set in $\K^{n+1}$ of a system of polynomial equations in $n+1$ variables with coefficients in $\K$. We indicate it in the form $X^\mK$, or often simply by $X$ when the ground field is $\K=\C$. We use the notation $\V(f)$ or $\V(A)$ to denote the zero locus of a polynomial $f\in\K[x]$ or of a subset $A\subseteq\K[x]$. When the polynomials that define an algebraic variety $X^\mK\subseteq\K^{n+1}$ are homogeneous, we can view $X^\mK$ as a subset of $\PP_{\mK}^n$. In this case, we say that $X^\mK$ is a projective variety in $\PP_{\mK}^n$. Sometimes we want to stress the difference between a projective variety $X^\mK\subseteq\PP_{\mK}^n$ and the corresponding algebraic set in $\K^{n+1}$: in fact, we denote by $C(X^\mK)$ the affine cone in $\K^{n+1}$ over $X^\mK$ cut out by the same polynomials defining $X^\mK$. Given a subset $A\subseteq\K^{n+1}$, its Zariski closure is denoted by $\overline{A}$, and similarly for subsets of $\PP_{\mK}^n$. Given a function $f\colon\K^{n+1}\to\K$ and an algebraic variety $X^\mK\subset\K^{n+1}$, we denote by $\mathrm{Crit}(f,X^\mK)$ the subset of $\K^{n+1}$ of nonsingular points of $X^\mK$ which are critical for the restriction $f|_{X^\mK}$. We denote by $S^n$ the standard unit sphere in $\C^{n+1}$ of equation $x_0^2+\cdots+x_n^2=1$.

\begin{definition}\label{def: RR degree}
Consider a variety $X\subseteq\PP^n$ and a positive integer $\omega$.
Given a polynomial function $f\colon\C^{n+1}\to\C$ of degree $\omega$, we say that $\psi\in\C^{n+1}$ is a {\em (normalized) $X$-eigenvector} of $f$ if $\psi\in\mathrm{Crit}(f,C(X)\cap S^n)$.
The {\em (normalized) $X$-eigenvalue} of $f$ associated with $\psi$ is $\lambda\coloneqq f(\psi)$. We also refer to $(\psi,\lambda)$ as a {\em (normalized) $X$-eigenpair} of $f$. Furthermore, we call {\em $X$-eigenpoint} of $f$ any class $[\psi]\in\PP^n$ of a normalized $X$-eigenvector of $f$. 

The {\em Rayleigh-Ritz degree (or RR degree) of index $\omega$ of $X$} is the cardinality
\begin{equation}\label{eq: RR degree}
    \RRdeg_{\omega}(X) \coloneqq |\{[\psi]\in\PP^n \mid\text{$[\psi]$ is an $X$-eigenpoint of $f$}\}|
\end{equation}
for a generic polynomial function $f\colon\C^{n+1}\to\C$ of degree $\omega$.
\end{definition}

Observe that the number of complex critical points of $\eqref{eq: main optimization}$ is two times the RR degree of $X$ because $\{\psi,-\psi\}\subseteq\mathrm{Crit}(f,C(X)\cap S^n)$ for every $X$-eigenpoint $[\psi]$ of $f$.
In the following, we omit the term ``normalized'' when considering $X$-eigenvectors and $X$-eigenvalues. The number of $X$-eigenpoints depends on the choice of the polynomial function $f$, but it is constant in an open dense subset of polynomial functions. So, the notion of RR degree is well-defined. This is not a trivial fact, and it is a consequence of Corollary~\ref{corol: RR degree equals BW degree}.
We also point out that Definition \ref{def: RR degree} is motivated by the case $\omega=2$, where we recover an optimization problem as in \eqref{eq: main optimization}. The case of normalized $X$-eigenvectors for $X=\PP^n$ is discussed in the following remark.

\begin{remark}\label{rmk: normalized eigenvectors}
We refer to \cite[\S 2.2]{qi2017spectral} for more details. In the case $X=\PP^n$ with any positive integer $\omega$, given a homogeneous polynomial $f$ of degree $\omega$, we say that a nonzero vector $\psi\in\C^{n+1}$ is a {\em (normalized) eigenvector (or E-eigenvector) of $f$} if $\psi\in S^n$ and there exists $\lambda\in\C$ such that
\[
\frac{1}{\omega}\nabla f(\psi) = \lambda\,\psi\,.
\]
The value $\lambda=f(\psi)$ is the {\em (normalized) eigenvalue of $f$} associated with $\psi$.
The pair $(\psi,\lambda)$ is called a {\em normalized eigenpair of $f$}. Furthermore, we call {\em eigenpoint} of $f$ any class $[\psi]\in\PP^n$ of an eigenvector of $f$. In particular, the eigenpoints of $f$ are the fixed points of the gradient map
\[
\nabla f\colon\PP^n\dashrightarrow\PP^n\,,\quad\nabla f([\psi])=\left[\frac{\partial f}{\partial x_0}(\psi):\cdots:\frac{\partial f}{\partial x_n}(\psi)\right]\,.
\]
\end{remark}

Note that both Definition \ref{def: RR degree} and Remark \ref{rmk: normalized eigenvectors} can be rewritten replacing $S^n$ with the unit sphere $S_q$ associated with any nondegenerate complex quadratic form $q$.
In this work, to study distance functions, we consider only positive-definite real quadratic forms.

As pointed out in the introduction, our first goal is to relate the RR degree of an algebraic variety $X$ to a certain distance degree of $X$. For this reason, we now outline the algebraic study of the distance function from a point restricted to an algebraic variety. Let $V^\mR$ be a real vector space equipped with an inner product $\langle\,,\rangle\colon V^\mR\times V^\mR\to\R$, namely a real positive-definite symmetric bilinear form. We denote by $q(x)\coloneqq\langle x,x\rangle$ the associated quadratic form. In our applications, $V^\mR$ is either $\R^{n+1}$ or the space of homogeneous polynomials of degree $\omega$ with real coefficients. Given a vector $u\in V^\mR$, consider the squared distance function from $u$
\[
\dist_{q,u}^2\colon V^\mR\to\R\,,\quad\dist_{q,u}^2(x)\coloneqq q(x-u)\,.
\]
Given a subset $A\subseteq V^\mR$, typically not containing the data point $u\in V^\mR$, a classical optimization problem is finding the closest point on $A$ to $u$:
\begin{equation}\label{eq: general minimization problem}
\min_{x\in A}\dist_{q,u}^2(x)\,.
\end{equation}
We are interested in the case when $A$ is a real algebraic variety $X^\mR\subseteq V^\mR$. The global minimum of the restriction $\dist_{q,u}^2|_{X^\mR}$ may be a singular point of $X^\mR$: for example, given any data point $u=(u_1,0)\in\R^2$ with $u_1<0$ and the cuspidal cubic $X^\mR=\V(x_1^3-x_2^2)$, then the closest point on $X^\mR$ to $u$ with respect to the Euclidean quadratic form is the cusp $(0,0)\in X^\mR$.
In this paper, we focus on the subset of nonsingular local extrema on $X^\mR$ of the restriction $\dist_{q,u}^2|_{X^\mR}$.
To utilize tools from Intersection Theory in Complex Algebraic Geometry, we consider the complex vector space $V\coloneqq V^\mR\otimes\C$, the Zariski closure $\overline{X^\mR}\subseteq V$, and we determine the subset of critical points on $\overline{X^\mR}$ of the complex-valued polynomial objective function $\dist_{q,u}^2$, according to the following definition.

\begin{definition}\label{def: critical}
Let $V^\mR$ be a real vector space equipped with an inner product $\langle\,,\rangle$ with associated quadratic form $q$. Consider a real algebraic variety $X^\mR\subseteq V^\mR$ and define $X\coloneqq\overline{X^\mR}\subseteq V$. For a given $u\in V$, a point $x\in X$ is {\em critical} for the complex-valued polynomial function $\dist_{q,u}^2\colon V\to\C$ if $x$ is a nonsingular point of $X$ and the vector $x-u$ is in the {\em normal space of $X$ at $x$}:
\[
N_x X \coloneqq \{ y\in V \mid\text{$\langle y,v\rangle=0$ for all $v\in T_xX$}\}\,,
\]
where $T_xX\subseteq V$ is the tangent space of $X$ at $x$. We denote by $\mathrm{Crit}(\dist_{q,u}^2,X)\subseteq V$ the subset of critical points of $\dist_{q,u}^2$ on $X$.
\end{definition}

Applying \cite[Lemma 2.1]{draisma2016euclidean}, there exists a Zariski open subset of complex data points $u\in V$ such that $\mathrm{Crit}(\dist_{q,u}^2,X)$ consists of finitely many points, and its cardinality is the maximum one. The cardinality of this subset is the {\em distance degree of $X$} with respect to $q$. We denote this invariant by $\mathrm{DD}(X,Q)$, where $Q$ is the quadric hypersurface in projective space $\PP(V)$ defined by the complex zeros of the polynomial $q$. In particular, the subset of local minima of $\dist_{q,u}^2|_{X^\mR}$ that are nonsingular points of $X^\mR$ is a subset of $\mathrm{Crit}(\dist_{q,u}^2,X)$. For this reason, we say that the invariant $\mathrm{DD}(X,Q)$ measures the {\em algebraic complexity} of solving the minimization problem \eqref{eq: general minimization problem} for $A=X^\mR$.
The value of $\mathrm{DD}(X,Q)$ strongly depends on the intersection between the projective varieties $X$ and $Q$. In the following, we say that $X$ is {\em in general position with respect to $Q$} if $X$ and $Q$ intersect transversally.
More generally, following the discussion in \cite[Remark 2.3]{kozhasov2023minimal}, the distance degree $\mathrm{DD}(X,Q)$ is a well-defined algebraic invariant for any complex algebraic variety $X\subseteq V$ and for any nonsingular complex quadric hypersurface $Q\subseteq\PP(V)$, provided that $X\not\subseteq C(Q)$. Furthermore, given a projective variety $Z\subseteq\PP(V)$ and a nonsingular complex quadric hypersurface with $Z\not\subseteq Q$, we define the distance degree of the pair $(Z,Q)$ as $\mathrm{DD}(Z,Q)\coloneqq\mathrm{DD}(C(Z),Q)$.

In this work, we are especially interested in distance degrees of algebraic varieties in spaces of symmetric tensors.
Let $\omega$ be a positive integer. A {\em symmetric tensor of order $\omega$ in $n+1$ variables over $\K$} is an element of the tensor space $\mathrm{Sym}^{\omega}(\K^{n+1})^*$, which we identify with the space $\K[x]_{\omega}=\K[x_0,\ldots,x_n]_{\omega}$ of homogeneous polynomials of degree $\omega$ in $n+1$ variables. Consider the Veronese map
\begin{equation}
\nu_{\omega}\colon\PP_{\mK}^n\to\PP(\K[x]_{\omega})\,,\quad\nu_{\omega}([\psi]) \coloneqq [(\psi^*)^{\omega}]\,.
\end{equation}
The image of $\nu_{\omega}$ is the Veronese variety $V_{\omega}^\mK\subseteq\PP(\K[x]_{\omega})$. Its elements are also regarded as {\em rank at most one symmetric tensors} over $\K$. When $\K=\R$, it is natural to equip the real space $\R[x]_{\omega}$ with an inner product induced, via the map $\nu_{\omega}$, by a fixed inner product in $\R^{n+1}$. This motivates a classical definition.

\begin{definition}\label{def: Bombieri-Weyl inner product}
Let $\langle\,,\rangle$ be an inner product on $\R^{n+1}$ and $\omega$ a positive integer. Fix an orthonormal basis $\{e_0,\ldots,e_n\}$ of $\R^{n+1}$ with respect to $\langle\,,\rangle$, and define $x_i=e_i^*\in(\R^{n+1})^*$ for all $i\in\{0,\ldots,n\}$.
The {\em Bombieri-Weyl inner product} associated to $\langle\,,\rangle$ and $\omega$ is the inner product $\langle\,,\rangle_\mBW$ on $\R[x]_{\omega}$ such that
\begin{equation}\label{eq: orthonormal basis BW}
\left\{\binom{\omega}{\alpha}^{\frac{1}{2}}x^\alpha\right\}_{|\alpha|=\omega} = \left\{\left(\frac{\omega!}{\alpha_0!\cdots\alpha_n!}\right)^{\frac{1}{2}}x_0^{\alpha_0}\cdots x_n^{\alpha_n}\right\}_{|\alpha|=\omega}
\end{equation}
is an orthonormal basis of $\R[x]_{\omega}$. We also denote by $q_\mBW$ the Bombieri-Weyl quadratic form associated to $\langle\,,\rangle_\mBW$.
\end{definition}

The following identities are an almost immediate consequence of Definition \ref{def: Bombieri-Weyl inner product}:
\begin{equation}\label{eq: identities BW inner product}
\left\langle\prod_{j=1}^{\omega}\varphi_j^*,\prod_{j=1}^{\omega}\psi_j^*\right\rangle_\mBW = \frac{1}{\omega!}\sum_{\sigma\in\Sigma_{\omega}}\prod_{j=1}^{\omega} \langle \varphi_j,\psi_{\sigma(j)}\rangle\quad\forall\,\varphi_j,\psi_j\in\R^{n+1}\,,\ j\in[\omega]\,.
\end{equation}
In particular, when $\varphi_1=\cdots=\varphi_{\omega}=\varphi$ and $\psi_1=\cdots=\psi_{\omega}=\psi$, we obtain the identities
\[
\left\langle(\varphi^*)^{\omega},(\psi^*)^{\omega}\right\rangle_\mBW = \langle \varphi,\psi\rangle^{\omega}\,,\quad q_\mBW((\psi^*)^{\omega})=q(\psi)^{\omega}\,.
\]

We recall the following well-known properties of the Bombieri-Weyl inner product of two homogeneous polynomials.

\begin{lemma}\label{lem: properties BW inner product}
Let $\langle\,,\rangle$ be an inner product on $\R^{n+1}$ and $\omega$ a positive integer. Let $\langle\,,\rangle_\mBW$ be the Bombieri-Weyl inner product on $\R[x]_{\omega}$ associated with $\langle\,,\rangle$. For any two polynomials $f=(f_\alpha)_{|\alpha|=\omega}$ and $g=(g_\alpha)_{|\alpha|=\omega}$ in $\R[x]_{\omega}$, written as
\[
f(x_0,\ldots,x_n) = \sum_{|\alpha|=\omega}\binom{\omega}{\alpha}f_\alpha x^\alpha\,,\quad g(x_0,\ldots,x_n) = \sum_{|\alpha|=\omega}\binom{\omega}{\alpha}g_\alpha x^\alpha\,,
\]
we have the identities
\begin{equation}\label{eq: BW inner product standard}
\langle f,g\rangle_\mBW = \sum_{|\alpha|=\omega}\binom{\omega}{\alpha}f_\alpha g_\alpha\quad\text{and}\quad q_\mBW(f) = \sum_{|\alpha|=\omega}\binom{\omega}{\alpha}f_\alpha^2\,.
\end{equation}
\end{lemma}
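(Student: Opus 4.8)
The plan is to read off both identities directly from the orthonormal basis exhibited in Definition~\ref{def: Bombieri-Weyl inner product}. Write $b_\alpha \coloneqq \binom{\omega}{\alpha}^{1/2} x^\alpha$ for $|\alpha|=\omega$, so that by \eqref{eq: orthonormal basis BW} the family $\{b_\alpha\}_{|\alpha|=\omega}$ is an orthonormal basis of $\R[x]_\omega$ with respect to $\langle\,,\rangle_\mBW$. First I would rewrite the given expansions so that these basis vectors appear explicitly: since $\binom{\omega}{\alpha} f_\alpha\, x^\alpha = \bigl(\binom{\omega}{\alpha}^{1/2} f_\alpha\bigr)\, b_\alpha$, we get $f = \sum_{|\alpha|=\omega} \binom{\omega}{\alpha}^{1/2} f_\alpha\, b_\alpha$ and likewise $g = \sum_{|\alpha|=\omega} \binom{\omega}{\alpha}^{1/2} g_\alpha\, b_\alpha$. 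Hence the coordinate vectors of $f$ and $g$ in the orthonormal basis $\{b_\alpha\}$ have entries $\binom{\omega}{\alpha}^{1/2} f_\alpha$ and $\binom{\omega}{\alpha}^{1/2} g_\alpha$, respectively.

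Then, using bilinearity and $\langle b_\alpha, b_\beta\rangle_\mBW = \delta_{\alpha\beta}$, I would compute
\[
\langle f, g\rangle_\mBW = \sum_{|\alpha|=\omega}\sum_{|\beta|=\omega} \binom{\omega}{\alpha}^{1/2}\binom{\omega}{\beta}^{1/2} f_\alpha g_\beta\, \langle b_\alpha, b_\beta\rangle_\mBW = \sum_{|\alpha|=\omega} \binom{\omega}{\alpha} f_\alpha g_\alpha ,
\]
which is the first identity in \eqref{eq: BW inner product standard}; specializing $g = f$ gives $q_\mBW(f) = \langle f,f\rangle_\mBW = \sum_{|\alpha|=\omega} \binom{\omega}{\alpha} f_\alpha^2$, the second.

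There is essentially no obstacle here beyond bookkeeping with the normalization convention: the coefficients $f_\alpha$ in the statement are defined with the multinomial factor $\binom{\omega}{\alpha}$ already extracted, so exactly one factor $\binom{\omega}{\alpha}^{1/2}$ is absorbed into the orthonormal basis vector and one remains with the coefficient. (Alternatively, one could derive \eqref{eq: BW inner product standard} by specializing the polarization formula \eqref{eq: identities BW inner product} to products of coordinate forms $x_i$, but the argument via \eqref{eq: orthonormal basis BW} is shorter and self-contained.) In fact the same computation shows that $\{b_\alpha\}$ being orthonormal is \emph{equivalent} to \eqref{eq: BW inner product standard}, so Lemma~\ref{lem: properties BW inner product} is just a restatement of Definition~\ref{def: Bombieri-Weyl inner product} in coordinates.
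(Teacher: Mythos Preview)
Your proof is correct and essentially identical to the paper's: both expand $\langle f,g\rangle_\mBW$ by bilinearity and use that $\langle x^\alpha,x^\beta\rangle_\mBW=\binom{\omega}{\alpha}^{-1}\delta_{\alpha\beta}$ from Definition~\ref{def: Bombieri-Weyl inner product}. The only cosmetic difference is that you absorb one factor $\binom{\omega}{\alpha}^{1/2}$ into the orthonormal basis vector $b_\alpha$ before expanding, whereas the paper keeps both multinomial factors outside and cancels one against $\langle x^\alpha,x^\alpha\rangle_\mBW$ at the end.
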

\begin{proof}
Using the bilinearity of $\langle\,,\rangle_\mBW$, we obtain
\begin{align*}
\langle f,g\rangle_\mBW &= \left\langle \sum_{|\alpha|=\omega}\binom{\omega}{\alpha}f_\alpha x^\alpha,\sum_{|\beta|=\omega}\binom{\omega}{\beta}g_\beta x^\beta\right\rangle_\mBW = \sum_{|\alpha|=|\beta|=\omega}\binom{\omega}{\alpha}\binom{\omega}{\beta}f_\alpha g_\beta\langle x^\alpha,x^\beta\rangle_\mBW\,.
\end{align*}
Applying Definition \ref{def: Bombieri-Weyl inner product}, we get that $\langle x^\alpha,x^\beta\rangle_\mBW\neq 0$ if and only if $\alpha=\beta$, in which case $\langle x^\alpha,x^\alpha\rangle_\mBW=\binom{\omega}{\alpha}^{-1}$.
Then the first identity in \eqref{eq: BW inner product standard} follows. The second identity in \eqref{eq: BW inner product standard} follows immediately from the first one and the definition of $q_\mBW$.
\end{proof}

\begin{proposition}\label{prop: equivalence}
Let $\langle\,,\rangle$ be an inner product on $\R^{n+1}$ with associated quadratic form $q$, $\omega$ a positive integer, and consider the Bombieri-Weyl inner product $\langle\,,\rangle_\mBW$ on  $\R[x]_{\omega}$ associated to $\langle\,,\rangle$. Let $X^\mR\subseteq\PP_{\mR}^n$ be a real projective variety and $f\in\R[x]_{\omega}$.
Consider the projective variety $X\coloneqq\overline{X^\mR}\subseteq\PP^n$ and the image $\nu_{\omega}(X)$ under the Veronese map.
A point $\lambda(\psi^*)^{\omega}\in C(\nu_{\omega}(X))$ with $\psi\in S_q$ and $\lambda\in\C$ belongs to $\mathrm{Crit}(\dist_{q_\mBW,f}^2,C(\nu_{\omega}(X)))$ if and only if $\psi\in\mathrm{Crit}(f,C(X)\cap S_q)$ and $\lambda = f(\psi)$.

Furthermore, let $\widetilde{\lambda}$ be maximum, in absolute value, among all normalized $X$-eigenvalues of $f$ whose corresponding normalized $X$-eigenvector $\widetilde{\psi}$ is real. Then $\widetilde{\lambda}(\widetilde{\psi}^*)^{\omega}$ is the closest point on $\nu_\omega(X^\mR)$ to $f$ in Bombieri-Weyl inner product.
\end{proposition}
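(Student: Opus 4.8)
The plan is to fix an orthonormal basis $\{e_0,\dots,e_n\}$ for $\langle\,,\rangle$ with dual coordinates $x_i=e_i^*$, so that $\langle\,,\rangle$ and its complexification become the standard bilinear form $\sum_i x_iy_i$ on $\C^{n+1}$, $q(x)=\sum_i x_i^2$, and $\nabla f=(\partial f/\partial x_0,\dots,\partial f/\partial x_n)$. I would then show that each of the two sides of the equivalence amounts to the single relation
\[
\tfrac{1}{\omega}\nabla f(\psi)-\lambda\,\psi\ \in\ N_\psi C(X),
\]
together with the facts that $\lambda=f(\psi)$ and that the relevant point is nonsingular. Throughout, $\psi\neq 0$ since $\psi\in S_q$, and I would run the argument for $\lambda\neq 0$; the remaining value $\lambda=0$ gives $x=0$, the vertex of $C(\nu_\omega(X))$, which is a singular point of $C(\nu_\omega(X))$ except in trivial cases and so is discarded.

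The first ingredient is two Bombieri--Weyl identities. Expanding $(\varphi^*)^\omega=\sum_{|\alpha|=\omega}\binom{\omega}{\alpha}\varphi^\alpha x^\alpha$ and comparing with the normalization in Lemma~\ref{lem: properties BW inner product} gives $\langle F,(\varphi^*)^\omega\rangle_\mBW=F(\varphi)$ for every $F\in\C[x]_\omega$ and every $\varphi$; differentiating this at $\varphi=\psi$ along $v$, using $\frac{d}{dt}\big((\psi+tv)^*\big)^\omega\big|_{t=0}=\omega(\psi^*)^{\omega-1}v^*$, yields
\[
\langle f,(\psi^*)^{\omega-1}v^*\rangle_\mBW=\tfrac{1}{\omega}\langle\nabla f(\psi),v\rangle,\qquad
\langle(\psi^*)^\omega,(\psi^*)^{\omega-1}v^*\rangle_\mBW=q(\psi)^{\omega-1}\langle\psi,v\rangle=\langle\psi,v\rangle,
\]
the second being the same computation for $(\psi^*)^\omega$ in place of $f$, together with $q(\psi)=1$. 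The second ingredient is the geometry of the cone. Since $\Phi(\varphi):=(\varphi^*)^\omega$ satisfies $\Phi(c\varphi)=c^\omega\Phi(\varphi)$, one has $C(\nu_\omega(X))=\Phi(C(X))$, and away from the vertex $\Phi$ restricts to a finite map whose fibres are the free orbits of scaling by $\omega$-th roots of unity, hence a local isomorphism of varieties off the vertex. Consequently $x=\lambda(\psi^*)^\omega=\Phi(\lambda^{1/\omega}\psi)$ is a nonsingular point of $C(\nu_\omega(X))$ iff $\psi$ is a nonsingular point of $C(X)$, and then $d\Phi$ identifies $T_xC(\nu_\omega(X))=(\psi^*)^{\omega-1}\cdot\{v^*\mid v\in T_\psi C(X)\}$, using cone-invariance $T_{c\psi}C(X)=T_\psi C(X)$. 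Finally, $C(X)$ being a cone, $\psi\in T_\psi C(X)$ at every nonsingular point, so since $q(\psi)=1\neq 0$ the quadric $S_q=\V(q-1)$ is transversal to $C(X)$ at $\psi$; hence nonsingularity on $C(X)\cap S_q$ is equivalent to nonsingularity on $C(X)$ for such $\psi$, and $T_\psi(C(X)\cap S_q)=T_\psi C(X)\cap\{v:\langle\psi,v\rangle=0\}$.

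Now I would assemble the equivalence. By Definition~\ref{def: critical} and the identities above, $\lambda(\psi^*)^\omega\in\mathrm{Crit}(\dist_{q_\mBW,f}^2,C(\nu_\omega(X)))$ iff $\psi$ is nonsingular on $C(X)$ and $\langle\lambda\psi-\tfrac{1}{\omega}\nabla f(\psi),v\rangle=0$ for all $v\in T_\psi C(X)$, i.e. $\tfrac{1}{\omega}\nabla f(\psi)-\lambda\psi\in N_\psi C(X)$; pairing this with $\psi\in T_\psi C(X)$ and using Euler's relation $\langle\nabla f(\psi),\psi\rangle=\omega f(\psi)$ with $q(\psi)=1$ forces $\lambda=f(\psi)$. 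On the other side, $\psi\in\mathrm{Crit}(f,C(X)\cap S_q)$ iff $\psi$ is nonsingular on $C(X)\cap S_q$ and $\nabla f(\psi)$ is orthogonal to $T_\psi C(X)\cap\{v:\langle\psi,v\rangle=0\}$, which by $(A\cap B)^\perp=A^\perp+B^\perp$ means $\nabla f(\psi)\in N_\psi C(X)+\C\psi$; writing $\nabla f(\psi)=n+\mu\psi$ with $n\in N_\psi C(X)$ and pairing with $\psi$ gives $\mu=\omega f(\psi)$, i.e. $\tfrac{1}{\omega}\nabla f(\psi)-f(\psi)\psi\in N_\psi C(X)$. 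Comparing the two, and invoking the nonsingularity equivalence above, both conditions hold simultaneously precisely when $\lambda=f(\psi)$, which is the claimed equivalence.

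For the final assertion: since $q$ is positive definite every nonzero real vector rescales into $S_q$, and the real projective variety $X^\mR$ is compact, so the continuous function $[\psi]\mapsto f(\psi)^2/q(\psi)^\omega$ attains a maximum $M$ on $X^\mR$, at some $[\widehat\psi]$ which I normalize to $q(\widehat\psi)=1$; assuming $M>0$ and that this maximum is attained at a point that is nonsingular on $C(X)\cap S_q$ (automatic when $X^\mR$ is nonsingular, and in keeping with the paper's convention of working with nonsingular extrema), $\widehat\psi$ is a local extremum of $f$ on the real points of $C(X)\cap S_q$, hence $\widehat\psi\in\mathrm{Crit}(f,C(X)\cap S_q)$ and $\widetilde\lambda^2=M$. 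Then for every $\mu(\varphi^*)^\omega\in C(\nu_\omega(X^\mR))$ with $q(\varphi)=1$, $\mu\in\R$, using $q_\mBW((\varphi^*)^\omega)=q(\varphi)^\omega=1$ and $\langle(\varphi^*)^\omega,f\rangle_\mBW=f(\varphi)$,
\[
\dist_{q_\mBW,f}^2\big(\mu(\varphi^*)^\omega\big)=(\mu-f(\varphi))^2+q_\mBW(f)-f(\varphi)^2\ \ge\ q_\mBW(f)-M,
\]
with equality at $\mu(\varphi^*)^\omega=\widetilde\lambda(\widetilde\psi^*)^\omega$ (where $\mu=f(\widetilde\psi)=\widetilde\lambda$ and $f(\widetilde\psi)^2=\widetilde\lambda^2=M$); thus $\widetilde\lambda(\widetilde\psi^*)^\omega$ realizes the minimal Bombieri--Weyl distance from $f$ to $C(\nu_\omega(X^\mR))$. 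I expect the main obstacle to be the nonsingularity-and-tangent bookkeeping around $C(\nu_\omega(X))$ — verifying that $\Phi$ is a local isomorphism off the vertex, that nonsingularity of $C(X)$ and of $C(X)\cap S_q$ at $\psi$ are equivalent, and cleanly disposing of the vertex $\lambda=0$ — together with, in the closing statement, pinning down the precise nonsingularity hypothesis that makes ``closest point'' literally correct; by contrast, the Bombieri--Weyl identities and the two Euler/Lagrange-multiplier computations should be routine.
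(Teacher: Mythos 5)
Your argument for the first part is essentially the paper's own: the same two Bombieri--Weyl pairings $\langle f,(\psi^*)^{\omega-1}v^*\rangle_\mBW=\tfrac1\omega\langle\nabla f(\psi),v\rangle$ and $\langle(\psi^*)^\omega,(\psi^*)^{\omega-1}v^*\rangle_\mBW=\langle\psi,v\rangle$, the same reduction to the Lagrange condition $\tfrac1\omega\nabla f(\psi)-\lambda\psi\in N_\psi C(X)$, and the same use of Euler's identity at $v=\psi$ to force $\lambda=f(\psi)$. You are in fact more careful than the paper in two places it glosses over: you justify the tangent-space identity $T_{(\psi^*)^\omega}C(\nu_\omega(X))=(\psi^*)^{\omega-1}\{v^*\mid v\in T_\psi C(X)\}$ via the differential of $\varphi\mapsto(\varphi^*)^\omega$ off the vertex, and you track the nonsingularity bookkeeping and the degenerate value $\lambda=0$ explicitly. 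For the final assertion the paper only cites external references, whereas you supply a complete argument via the expansion $\dist^2_{q_\mBW,f}(\mu(\varphi^*)^\omega)=(\mu-f(\varphi))^2+q_\mBW(f)-f(\varphi)^2$ and compactness of $X^\mR$; this is the standard Eckart--Young-type computation and is correct, modulo the hypothesis you rightly flag that the maximizer of $f^2/q^\omega$ on $X^\mR$ is a nonsingular point of $C(X)\cap S_q$ (otherwise it is excluded from $\mathrm{Crit}$ and $\widetilde\lambda^2$ could fall short of the global maximum). No gaps.
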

\begin{proof}
By Definition \ref{def: critical}, a point $\lambda(\psi^*)^{\omega}\in C(\nu_{\omega}(X))$ with $\psi\in S_q$ and $\lambda\in\C$ sits in the subset $\mathrm{Crit}(\dist_{q_\mBW,f}^2,C(\nu_{\omega}(X)))$ if and only if
\[
\left\langle f-\lambda(\psi^*)^{\omega},v\right\rangle_\mBW = 0\quad\forall\,v\in T_{\lambda(\psi^*)^{\omega}}C(\nu_{\omega}(X)) = T_{(\psi^*)^{\omega}}C(\nu_{\omega}(X))\,,
\]
where $T_{(\psi^*)^{\omega}}C(\nu_{\omega}(X))=\{(\psi^*)^{\omega-1}\varphi^*\mid \varphi\in T_{\psi}C(X)\}$. Equivalently, we have that
\begin{equation}\label{eq: identities tangent space}
\left\langle f-\lambda(\psi^*)^{\omega},(\psi^*)^{\omega-1}\varphi^*\right\rangle_\mBW = 0\quad\forall\,\varphi\in T_{\psi}C(X)\,.
\end{equation}
Fix an orthonormal basis $\{e_0,\ldots,e_n\}$ of $\R^{n+1}$ with respect to $\langle\,,\rangle$, with corresponding dual basis $\{x_0,\ldots,x_n\}$ of $(\R^{n+1})^*$. In particular, write $\varphi=(\varphi_0,\ldots,\varphi_n)^\mT$ in the given basis and $\varphi^*=\varphi_0x_0+\cdots+\varphi_nx_n$. On the one hand, expanding $f\in\C[x]_{\omega}$ one verifies that $\langle f,(\psi^*)^{\omega-1}x_i\rangle_\mBW=\frac{1}{\omega}\partial_i f(\psi)$ for all $i\in\{0,\ldots,n\}$, where $\partial_i f$ is the derivative of $f$ in the direction $e_i$. On the other hand, using the identities \eqref{eq: identities BW inner product} and $q(\psi)=1$, we have $\langle (\psi^*)^{\omega},(\psi^*)^{\omega-1}x_i\rangle_\mBW=\psi_i$ for all $i\in\{0,\ldots,n\}$\,. Plugging these identities in \eqref{eq: identities tangent space} yields
\begin{equation}\label{eq: simplified identities tangent space}
\left\langle\frac{1}{\omega}\nabla f(\psi),\varphi\right\rangle = \lambda\langle\psi,\varphi\rangle \quad\forall\,\varphi\in T_{\psi}C(X)\,,
\end{equation}
namely a linear combination of the vectors $\nabla f(\psi)=(\partial_0 f(\psi),\ldots,\partial_n f(\psi))$ and $2\,\psi=\nabla(q(\psi)-1)$ is in the normal space $N_{\psi}C(X)$ with respect to $\langle\,,\rangle$. Assuming that $X\subseteq V$ is defined by the radical ideal generated by the subset $F=\{f_1,\ldots,f_s\}\subseteq\C[x]$, we conclude that a linear combination of $\nabla f(\psi)$ and $\nabla(q(\psi)-1)$ is in the row span of the Jacobian matrix $J_{\psi}(F)=\left(\partial_j f_i\right)_{i,j}$. Therefore, $\psi$ is critical for the polynomial objective function $f\colon S_q\to\C$ constrained to $C(X)$. Note that $\psi\in T_{\psi}C(X)$ because $C(X)$ is a cone, so that we can evaluate \eqref{eq: simplified identities tangent space} at $\varphi=\psi$. Then Euler's identity of homogeneous polynomials yields
\[
f(\psi) = \left\langle\frac{1}{\omega}\nabla f(\psi),\psi\right\rangle = \lambda\langle\psi,\psi\rangle = \lambda\,.
\]
The previous implications can be reversed, thus showing the first part of the statement.
The last part of the statement descends by a similar argument used in the proof of \cite[Theorem 2.19]{qi2017spectral}, see also \cite[Remark 5.2]{dirocco2025osculating}.
\end{proof}

\begin{corollary}\label{corol: RR degree equals BW degree}
Consider a real projective variety $X^\mR\subseteq\PP_{\mR}^n$ and let $X=\overline{X^\mR}\subseteq\PP^n$. Let $q$ be a positive-definite quadratic form on $\R^{n+1}$, $\omega$ a positive integer, and $q_\mBW$ the Bombieri-Weyl quadratic form associated to $q$ and $\omega$. Denote by $Q_\mBW$ the quadric hypersurface in $\PP(\C[x]_{\omega})$ cut out by $q_\mBW$. Then
\[
\RRdeg_\omega(X) = \mathrm{DD}(\nu_{\omega}(X),Q_\mBW)\,.
\]
\end{corollary}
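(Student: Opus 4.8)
The plan is to combine Proposition~\ref{prop: equivalence} with \cite[Lemma 2.1]{draisma2016euclidean}. By the convention recalled just before the statement, $\mathrm{DD}(\nu_{\omega}(X),Q_\mBW)=\mathrm{DD}(C(\nu_{\omega}(X)),Q_\mBW)$, and \cite[Lemma 2.1]{draisma2016euclidean} identifies the latter with $|\mathrm{Crit}(\dist_{q_\mBW,u}^2,C(\nu_{\omega}(X)))|$ for $u$ ranging over a dense open subset of $\C[x]_{\omega}$; this degree is finite because $C(\nu_{\omega}(X))\not\subseteq C(Q_\mBW)$, since $q_\mBW((\psi^*)^{\omega})=q(\psi)^{\omega}$ and the positive-definite form $q$ does not vanish on the Zariski-dense set of real points of $C(X)$. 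The key observation is that the data point $u$ of \cite[Lemma 2.1]{draisma2016euclidean} and the objective function $f$ of Definition~\ref{def: RR degree} live in the \emph{same} vector space $\C[x]_{\omega}$, so it suffices to fix one sufficiently generic $f=u$ and match the $X$-eigenpoints of $f$ with the critical points of $\dist_{q_\mBW,f}^2$ on $C(\nu_{\omega}(X))$; comparing cardinalities then gives the identity and, incidentally, shows that the number of $X$-eigenpoints is constant on a dense open set, so $\RRdeg_{\omega}(X)$ is well defined. (We argue with the variant of $\RRdeg_{\omega}(X)$ using $S_q$ in place of $S^n$; the two coincide by the observation following Definition~\ref{def: RR degree}.)

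The matching itself is Proposition~\ref{prop: equivalence}. Since $\nu_{\omega}$ is injective, every point of $C(\nu_{\omega}(X))\setminus C(Q_\mBW)$ is uniquely of the form $\lambda(\psi^*)^{\omega}$ with $q(\psi)=1$ and $\lambda\in\C^{\times}$, up to the substitution $(\psi,\lambda)\mapsto(-\psi,(-1)^{\omega}\lambda)$, which does not change $\lambda(\psi^*)^{\omega}$; by Proposition~\ref{prop: equivalence} such a point is critical for $\dist_{q_\mBW,f}^2$ exactly when $[\psi]$ is an $X$-eigenpoint of $f$ with $\lambda=f(\psi)$. Thus, with $\psi$ chosen so that $q(\psi)=1$, the assignment $[\psi]\mapsto f(\psi)(\psi^*)^{\omega}$ puts the $X$-eigenpoints of $f$ at which $f$ is nonzero into bijection with the critical points of $\dist_{q_\mBW,f}^2$ on $C(\nu_{\omega}(X))$ that lie off $C(Q_\mBW)$. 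Finally $C(\nu_{\omega}(X))\cap C(Q_\mBW)$ equals the union of the cone vertex $0$ with the cone over $\nu_{\omega}(X\cap\V(q))$, and $0$ is itself a critical point of $\dist_{q_\mBW,f}^2$ precisely when $f$ vanishes at some $X$-eigenpoint, again by Proposition~\ref{prop: equivalence}.

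Hence everything reduces to the single point that I expect to be the only delicate step: for generic $f$, the critical locus $\mathrm{Crit}(\dist_{q_\mBW,f}^2,C(\nu_{\omega}(X)))$ avoids $C(Q_\mBW)$ entirely. Granting this, $f$ vanishes at no $X$-eigenpoint and there are no critical points over $X\cap\V(q)$, so the assignment above is a bijection between \emph{all} $X$-eigenpoints of $f$ and \emph{all} critical points of $\dist_{q_\mBW,f}^2$ on $C(\nu_{\omega}(X))$, and the two cardinalities $\RRdeg_{\omega}(X)$ and $\mathrm{DD}(\nu_{\omega}(X),Q_\mBW)$ agree. To prove the claim I would argue by dimension on the ED correspondence $\mathcal{E}\subseteq C(\nu_{\omega}(X))\times\C[x]_{\omega}$, the Zariski closure of the set of pairs $(t,u)$ with $t$ a smooth point of $C(\nu_{\omega}(X))$ and $t-u\in N_tC(\nu_{\omega}(X))$: treating one irreducible component of $X$ at a time, $\mathcal{E}$ has pure dimension $\dim\C[x]_{\omega}$ (the fibers of its projection to the smooth locus of $C(\nu_{\omega}(X))$ being normal affine subspaces of the complementary dimension), while the closed subset of $\mathcal{E}$ defined by $t\in C(Q_\mBW)$ is proper in $\mathcal{E}$ — again because $C(\nu_{\omega}(X))\not\subseteq C(Q_\mBW)$, each component of $X$ having real points as $X=\overline{X^\mR}$ — hence of strictly smaller dimension, so it projects into a proper closed subset $W\subsetneq\C[x]_{\omega}$. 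As \cite[Lemma 2.1]{draisma2016euclidean} also gives that for $f$ outside a proper closed subset the fiber of $\mathcal{E}$ over $f$ is exactly $\mathrm{Crit}(\dist_{q_\mBW,f}^2,C(\nu_{\omega}(X)))$, we conclude that for $f$ in a dense open subset this critical locus misses $C(Q_\mBW)$, as wanted. The positive-definiteness of $q$ enters exactly here: it forces $X\not\subseteq\V(q)$, hence $C(\nu_{\omega}(X))\not\subseteq C(Q_\mBW)$, which is what makes this dimension count run.
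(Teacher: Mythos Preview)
Your proposal is correct and follows the same approach as the paper, namely deducing the identity from Proposition~\ref{prop: equivalence} together with the genericity of the data point from \cite[Lemma 2.1]{draisma2016euclidean}. The paper in fact gives no explicit proof of this corollary, treating it as immediate from Proposition~\ref{prop: equivalence}; your argument carefully supplies the bijection and the dimension count on the ED correspondence needed to ensure that, for generic $f$, no critical point lies on $C(Q_\mBW)$---details the paper leaves implicit.
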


\begin{example}
We study the normalized eigenpairs of the binary cubic
\[
f(x_0,x_1)=2\,x_0^3-3\,x_0^2x_1+6\,x_0x_1^2-x_1^3\,,
\]
in particular, $X=\PP^1$ in this case. Following Remark \ref{rmk: normalized eigenvectors}, we compute the eigenpoints of $f$, namely the fixed points of $\nabla f\colon\PP^1\dashrightarrow\PP^1$. Hence these are the points $[\psi_0:\psi_1]\in\PP^1$ such that the rank of
\[
A_f(\psi)\coloneqq
\begin{pmatrix}
    \nabla f(\psi) \\
    \psi
\end{pmatrix}
=
\begin{pmatrix}
    6(\psi_0^2-\psi_0\psi_1+\psi_1^2) & -3(\psi_0-4\,\psi_0\psi_1+\psi_1^2)\\
    \psi_0 & \psi_1
\end{pmatrix}
\]
is less than two, or equivalently $0 = \det A_f(\psi) = (\psi_0-2\,\psi_1)(\psi_0-\psi_1)(\psi_0+\psi_1)$. This gives the locus $\{[2:1],[1:1],[1:-1]\}\subseteq\PP^1$ of eigenpoints of $f$, corresponding to the normalized eigenpairs in the following table:
\begin{center}
\begin{tabular}{c||c|c|c}
		$\psi$ & $\pm(\frac{2\sqrt{5}}{5},\frac{\sqrt{5}}{5})$ & $\pm(\frac{\sqrt{2}}{2},\frac{\sqrt{2}}{2})$ & $\pm(\frac{\sqrt{2}}{2},-\frac{\sqrt{2}}{2})$ \\[2pt]\hline
		$\lambda=f(\psi)$ & $\pm\frac{3\sqrt{5}}{5}$ & $\pm\sqrt{2}$ & $\pm 3\sqrt{2}$
\end{tabular}
\end{center}
The three eigenpairs above correspond to the polynomials $g_1=\frac{3}{25}(2x_0+x_1)^3$, $g_2=\frac{1}{2}(x_0+x_1)^3$, and $g_3=\frac{3}{2}(x_0-x_1)^3$ on the affine cone over the Veronese variety $V_3\subseteq\K[x]_3$, respectively. The points and the corresponding eigenpairs are colored in this order in orange, blue, and magenta in Figure \ref{fig: eigenvectors binary cubic}. Considering the notations of Lemma \ref{lem: properties BW inner product}, the coordinates of $f$ are $(2,-1,2,-1)$, while $g_1=(\frac{24}{25},\frac{12}{25},\frac{6}{25},\frac{3}{25})$, $g_2=(\frac{1}{2},\frac{1}{2},\frac{1}{2},\frac{1}{2})$, and $g_3=(\frac{3}{2},-\frac{3}{2},\frac{3}{2},-\frac{3}{2})$. Using the formulas in \eqref{eq: BW inner product standard}, one verifies that the squared distances $\dist_{q_\mBW,f}^2(g_i)=\langle f-g_i,f-g_i\rangle_\mBW$ between $f$ and the polynomials $g_i$ are respectively $18.2$, $18$, and $2$. In particular, $g_3$ is the closest rank-one binary form of degree $3$ to $f$ in Bombieri-Weyl norm, and corresponds to the highest normalized eigenvalue of $f$.\hfill$\diamondsuit$

\begin{figure}
\begin{minipage}{.47\textwidth}
\centering
\begin{overpic}[width=0.7\textwidth]{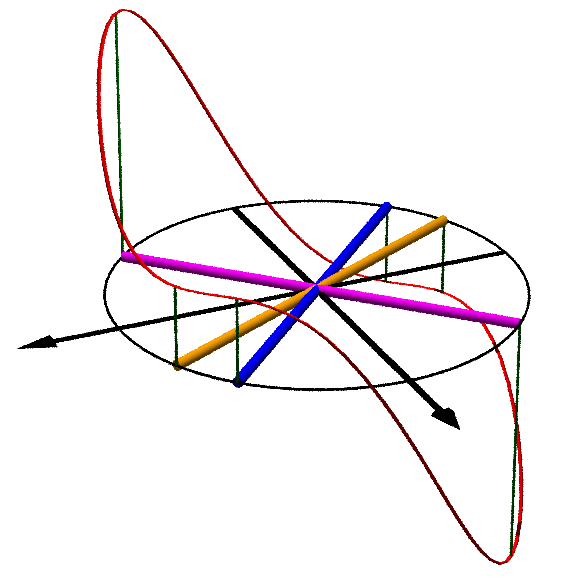}
	\put (3,34) {{\large $v_0$}}
	\put (75,20) {{\large $v_1$}}
\end{overpic}
\end{minipage}
\begin{minipage}{.47\textwidth}
\hspace*{-0.7cm}
\centering
\begin{overpic}[width=\textwidth]{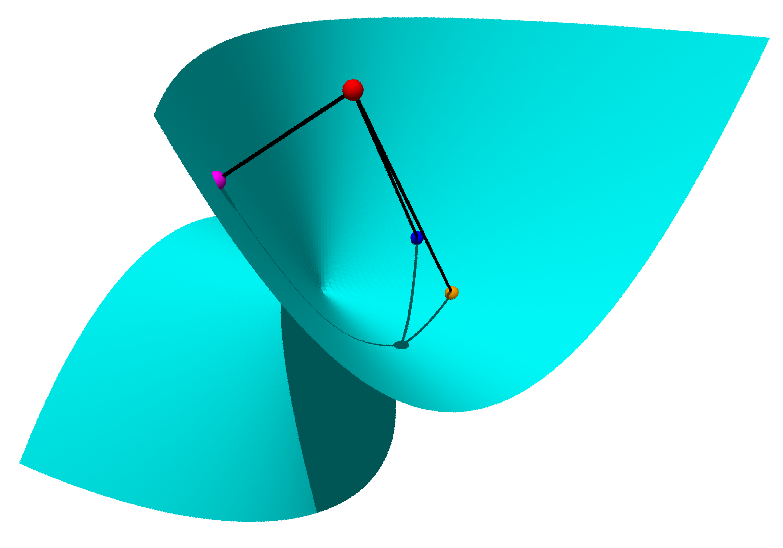}
	\put (50,58) {{\large \red{$f$}}}
	\put (75,57) {{\large $C(V_3^\mR)$}}
\end{overpic}
\end{minipage}
\caption{Comparison between normalized eigenvectors of $f$ (on the left) and critical points of the Bombieri-Weyl distance function from $f$ (on the right).}\label{fig: eigenvectors binary cubic}
\end{figure}
\end{example}

In Corollary \ref{corol: RR degree equals BW degree}, we have established an identity between the Rayleigh-Ritz degree of a projective variety $X$ and a particular distance degree of a Veronese embedding of $X$. Building on this, the following sections explore concrete formulas that compute this invariant, specifically for nonsingular varieties defined either implicitly or as images of polynomial maps. Later on, we focus on varieties of rank-one tensors.

\section{Porteous-type formulas for Rayleigh-Ritz degrees}\label{sec: Porteous}

We begin by providing an intersection theory toolkit needed for the upcoming results. We refer to \cite{fulton1998intersection} for further details.

Consider an irreducible projective variety $X\subseteq\PP^n$ of dimension $m$. We denote by $A^i(X)$ the $i$-th Chow group of $X$, encoding formal linear combinations with integer coefficients of $i$-codimensional irreducible subvarieties of $X$, up to rational equivalence. The Chow ring of $X$ is $A^*(X)=\bigoplus_{i\ge 0}A^i(X)$, equipped with the intersection product.

Let $\ke\to X$ be a vector bundle on $X$ of rank $r$.
For every $i\in\{0,\ldots,r\}$, one can associate to $\ke$ a rational equivalence class $c_i(\ke)\in A^i(X)$, called the {\em $i$-th Chern class} of $\ke$. The {\em total Chern class} of $\ke$ is $c(\ke)\coloneqq\sum_{i=0}^rc_i(\ke)$.
A first classical example is the line bundle $\ko_X(1)\coloneqq \iota^*(\ko_{\PP^n}(1))$ associated to an embedding $\iota\colon X\hookrightarrow\PP^n$. In the following, we simply write $\kl=\ko_X(1)$.
Another fundamental vector bundle on $X$ is the tangent bundle $\kt_X$ of rank $m$. The Chern classes of $X$ are by definition the Chern classes of $\kt_X$, and we denote them simply by $c_i(X)$. For the sake of brevity, we also use the shorthand $c_i$ to indicate $c_i(X)$ when $X$ is clear from the context.
Let $L=c_1(\kl)$. 
For every $i\in\{0,\ldots,m\}$, the class $c_i(\ke)\cdot L^{m-i}$ is equal to an integer times the class of a point in $\PP^n$. That integer is the degree of $c_i(\ke)$, denoted by $\int_X c_i(\ke)\cdot L^{m-i}$. In particular, the degree of $c_m(\ke)\in A^m(X)$ is called the {\em top Chern number of $\ke$}. Furthermore, given a short exact sequence $0\to\ke\to\kf\to\kg\to 0$ of vector bundles on $X$, the total Chern classes satisfy the {\em Whitney sum} property $c(\kf)=c(\ke)\cdot c(\kg)$.

Let $\iota\colon X\hookrightarrow Y$ be a closed embedding of nonsingular varieties with normal bundle $\kn_{X/Y}$. From the exact sequence
\begin{equation}\label{eq: seq adjunction}
    0\to \kt_X \longrightarrow \kt_Y|_X \longrightarrow \kn_{X/Y} \to 0
\end{equation}
and the Whitney sum property, we derive the identity $c(X) = c(\kt_Y|_X)/c(\kn_{X/Y})$.
If $X=Y=\PP^n$, then $c(\PP^n)=(1+h)^{n+1}$, where $h=c_1(\ko_{\PP^n}(1))$ is the class of a hyperplane in $\PP^n$. In general we have $c(\kt_{\PP^n}|_X)=(1+L)^{n+1}$ and $c(X) = (1+L)^n/c(\kn_{X/\PP^n})$.
Hence, knowing $c(\kn_{X/\PP^n})$ allows us to compute the Chern classes of the embedding $\iota$. An important example is when $X\hookrightarrow\PP^n$ is the complete intersection of $c$ hypersurfaces in $\PP^n$ of degrees $\delta_1,\ldots,\delta_c$. In this case $\kn_{X/\PP^n} = \bigoplus_{i=1}^c\ko_X(\delta_i)$, hence by the Whitney sum property
\[
c(X) = \frac{c(\kt_{\PP^n}|_X)}{c(\kn_{X/\PP^n})} =  \frac{(1+L)^{n+1}}{\prod_{i=1}^c c(\ko_X(\delta_i))} = \frac{(1+L)^{n+1}}{\prod_{i=1}^c(1+\delta_iL)}\,.
\]
For example, if $X\hookrightarrow\PP^n$ is a hypersurface of degree $\delta$, then
\begin{equation}\label{eq: total Chern class hypersurface}
c(X) = \frac{(1+L)^{n+1}}{1+\delta L} = \sum_{i=0}^{n-1}\left(\sum_{j=0}^i\binom{n+1}{j}(-\delta)^{i-j}\right)L^i\,.
\end{equation}
Another important example is the Veronese variety $X=V_d$, image of the Veronese embedding $\nu_d\colon \PP^m\hookrightarrow \PP^M$, where $M=\binom{m+d}{d}-1$. Then $\nu_d^*\ko_{\PP^M}(1)=\ko_{\PP^m}(d)$, in particular $L=dh$ with $h=c_1(\ko_{\PP^m}(1))$ and
\begin{equation}\label{eq: total Chern class normal bundle Veronese}
c(\kn_{\PP^m/\PP^M}) = \frac{c(\kt_{\PP^M}|_{\PP^m})}{c(\PP^m)} = \frac{(1+dh)^{M+1}}{(1+h)^{m+1}}\,.
\end{equation}
We discuss Chern classes of Segre-Veronese embeddings in Proposition \ref{prop: degrees Chern classes Segre-Veronese}.

In the following theorem, we determine an upper bound for the RR degree of a variety defined implicitly, which is an equality for generic complete intersection varieties.

\begin{theorem}\label{thm: RR degree complete intersection}
Let $X\subseteq\PP^n$ be a variety of codimension $c$ cut out by the polynomials $f_1,\dots,f_m$ of degrees $\delta_1,\dots,\delta_m$ such that the first $c$ of them form a regular sequence. Then for all $\omega\ge 1$
\begin{equation}\label{eq: RR degree complete intersection}
    \RRdeg_\omega(X)\leq  \delta_1\cdots\delta_c \sum_{\substack{i_0+\cdots+i_c=n-c\\i_0,\dots,i_c\geq 0}}\sum_{\ell=0}^{i_0}(\omega-1)^\ell\cdot\prod_{k=1}^c(\delta_k-1)^{i_k}\,.
\end{equation}
The equality holds if $X$ is a generic complete intersection.
\end{theorem}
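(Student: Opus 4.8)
The plan is to realise the $X$-eigenpoints of a generic $f$ as a Thom--Porteous degeneracy locus on $X$ and to compute (respectively bound) its degree. Recall that $\RRdeg_\omega(X)=\#\mathrm{eig}(f,X)$ for generic $f$, where $\mathrm{eig}(f,X)\subseteq\PP^n$ is the finite set of $X$-eigenpoints of $f$; we may assume $X\not\subseteq\{q=0\}$, otherwise $\RRdeg_\omega(X)=0$ and the bound is trivial. Set $Y\coloneqq\V(f_1,\dots,f_c)$, the complete intersection cut out by the regular sequence, so that $X$ is a union of irreducible components of $Y$ and $\deg X\le\deg Y=\delta_1\cdots\delta_c$. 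A dimension count on the incidence variety $\{(f,[\psi]):[\psi]\in\mathrm{eig}(f,X)\}$, using that for fixed $\psi\neq 0$ the linear map $f\mapsto\nabla f(\psi)$ is onto $\C^{n+1}$, shows that for generic $f$ every $[\psi]\in\mathrm{eig}(f,X)$ is a smooth point of both $X$ and $Y$ with $q(\psi)\neq 0$. At such a point $T_\psi C(X)=T_\psi C(Y)$, hence $N_\psi C(X)=\mathrm{span}\bigl(\nabla f_1(\psi),\dots,\nabla f_c(\psi)\bigr)$, and by Proposition~\ref{prop: equivalence} the point $[\psi]$ is an $X$-eigenpoint precisely when the $c+2$ vectors $\nabla f(\psi),\psi,\nabla f_1(\psi),\dots,\nabla f_c(\psi)$ are linearly dependent in $\C^{n+1}$.

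Writing $L=c_1(\ko_X(1))$, these sections assemble into a morphism of vector bundles on $X$
\[
\phi\colon\ke\coloneqq\ko_X(-(\omega-1))\oplus\ko_X(-1)\oplus\bigoplus_{i=1}^{c}\ko_X(-(\delta_i-1))\ \longrightarrow\ \ko_X^{\,n+1}
\]
given by $\nabla f$, the tautological Euler section, and $\nabla f_1,\dots,\nabla f_c$. The degeneracy locus $D\coloneqq D_{c+1}(\phi)=\{x\in X:\rank\phi_x\le c+1\}$ has expected codimension $(c+2-(c+1))((n+1)-(c+1))=n-c=\dim X$, and by the previous paragraph it agrees with $\mathrm{eig}(f,X)$ off the fixed proper closed subset $\mathrm{Sing}(X)\cup\mathrm{Sing}(Y)\cup\{q=0\}$ of $X$; in particular, for generic $f$, the $X$-eigenpoints are isolated points of $D$. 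Now apply the refined Thom--Porteous formula \cite[Ch.~14]{fulton1998intersection}: since $\rank\ke-(c+1)=1$, the degeneracy class is the single Chern class $c_{n-c}\bigl(\ko_X^{\,n+1}-\ke\bigr)=c_{n-c}\!\left(c(\ke)^{-1}\right)$, represented by an effective $0$-cycle supported on $D$, whose degree is therefore at least $\#\mathrm{eig}(f,X)$. From
\[
c(\ke)^{-1}=\frac{1}{(1-(\omega-1)L)(1-L)\prod_{i=1}^{c}(1-(\delta_i-1)L)}
\]
one reads off $c_{n-c}\bigl(c(\ke)^{-1}\bigr)=\gamma\,L^{n-c}$, with $\gamma=\sum_{\ell+j_0+i_1+\cdots+i_c=n-c}(\omega-1)^{\ell}\prod_{k=1}^{c}(\delta_k-1)^{i_k}\ge 0$ the coefficient of $t^{n-c}$ in that rational function. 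Using $\int_X L^{n-c}=\deg X\le\delta_1\cdots\delta_c$ and grouping the sum by $i_0\coloneqq\ell+j_0$ gives
\[
\RRdeg_\omega(X)=\#\mathrm{eig}(f,X)\ \le\ \gamma\cdot\deg X\ \le\ \delta_1\cdots\delta_c\!\!\sum_{\substack{i_0+\cdots+i_c=n-c}}\Bigl(\sum_{\ell=0}^{i_0}(\omega-1)^{\ell}\Bigr)\prod_{k=1}^{c}(\delta_k-1)^{i_k},
\]
which is the asserted bound \eqref{eq: RR degree complete intersection}.

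For the equality statement, assume $X$ is a generic complete intersection, so $X=Y$ is smooth and $\deg X=\delta_1\cdots\delta_c$: the second inequality above is then an equality. Moreover the locus $B\subseteq X$ where the Euler vector field $\psi$ is $q$-orthogonal to $T_\psi C(X)$ lies in $X\cap\{q=0\}$ and has expected codimension $n+1-c$ in $X$, hence is empty for generic $X$; together with the genericity of $f$ this forces $D$ to equal $\mathrm{eig}(f,X)$, to be of pure codimension $n-c$, and to be reduced. Then $\gamma\cdot\delta_1\cdots\delta_c=\int_X c_{n-c}\bigl(c(\ke)^{-1}\bigr)=\deg D=\#\mathrm{eig}(f,X)=\RRdeg_\omega(X)$, so the first inequality is an equality as well.

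The main obstacle is the genericity bookkeeping behind the two previous paragraphs: showing that for generic $f$ the degeneracy locus $D$ meets each bad locus---$\mathrm{Sing}(X)$, $\mathrm{Sing}(Y)$, the quadric $\{q=0\}$, and the locus $B$ where the Euler field becomes normal---only in dimension zero; that the $X$-eigenpoints are isolated and reduced points of $D$; and, in the non-generic case, that the refined Thom--Porteous class is genuinely an effective cycle supported on $D$ even when $X$ is singular or fails to be equidimensional of top degree. Granting these, the remaining work is the elementary power-series identity above.
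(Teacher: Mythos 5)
Your proposal is correct in its main arithmetic and follows essentially the same strategy as the paper: realize $X$-eigenpoints as a Porteous degeneracy locus of the $(c+2)\times(n+1)$ matrix of gradients and compute the top Chern class of the virtual bundle. The only cosmetic difference is that you work with the dual morphism $\phi\colon\kf^*\to\ko_X^{n+1}$ (which neatly absorbs the sign $(-1)^{n-c}$), while the paper takes $\varphi\colon\ko_X^{n+1}\to\kf$; the resulting coefficient $\gamma$ of $L^{n-c}$ agrees in both presentations.

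Where you genuinely diverge is in how the inequality for a possibly singular $X$ is deduced. The paper first observes $\RRdeg_\omega(X)\le\RRdeg_\omega(X')$ for the complete intersection $X'=\V(f_1,\dots,f_c)$ containing $X$ as a component, and then invokes upper semicontinuity of the RR degree under deformation of $X'$ to a generic complete intersection; after that reduction it only needs Porteous in the generic (0-dimensional, reduced) situation. You instead stay on the possibly singular $X$ and argue that the refined Thom--Porteous class $\mathbb{D}\in A_0(D)$ is an effective $0$-cycle whose degree dominates the number of isolated eigenpoints. This is a defensible alternative, but as you note yourself, the effectivity of $\mathbb{D}$ in the presence of excess-dimensional components of $D$ is not automatic; one would need a positivity input (e.g., global generation of $\mathrm{Hom}(\ke,\ko_X^{n+1})$, which holds here since all summands $\ko_X(\omega-1),\ko_X(1),\ko_X(\delta_i-1)$ are globally generated) or a conservation-of-number degeneration argument to make this step airtight. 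Modulo that, and modulo the genericity bookkeeping you flag honestly in the last paragraph, your argument reaches the same formula by the same Porteous computation.
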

\begin{proof}
By assumption, the variety $X'$ cut out by the polynomials $f_1,\dots,f_c$ is a complete intersection. Then, $X$ is an irreducible component of $X'$ and so, $\RRdeg_{\omega}(X)\leq\RRdeg_{\omega}(X')$. Since the $\RRdeg_{\omega}(X')$ is bounded from above by the Rayleigh-Ritz degree of a generic complete intersection with parameters $d_1,\dots, d_c$, to prove the theorem, it suffices to study the case of generic complete intersections.

Consider a generic homogeneous polynomial $f$ of degree $\omega$, and let $f_1(x),\ldots,f_c(x)$ be the homogeneous polynomials of degrees $\delta_1,\ldots,\delta_c$ that define the complete intersection $X$. We study the (complex) critical points of the polynomial function $f\colon S_q\to\C$ constrained to the cone $C(X)$. Without loss of generality, we assume that $q(x)=x_0^2+\cdots+x_n^2$. In particular, every critical point $\psi$ is such that
\begin{equation}\label{eq: matrix rank c + 1}
\rank
\begin{pmatrix}
    \psi_0 & \cdots & \psi_n \\[2pt]
    \frac{\partial f}{\partial x_0}(\psi) & \cdots & \frac{\partial f}{\partial x_n}(\psi) \\[2pt]
    \frac{\partial f_1}{\partial x_0}(\psi) & \cdots & \frac{\partial f_1}{\partial x_n}(\psi) \\[2pt]
    \vdots&\ddots&\vdots \\[2pt]
    \frac{\partial f_c}{\partial x_0}(\psi) & \cdots & \frac{\partial f_c}{\partial x_n}(\psi)
\end{pmatrix}
\le c+1\,.
\end{equation}
The rows of the $(c+2)\times(n+1)$ matrix above are vectors of homogeneous polynomials of degrees $1$, $\omega-1$, $\delta_1-1$,\ldots, $\delta_c-1$, respectively. Equivalently, they define elements of the cohomology groups $H^0(\PP^n,\ko_{\PP^n}(1))$, $H^0(\PP^n,\ko_{\PP^n}(\omega-1))$, $H^0(\PP^n,\ko_{\PP^n}(\delta_1-1))$,\ldots, $H^0(\PP^n,\ko_{\PP^n}(\delta_c-1))$. After pulling back via the embedding $X\hookrightarrow\PP^n$, we can say that the matrix in \eqref{eq: matrix rank c + 1} represents the fiber $\varphi_{[\psi]}$ of a vector bundle morphism $\varphi\colon \ke\to \kf$, where
\[
\ke\coloneqq\ko_X^{\oplus(n+1)}\,,\quad \kf\coloneqq \ko_X(1)\oplus\ko_X(\omega-1)\oplus\bigoplus_{i=1}^c\ko_X(\delta_i-1)\,.
\]
The rank condition suggests that $[\psi]$ lives in the degeneracy locus $D_{c+1}(\varphi)\coloneqq\{[\psi]\in X\mid \rank\varphi_{[\psi]}\le c+1\}$. The expected dimension of $D_{c+1}(\varphi)$ is $\dim X -[(c+2)-(c+1)]\cdot[n+1-(c+1)]=n-c-(n-c)=0$. The dimension of $D_{c+1}(\varphi)$ is $0$ thanks to the genericity of the polynomials $f$ and $f_1,\ldots,f_c$. Hence, we conclude that
\begin{equation}\label{eq: identity RR degree complete intersection}
\RRdeg_\omega(X) = \int_X [D_{c+1}(\varphi)] = (-1)^{n-c}\int_X c_{n-c}(\ke-\kf)\,,
\end{equation}
where in the last identity we applied Porteous' formula \cite[II, \S4]{arbarello1985geometry}.
The total Chern class $c(\ke-\kf)$ is equal to
\begin{align*}
   c(\ke-\kf) &= \frac{c(\ke)}{c(\kf)} = \frac{1}{c(\kf)} = \frac{1}{(1+L)(1+(\omega-1)L)\prod_{k=1}^c(1+(\delta_k-1)L)}\\
   &= \left(\sum_{j=0}^{\infty}(-1)^jL^j\right)\cdot\left(\sum_{\ell=0}^{\infty}(-1)^\ell(\omega-1)^\ell L^\ell\right)\cdot\prod_{k=1}^c\left(\sum_{i_k=0}^{\infty}(-1)^{i_k}(\delta_k-1)^{i_k}L^{i_k}\right)\\
   &= \sum_{i_0=0}^{\infty}(-1)^{i_0}\sum_{\ell=0}^{i_0}(\omega-1)^\ell L^{i_0} \cdot\prod_{k=1}^c\left(\sum_{i_k=0}^{\infty}(-1)^{i_k}(\delta_k-1)^{i_k}L^{i_k}\right)\,.
\end{align*}
Therefore, we obtain the identity
\[
    c_{n-c}(\ke-\kf) =(-1)^{n-c}\left[\sum_{\substack{i_0+\cdots+i_c=n-c\\i_0,\dots,i_c\geq 0}}\sum_{\ell=0}^{i_0}(\omega-1)^\ell\cdot\prod_{k=1}^c(\delta_k-1)^{i_k}\right]L^{n-c}\,.
\]
Plugging the previous identity in \eqref{eq: identity RR degree complete intersection} and knowing that $\int_X L^{n-c}=\deg(X)=\delta_1\cdots\delta_c$, we obtain the desired degree in \eqref{eq: RR degree complete intersection}.
\end{proof}

\begin{remark}
Let $X\subseteq\PP^n$ be a variety of codimension $c$ cut out by the polynomials $f_1,\dots,f_m$ of degrees $\delta_1\geq\dots\geq\delta_m$. Then, the previous result implies that
\begin{equation*}
    \RRdeg_\omega(X)\leq  \delta_1\cdots\delta_c \sum_{\substack{i_0+\cdots+i_c=n-c\\i_0,\dots,i_c\geq 0}}\sum_{\ell=0}^{i_0}(\omega-1)^\ell\cdot\prod_{k=1}^c(\delta_k-1)^{i_k}\,,
\end{equation*}
for all $\omega\ge 1$. Clearly, this inequality is weaker than Equation~\eqref{eq: RR degree complete intersection}, but it can be applied more easily since it does not require finding a regular sequence of length $c$ among the generators of $X$.    
\end{remark}

It is interesting to compare the value of $\RRdeg_\omega(X)$ in Theorem \ref{thm: RR degree complete intersection} and the generic distance degree $\mathrm{gDD}(X)$, which is studied in \cite[Proposition 2.6]{draisma2016euclidean}.

\begin{corollary}\label{corol: BW degree hypersurface}
Let $X\subseteq\PP^n$ be a generic hypersurface of degree $\delta$. Then for all $\omega\ge 1$
\[
    \RRdeg_\omega(X) = \delta\sum_{i=0}^{n-1}\sum_{\ell=0}^i(\omega-1)^\ell\cdot(\delta-1)^{n-1-i}\,.
\]
\end{corollary}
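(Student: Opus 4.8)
The plan is to read off this formula as the codimension-one case of Theorem \ref{thm: RR degree complete intersection}. A generic hypersurface $X = \V(f_1) \subseteq \PP^n$ of degree $\delta$ has codimension $c = 1$ and is cut out by a single non-constant polynomial $f_1$, which trivially forms a regular sequence of length $1$; moreover a generic hypersurface is a generic complete intersection, so the equality case of \eqref{eq: RR degree complete intersection} applies. Hence it only remains to substitute $c = 1$ and $\delta_1 = \delta$ into that formula and to reindex the sum.

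With $c = 1$ the leading factor $\delta_1 \cdots \delta_c$ becomes $\delta$, the outer multi-index sum collapses to a sum over pairs $(i_0, i_1)$ of nonnegative integers with $i_0 + i_1 = n-1$, and $\prod_{k=1}^c (\delta_k - 1)^{i_k}$ becomes $(\delta - 1)^{i_1}$. Setting $i \coloneqq i_0$, so that $i_1 = n - 1 - i$ and $i$ ranges over $\{0, 1, \dots, n-1\}$, formula \eqref{eq: RR degree complete intersection} becomes
\[
\RRdeg_\omega(X) = \delta \sum_{i=0}^{n-1} \sum_{\ell=0}^{i} (\omega - 1)^\ell \cdot (\delta - 1)^{n-1-i}\,,
\]
which is exactly the claim.

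I expect no genuine obstacle here: the entire content already sits in Theorem \ref{thm: RR degree complete intersection}, and the corollary amounts to a substitution together with the trivial bijection $(i_0, i_1) \mapsto i_0$ between $\{(i_0,i_1) : i_0 + i_1 = n-1,\ i_0, i_1 \geq 0\}$ and $\{0, \dots, n-1\}$. If one prefers a self-contained derivation, one can simply unwind the proof of Theorem \ref{thm: RR degree complete intersection} in this special case: run the Porteous computation \eqref{eq: identity RR degree complete intersection} on the hypersurface $X$ with $\kf = \ko_X(1) \oplus \ko_X(\omega - 1) \oplus \ko_X(\delta - 1)$, use $c(\ke - \kf) = 1/c(\kf)$ to extract the coefficient of $L^{n-1}$, and multiply by $\int_X L^{n-1} = \deg X = \delta$; this reproduces the same double sum.
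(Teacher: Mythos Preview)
Your proposal is correct and matches the paper's approach: the corollary is stated without proof immediately after Theorem~\ref{thm: RR degree complete intersection}, so it is intended as the direct $c=1$ specialization you carry out. The substitution and reindexing are exactly as you describe.
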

\begin{example}
Consider the plane curve $X\subset\PP^2$ cut out by the Fermat cubic $f_1=x_0^3+x_1^3+x_2^3$, and let $q(x)=x_0^2+x_1^2+x_2^2$ be the standard Euclidean quadratic form.
By Corollary~\ref{corol: BW degree hypersurface}, we expect to have $\RRdeg_2(X)=12$. In fact, given a generic $f\in\C[x_0,x_1,z_2]_2$, the $X$-eigenpoints of $f$ are the points in $X$ that are also solutions of
\[
    h(x_0,x_1,x_2) \coloneqq \det
    \begin{pmatrix}
        x_0 & x_1 & x_2 \\[2pt]
        \frac{\partial f}{\partial x_0} & \frac{\partial f}{\partial x_1} & \frac{\partial f}{\partial x_2} \\[2pt]
        x_0^2 & x_1^2 & x_2^2
    \end{pmatrix}
    =0\,.
\]
Therefore, if the curves defined respectively by $h$ and $f$ are in general position, by B\'ezout's theorem the number of $X$-eigenpoints of $f$ is $3\cdot 4=12$. For instance, let $f=x_0^2+10\,x_1^2+2\,x_1x_2+8\,x_2^2$. The $X$-eigenpoints of $f$ are the solutions in $\PP^2$ of the system
\[
    \begin{cases}
        f_1 = x_0^3+x_1^3+x_2^3=0\\
        h = x_0(x_0x_1^2-x_1^3-2\,x_0x_1x_2-7\,x_1^2x_2-x_0x_2^2+9\,x_1x_2^2+x_2^3)=0\,.
    \end{cases}
\]
For $x_0=0$, we find three solutions $[0:1:-\xi],[0:1:-\xi^2],[0:1:-1]$ where $\xi$ is a cubic root of $1$. For $x_0=1$, we find nine distinct solutions, of which three are real and six are complex. So, in total, we have twelve $X$-eigenpoints as predicted.

For each of the four real $X$-eigenpoints of $f$, we compute the associated $X$-eigenvalue by evaluating the quadratic form $f$ at the corresponding affine point on the sphere. Among these points, the largest $X$-eigenvalue is associated with the normalized $X$-eigenvector $\widetilde{\psi}=(0,\sqrt{2}/2,-\sqrt{2}/2)$ and is equal to $\widetilde{\lambda}=f(\widetilde{\psi})=8$.
By Proposition~\ref{prop: equivalence}, the closest point to $f$ on $C(\nu_2(X^\mR))$ with respect to the Bombieri-Weyl inner product is $\widetilde{\lambda}(\widetilde{\psi}^*)^2=4(x_1-x_2)^2$. Using \eqref{eq: BW inner product standard}, one verifies that $\dist_{q_\mBW,f}^2(\widetilde{\lambda}(\widetilde{\psi}^*)^2)=103$.\hfill$\diamondsuit$
\end{example}

Several varieties are represented as images of polynomial maps rather than implicitly. One can directly use the parametrization of $X$ to study its $X$-eigenpoints.

\begin{lemma}\label{lem: polynomial map}
Consider an injective polynomial map
\[
g\colon\PP^m\dashrightarrow\PP^n\,,\quad[t]\mapsto[g_0(t):\cdots:g_n(t)]
\]
for some homogeneous polynomials $g_0,\ldots,g_n$ of the same degree. Let $B$ be the base locus of $g$ and define $X\coloneqq\overline{g(\PP^m\setminus B)}\subseteq\PP^n$. Let $q$ be the quadratic form associated with the fixed isotropic quadric. Consider a generic polynomial function $f\colon\C^{n+1}\to\C$ and the $2\times(m+1)$ matrix
\begin{equation}\label{eq: matrix A}
A(w) = 
\begin{pmatrix}
    \nabla_t(q\circ g)(w)\\
    \nabla_t(f\circ g)(w)
\end{pmatrix}
=
\begin{pmatrix}
    \frac{\partial(q\circ g)}{\partial t_0}(w) & \cdots & \frac{\partial(q\circ g)}{\partial t_m}(w) \\[2pt]
    \frac{\partial(f\circ g)}{\partial t_0}(w) & \cdots & \frac{\partial(f\circ g)}{\partial t_m}(w)
\end{pmatrix}\,,\quad w\in\C^{m+1}\,.
\end{equation}
Then $g$ defines a bijection between $\{[w]\in\PP^m\setminus B\mid\text{$\rank Jg(w)=m+1$ and $\rank A(w)\le 1$}\}$ and $\{[\psi]\in\PP^n \mid\text{$[\psi]$ is an $X$-eigenpoint of $f$}\}$, where $Jg$ the jacobian matrix of $g$.
\end{lemma}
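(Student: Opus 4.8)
The plan is to unwind both sides of the claimed bijection to the same chain-rule identity. First I would fix the setup: let $[w]\in\PP^m\setminus B$ be such that $\rank Jg(w)=m+1$, so that $[\psi]\coloneqq g([w])$ is a nonsingular point of $X$ at which $g$ is a local isomorphism, and the tangent space $T_{[\psi]}X$ is spanned by the columns of $Jg(w)$ — equivalently $T_\psi C(X) = \operatorname{im} Jg(w)$ (working with affine cones, using that $g$ is homogeneous so $\psi = g(w)$ lies in this image by Euler). By Definition~\ref{def: RR degree} and the argument in the proof of Proposition~\ref{prop: equivalence}, $[\psi]$ is an $X$-eigenpoint of $f$ if and only if some linear combination of $\nabla f(\psi)$ and $\psi = \tfrac12\nabla q(\psi)$ lies in $N_\psi C(X)$, i.e. is orthogonal to $\operatorname{im} Jg(w)$. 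I would phrase this as: the $2\times(n+1)$ matrix $\begin{pmatrix}\nabla(q)(\psi)\\ \nabla(f)(\psi)\end{pmatrix}$, multiplied on the right by $Jg(w)$, has rank $\le 1$.

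The key step is then the chain rule: $\nabla_t(q\circ g)(w) = \nabla q(g(w))\cdot Jg(w)$ and $\nabla_t(f\circ g)(w) = \nabla f(g(w))\cdot Jg(w)$, so that
\[
A(w) = \begin{pmatrix}\nabla q(\psi)\\ \nabla f(\psi)\end{pmatrix} Jg(w)\,.
\]
Since $Jg(w)$ has full column rank $m+1$ (that is its number of columns), right-multiplication by it is injective on row spaces, hence $\rank A(w) = \rank\!\begin{pmatrix}\nabla q(\psi)\\ \nabla f(\psi)\end{pmatrix} Jg(w)$ equals the rank of that $2\times(n+1)$ matrix after restriction — but more precisely, $\rank A(w)\le 1$ iff the two rows $\nabla q(\psi)Jg(w)$, $\nabla f(\psi)Jg(w)$ are linearly dependent iff some nonzero linear combination $a\,\nabla q(\psi) + b\,\nabla f(\psi)$ annihilates $\operatorname{im}Jg(w) = T_\psi C(X)$, i.e. lies in $N_\psi C(X)$. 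This is exactly the eigenpoint condition. So $\rank Jg(w) = m+1$ and $\rank A(w)\le 1$ together are equivalent to $[\psi]=g([w])$ being an $X$-eigenpoint of $f$.

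It remains to check that $g$ actually induces a bijection between the two point sets, not merely a logical equivalence of conditions. Injectivity of $[w]\mapsto g([w])$ on $\PP^m\setminus B$ is given by hypothesis. For surjectivity onto the set of $X$-eigenpoints: every $X$-eigenpoint $[\psi]$ is by Definition~\ref{def: RR degree} a nonsingular point of $X$ (critical points live in $\mathrm{Crit}$, which by definition consists of nonsingular points); since $X = \overline{g(\PP^m\setminus B)}$ and $g$ is injective, a generic — and in fact every nonsingular — point of $X$ in the image has a preimage $[w]$ with $\rank Jg(w)=m+1$. One small subtlety to address is whether $[\psi]$ could be a nonsingular point of $X$ that nonetheless lies outside $g(\PP^m\setminus B)$ or has only preimages where $Jg$ drops rank; here I would invoke genericity of $f$ (as in the hypotheses of Lemma~\ref{lem: polynomial map} and the well-definedness via Corollary~\ref{corol: RR degree equals BW degree}) to ensure all $X$-eigenpoints avoid the closed proper subset where $g$ fails to be a local isomorphism or fails to be defined. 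The main obstacle is precisely this bookkeeping at the boundary — making sure the finitely many $X$-eigenpoints of a generic $f$ all fall in the locus where $g$ is a bijective local isomorphism — rather than the chain-rule computation, which is routine.
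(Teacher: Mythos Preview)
Your argument is correct and follows essentially the same route as the paper: both reduce the rank condition on $A(w)$ via the chain rule to a linear combination of $\nabla q(\psi)$ and $\nabla f(\psi)$ lying in the left kernel of $Jg(w)$, i.e.\ in $N_\psi C(X)$, and then invoke injectivity of $g$. If anything you are more careful than the paper about the surjectivity direction of the bijection (the paper simply asserts that injectivity of $g$ ``gives the desired bijective correspondence'' without addressing whether an $X$-eigenpoint could lie outside the image of the full-rank locus), and your appeal to genericity of $f$ there is the right fix.
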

\begin{proof}
Let $[w]=[w_0:\cdots:w_m]\in\PP^m$ such that $\rank A(w)\le 1$, namely there exist $[\lambda,\mu]\in\PP^1$ sucht that $\lambda_t\nabla(q\circ g)(w)+\mu\nabla_t(f\circ g)(w)=0$. Applying the chain rule, this is equivalent to having $\lambda\nabla_xq(g(w))+\mu\nabla_xf(g(w))$ in the left kernel of $Jg(w)$. Since $\rank Jg(w)=m+1$ by assumption, this implies that $\psi=g(w)\in\mathrm{Crit}(f,S_q\cap C(X))$, that is, $[\psi]$ is an $X$-eigenvector of $f$. The injectivity of the map $g$ gives the desired bijective correspondence.
\end{proof}

Using the previous lemma and Porteous' formula, we compute the RR degree of the image of a generic projective morphism.

\begin{proposition}\label{prop: RR degree generic morphism}
Assume $n>m$. Let $g_0,\ldots,g_n$ be $n+1$ generic homogeneous polynomials of degree $d$ in $m+1$ variables, and assume that the image $X$ of the morphism $g\colon\PP^m\to\PP^n$ associated with the polynomials $g_i$ is nonsingular. Then
\begin{equation}
    \RRdeg_\omega(X) =
    \begin{cases}
    (m+1)(2d-1)^m & \text{if $\omega=2$}\\
    \frac{(\omega d-1)^{m+1}-(2d-1)^{m+1}}{d(\omega-2)} & \text{if $\omega>2$.}
    \end{cases}
\end{equation}
\end{proposition}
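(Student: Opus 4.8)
The plan is to run the same Porteous-type argument as in Theorem~\ref{thm: RR degree complete intersection}, but on the parameter space $\PP^m$ rather than on $X$ itself, using Lemma~\ref{lem: polynomial map} to transport the count. By genericity of the $g_i$ the base locus $B$ is empty (for $n>m$ a generic system of $n+1$ forms of degree $d$ has no common zero on $\PP^m$), $g$ is a morphism, and one checks $\rank Jg(w)=m+1$ everywhere since $X$ is assumed nonsingular and $g$ is generically injective (indeed birational onto $X$). So by Lemma~\ref{lem: polynomial map} the $X$-eigenpoints of a generic $f$ are in bijection with $\{[w]\in\PP^m\mid\rank A(w)\le 1\}$, where $A(w)$ is the $2\times(m+1)$ matrix whose rows are $\nabla_t(q\circ g)(w)$ and $\nabla_t(f\circ g)(w)$, vectors of forms of degrees $2d-1$ and $\omega d-1$ respectively.

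Next I interpret this rank condition as a degeneracy locus. The rows of $A$ define a vector bundle morphism $\varphi\colon\ke\to\kf$ on $\PP^m$ with $\ke=\ko_{\PP^m}^{\oplus(m+1)}$ and $\kf=\ko_{\PP^m}(2d-1)\oplus\ko_{\PP^m}(\omega d-1)$; the locus $\{\rank\varphi\le 1\}=D_1(\varphi)$ has expected dimension $m-(2-1)(m+1-1)=0$, and by genericity of $f$ (and of the $g_i$) it is indeed zero-dimensional and reduced. Porteous' formula then gives
\[
\RRdeg_\omega(X)=\int_{\PP^m}[D_1(\varphi)]=(-1)^m\int_{\PP^m}c_m(\ke-\kf)=(-1)^m\left[c\!\left(\ko_{\PP^m}(2d-1)\oplus\ko_{\PP^m}(\omega d-1)\right)^{-1}\right]_m,
\]
where $[\,\cdot\,]_m$ denotes the degree-$m$ part and we use $\int_{\PP^m}h^m=1$ for $h$ the hyperplane class. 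Expanding,
\[
c(\ke-\kf)=\frac{1}{(1+(2d-1)h)(1+(\omega d-1)h)}=\left(\sum_{i\ge0}(-(2d-1))^ih^i\right)\left(\sum_{j\ge0}(-(\omega d-1))^jh^j\right),
\]
so the degree-$m$ coefficient of $c(\ke-\kf)$ is $(-1)^m\sum_{i+j=m}(2d-1)^i(\omega d-1)^j$, and hence $\RRdeg_\omega(X)=\sum_{i+j=m}(2d-1)^i(\omega d-1)^j$.

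It remains to put this sum into closed form, splitting on $\omega$. When $\omega=2$ the two bases coincide, $2d-1=\omega d-1$, so the geometric-type sum has $m+1$ equal terms and collapses to $(m+1)(2d-1)^m$. When $\omega>2$ the two bases differ and the finite geometric sum telescopes:
\[
\sum_{i+j=m}(2d-1)^i(\omega d-1)^j=\frac{(\omega d-1)^{m+1}-(2d-1)^{m+1}}{(\omega d-1)-(2d-1)}=\frac{(\omega d-1)^{m+1}-(2d-1)^{m+1}}{d(\omega-2)},
\]
which is exactly the claimed formula. The only genuinely nontrivial points are the dimension/transversality verifications needed to apply Porteous cleanly — that $B=\emptyset$, that $\rank Jg=m+1$ everywhere (equivalently that the generic $g$ is an immersion onto the nonsingular $X$, so the hypotheses of Lemma~\ref{lem: polynomial map} are genuinely met), and that $D_1(\varphi)$ has the expected dimension $0$ and is reduced for generic $f$; I expect the last of these, a Bertini-type genericity argument on the pencil spanned by $\nabla_t(q\circ g)$ and $\nabla_t(f\circ g)$, to be the main obstacle, and I would argue it by noting that varying $f$ moves the second row of $A$ through a base-point-free enough linear system to make the degeneracy locus behave generically.
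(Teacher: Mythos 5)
Your proof is correct and follows exactly the same route as the paper: transport the eigenpoint count to $\PP^m$ via Lemma~\ref{lem: polynomial map}, realize the rank condition as the degeneracy locus $D_1(\varphi)$ of a morphism $\ko_{\PP^m}^{\oplus(m+1)}\to\ko_{\PP^m}(2d-1)\oplus\ko_{\PP^m}(\omega d-1)$, apply Porteous' formula to get $(-1)^m\int_{\PP^m}c_m(\ke-\kf)=\sum_{i+j=m}(2d-1)^i(\omega d-1)^j$, and evaluate the geometric sum, splitting on $\omega=2$ versus $\omega>2$. The paper is no more detailed than you are on the Bertini-type verification that $D_1(\varphi)$ has the expected dimension for generic $f$ and $g$, so your flagging of that point as the only nontrivial hypothesis is apt; there is nothing further to add.
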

\begin{proof}
The genericity of the polynomials $g_i$ ensures that $g$ is defined everywhere, that is, its base locus is empty.
Consider a generic homogeneous polynomial function $f\colon\C^{n+1}\to\C$. By Lemma \ref{lem: polynomial map}, $\RRdeg_\omega(X)$ coincides with the cardinality of the set $\{[w]\in\PP^m\mid\rank A(w)\le 1\}$, where $A(w)$ is defined in \eqref{eq: matrix A}. The two rows of $A(w)$ have homogeneous polynomial entries of degrees $2d-1$ and $\omega d-1$, respectively. Therefore $A(w)$ defines one fiber of the vector bundle morphism $\varphi\colon\ke\to\kf$, where $\ke\coloneqq\ko_{\PP^m}^{\oplus(m+1)}$ and $\kf\coloneqq \ko_{\PP^m}(2d-1)\oplus\ko_{\PP^m}(\omega d-1)$. Due to the general position of $X$, the dimension of the degeneracy locus $D_1(\varphi)$ coincides with the expected one $m-(2-1)m=m-m=0$. We conclude that
\begin{equation}\label{eq: identity RR degree generic morphism}
\RRdeg_\omega(X) = \int_{\PP^m} [D_1(\varphi)] = (-1)^m \int_{\PP^m} c_m(\ke-\kf)\,,
\end{equation}
where in the last equality we applied Porteous' formula.
Denoting $h=c_1(\ko_{\PP^m}(1))$, the total Chern class of $\ke-\kf$ is
\[
c(\ke-\kf) = \frac{1}{(1+(2d-1)h)(1+(\omega d-1)h)} = \sum_{i,j=0}^{\infty}(-1)^{i+j}(2d-1)^i(\omega d-1)^jh^{i+j}\,,
\]
therefore $c_m(\ke-\kf)=(-1)^m h^m\sum_{i=0}^m(2d-1)^{m-i}(\omega d-1)^i$. Plugging this identity in \eqref{eq: identity RR degree generic morphism} and using $\int_{\PP^m}h^m = 1$, we obtain the formula in the statement.
\end{proof}

\begin{remark}
Suppose that $n=\binom{m+d}{d}-1$, and consider $n+1$ generic polynomials $g_0,\ldots,g_n$ in $\C[t]_d$. Let $X$ be the image of the morphism $g\colon\PP^m\to\PP^n$. In particular, $X$ is equal to the image of the Veronese embedding $\nu_d\colon\PP^m\hookrightarrow\PP^n$ after a linear transformation. In this case,
it is interesting to compare the degree in Proposition \ref{prop: RR degree generic morphism} with the generic distance degree $\mathrm{gDD}(\nu_\omega(X))$, computed applying \cite[Proposition 7.10]{draisma2016euclidean}:
\[
\mathrm{gDD}(\nu_\omega(X)) = \mathrm{gDD}(\nu_{\omega\nu}(\PP^m)) = \frac{(2\omega d-1)^{m+1}-(\omega d-1)^{m+1}}{\omega d}\,.
\]
\end{remark}

\section{Rayleigh-Ritz degrees of varieties in general position}\label{sec: RR degrees general position}

We now discuss a general formula for the RR degree of a variety $X\subseteq\PP^n$ in general position with respect to the isotropic quadric $Q\subseteq\PP^n$. If we denote by $\widetilde{X}$ any linear transformation of $X$, not necessarily generic, we always have the chain of inequalities
\[
\RRdeg_\omega(\widetilde{X}) \le \RRdeg_\omega(X) \le \mathrm{gDD}(\nu_\omega(X))\,.
\]
This is particularly interesting in the case of varieties of rank-one tensors, which we discuss next. The invariant $\mathrm{gDD}(\nu_{\omega}(X))$ is called {\em generic distance degree} of $\nu_\omega(X)$, and corresponds to the choice of an isotropic quadric in $\PP(\C[x]_\omega)$ which intersects $\nu_\omega(X)$ transversally, see \cite{draisma2016euclidean}. The latter satisfies the following formula.

\begin{theorem}\label{thm: RR degree X nonsingular transversal to Q}
Let $X\subseteq\PP^n$ be a nonsingular variety of dimension $m$, in general position with respect to the fixed isotropic quadric $Q\subseteq\PP^n$. The RR degree of $X$ is
\begin{equation}\label{eq: RR degree X nonsingular transversal to Q}
    \RRdeg_\omega(X) = \mathrm{gDD}(\nu_\omega(X))-(\omega-1)\mathrm{gDD}(\nu_\omega(X\cap Q))\,.
\end{equation}
Furthermore, if the embedding of $X$ in $\PP^n$ is given by a line bundle $\kl$ with $L=c_1(\kl)$, then
\begin{align}\label{eq: RR degree X nonsingular given line bundle}
\begin{split}
\RRdeg_\omega(X) &= \sum_{i=0}^m(-1)^i\left(\sum_{j=0}^{m-i}\omega^{m-i-j}2^j\right)\int_X c_i\cdot L^{m-i}\\
&=
\begin{cases}
\sum_{i=0}^m(-1)^i(m+1-i)2^{m-i}\int_X c_i\cdot L^{m-i} & \text{if $\omega = 2$}\\[2pt]
\sum_{i=0}^m(-1)^i\frac{\omega^{m+1-i}-2^{m+1-i}}{\omega-2}\int_X c_i\cdot L^{m-i} & \text{if $\omega \neq 2$.}
\end{cases}
\end{split}
\end{align}
\end{theorem}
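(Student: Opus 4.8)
The plan is to deduce the first identity \eqref{eq: RR degree X nonsingular transversal to Q} from Corollary~\ref{corol: RR degree equals BW degree} together with a transversality/excess-intersection analysis, and then to turn it into the closed form \eqref{eq: RR degree X nonsingular given line bundle} by a direct Chern-class computation. By Corollary~\ref{corol: RR degree equals BW degree} we have $\RRdeg_\omega(X)=\mathrm{DD}(\nu_\omega(X),Q_\mBW)$, so everything reduces to computing the Bombieri–Weyl distance degree of the Veronese re-embedding $\nu_\omega(X)\subseteq\PP(\C[x]_\omega)$. The point is that, while $\nu_\omega(X)$ is in general position with respect to a \emph{generic} quadric (giving $\mathrm{gDD}(\nu_\omega(X))$), it is \emph{not} in general position with respect to the special quadric $Q_\mBW$: indeed $Q_\mBW$ is tangent to $\nu_\omega(X)$ precisely along $\nu_\omega(X\cap Q)$, since $q_\mBW((\psi^*)^\omega)=q(\psi)^\omega$ vanishes to order $\omega$ on the cone over $X\cap Q$. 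The first step, then, is to show that the deficiency $\mathrm{gDD}(\nu_\omega(X))-\mathrm{DD}(\nu_\omega(X),Q_\mBW)$ equals $(\omega-1)\,\mathrm{gDD}(\nu_\omega(X\cap Q))$: one tracks how critical points of $\dist^2_{q_\mBW,u}$ that would generically be spread out get absorbed along the singular locus of the section $u-x\perp T_x$, using the standard ED-degree formula in terms of the conormal variety and its class in $\PP(V)\times\PP(V^*)$, or equivalently the projective-duality/biduality description in \cite{draisma2016euclidean}. The multiplicity $(\omega-1)$ is exactly the local intersection contribution coming from the order-$\omega$ contact of $Q_\mBW$ with $\nu_\omega(X)$ along $\nu_\omega(X\cap Q)$, and $X\cap Q$ is itself nonsingular of dimension $m-1$ and in general position by the transversality hypothesis.

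For the second identity, the plan is to compute both $\mathrm{gDD}(\nu_\omega(X))$ and $\mathrm{gDD}(\nu_\omega(X\cap Q))$ via the ED-degree formula for the Veronese re-embedding. Since $\nu_\omega$ pulls back $\ko(1)$ to $\kl^{\otimes\omega}$, the hyperplane class on $\nu_\omega(X)$ is $\omega L$, while the Chern classes $c_i=c_i(X)$ are unchanged. Using the formula (see \cite[Theorem~5.8]{draisma2016euclidean} or the survey in \cite{dirocco2025osculating}) that expresses the generic ED degree of a smooth variety $Y$ with hyperplane class $H$ as $\sum_{i=0}^{\dim Y}(-1)^i\bigl(\sum_{j=0}^{\dim Y-i} 2^{\,\dim Y - i - j}\bigr)\int_Y c_i(Y)\,H^{\dim Y-i}$ — equivalently $\int_Y \frac{c(Y^\vee\text{-type expression})}{\cdots}$ — one gets
\[
\mathrm{gDD}(\nu_\omega(X)) = \sum_{i=0}^m(-1)^i\Bigl(\sum_{j=0}^{m-i}2^{\,m-i-j}\Bigr)\int_X c_i\cdot(\omega L)^{m-i}
= \sum_{i=0}^m(-1)^i\Bigl(\sum_{j=0}^{m-i}2^j\,\omega^{m-i-j}\Bigr)\int_X c_i\cdot L^{m-i}\cdot\frac{?}{}
\]
and similarly for $X\cap Q$, which has dimension $m-1$ and whose Chern classes are obtained from the adjunction sequence \eqref{eq: seq adjunction} with $\kn_{(X\cap Q)/X}=\ko(Q)|_{X\cap Q}=\kl^{\otimes 2}$, giving $c(X\cap Q)=c(X)|_{X\cap Q}/(1+2L)$. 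Substituting these into \eqref{eq: RR degree X nonsingular transversal to Q} and simplifying the geometric-type sums $\sum_j 2^j\omega^{m-i-j}$ (which telescopes to $\frac{\omega^{m+1-i}-2^{m+1-i}}{\omega-2}$ for $\omega\neq 2$, and to $(m+1-i)2^{m-i}$ for $\omega=2$) yields \eqref{eq: RR degree X nonsingular given line bundle}; the cross-terms between the $\nu_\omega(X)$ contribution and the $(\omega-1)\nu_\omega(X\cap Q)$ correction must be checked to recombine correctly, and this bookkeeping is the routine but lengthy part.

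The main obstacle is the first step: rigorously identifying the correction term $(\omega-1)\,\mathrm{gDD}(\nu_\omega(X\cap Q))$ as the precise defect caused by the non-transverse intersection of $Q_\mBW$ with $\nu_\omega(X)$. One cannot simply invoke general position; instead I expect to argue at the level of the conormal variety of $\nu_\omega(X)$ and analyze its intersection with the incidence correspondence cut out by $Q_\mBW$, or alternatively to use a specialization/continuity argument degenerating a generic quadric to $Q_\mBW$ and computing the limit cycle, whose excess component is supported on the conormal variety of $\nu_\omega(X\cap Q)$ with multiplicity $\omega-1$. Verifying that this limit is reduced away from that excess component, and that the multiplicity is exactly $\omega-1$ (not merely bounded by it), is the delicate point and likely requires the transversality of $X$ and $Q$ in an essential way, together with the explicit order-$\omega$ vanishing of $q_\mBW\circ\nu_\omega$.
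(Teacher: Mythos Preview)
Your overall strategy matches the paper's, and the Chern-class computation for the second identity is essentially identical: the paper also invokes \cite[Theorem~5.8]{draisma2016euclidean} for both $\nu_\omega(X)$ and $\nu_\omega(X\cap Q)$, expresses the Chern numbers of $X\cap Q$ in terms of those of $X$ (the paper cites \cite{parusinski1995chern} rather than writing out the adjunction $c(X\cap Q)=c(X)/(1+2L)$, but these are equivalent), and then carries out exactly the ``routine but lengthy'' telescoping you anticipate.

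The substantive difference is in the first identity. You correctly identify the geometric mechanism---$q_\mBW\circ\nu_\omega=q^\omega$, so scheme-theoretically $\nu_\omega(X)\cap Q_\mBW=\omega\cdot\nu_\omega(X\cap Q)$---but you then propose to extract the defect $(\omega-1)\,\mathrm{gDD}(\nu_\omega(X\cap Q))$ by a bespoke conormal-variety or specialization argument, and you flag this as the delicate step. The paper sidesteps this entirely: once one observes that the intersection $\nu_\omega(X)\cap Q_\mBW$ is \emph{equisingular} (a nonreduced scheme of uniform multiplicity $\omega$ supported on the smooth variety $\nu_\omega(X\cap Q)$), the identity \eqref{eq: RR degree X nonsingular transversal to Q} is a direct citation of \cite[Corollary~3.6]{maxim2020defect}, which gives a general formula for the ED-degree defect in exactly this situation. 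So what you describe as the main obstacle is in fact already packaged in the literature; your proposed hands-on argument would amount to reproving that corollary in this special case.

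One minor point: your displayed computation of $\mathrm{gDD}(\nu_\omega(X))$ has a slip. Substituting $H=\omega L$ into the gDD formula gives coefficient $(2^{m+1-i}-1)\omega^{m-i}$, \emph{not} $\sum_j 2^j\omega^{m-i-j}$; the latter is the coefficient in the \emph{final} answer \eqref{eq: RR degree X nonsingular given line bundle}, and it only emerges after subtracting the $(\omega-1)\,\mathrm{gDD}(\nu_\omega(X\cap Q))$ correction. You acknowledge this recombination is needed, but the intermediate equality as written is incorrect (and the trailing ``$\cdot\frac{?}{}$'' should be removed).
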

\begin{proof}
We know that $\RRdeg_\omega(X)=\mathrm{DD}(\nu_\omega(X),Q_\mBW)$. Since $X$ is in general position with respect to $Q$, and since $Q_\mBW$ is the Bombieri-Weyl isotropic quadric in $\PP(\C[x]_\omega)$ associated to $Q$, then the intersection $\nu_\omega(X)\cap Q_\mBW$ is nonreduced and scheme-theoretically is equal to $\omega\,\nu_\omega(X\cap Q)$, in particular $\nu_\omega(X)\cap Q_\mBW$ is equisingular. The identity \eqref{eq: RR degree X nonsingular transversal to Q} is then a consequence of \cite[Corollary 3.6]{maxim2020defect}.

For the second identity, we apply \cite[Theorem 5.8]{draisma2016euclidean} twice: given a nonsingular projective variety $Y$ of dimension $m$, embedded in projective space via the line bundle $\kl$, if $L=c_1(\kl)$, then
\begin{equation}\label{eq: gen ED degree Chern}
    \mathrm{gDD}(Y) = \sum_{i=0}^{m}(-1)^i(2^{m+1-i}-1)\int_Y c_i(Y)\cdot L^{m-i}\,.
\end{equation}
Consider the embedding $\tau\colon X\cap Q\hookrightarrow X$ and let $\widetilde{L}=c_1(\ko_{X\cap Q}(1))=c_1(\tau^*\ko_X(1))$. The equation appearing three lines after \cite[Lemma 1.2]{parusinski1995chern} reads
\begin{equation}\label{eq: Chern classes intersection with hypersurface}
    \int_{X\cap Q} c_i(X\cap Q)\cdot \widetilde{L}^{m-1-i} = 2\sum_{j=0}^i(-1)^{i-j}2^{i-j}\int_X c_j \cdot L^{m-j}
\end{equation}
and allows us to write the Chern classes of $X\cap Q$ in terms of the Chern classes of $X$. Using \eqref{eq: RR degree X nonsingular transversal to Q}, \eqref{eq: gen ED degree Chern}, and \eqref{eq: Chern classes intersection with hypersurface}, we have
\[
\resizebox{\textwidth}{!}{
$\begin{aligned}
&\RRdeg_\omega(X) = \mathrm{gDD}(\nu_\omega(X))-(\omega-1)\mathrm{gDD}(\nu_\omega(X\cap Q))\\
&=\sum_{i=0}^m(-1)^i(2^{m+1-i}-1)\int_X c_i \cdot (\omega L)^{m-i}-(\omega-1)\sum_{i=0}^{m-1}(-1)^i(2^{m-i}-1)\int_{X\cap Q}c_i(X\cap Q)\cdot (\omega\widetilde{L})^{m-1-i}\\
&= \sum_{i=0}^m(-1)^i(2^{m+1-i}-1)\,\omega^{m-i}\int_X c_i \cdot L^{m-i}-(\omega-1)\sum_{i=0}^{m-1}(-1)^i(2^{m-i}-1)\,\omega^{m-1-i}\int_{X\cap Q}c_i(X\cap Q)\cdot\widetilde{L}^{m-1-i}\\
&= \sum_{i=0}^m(-1)^i(2^{m+1-i}-1)\,\omega^{m-i}\int_X c_i \cdot L^{m-i}-2(\omega-1)\sum_{i=0}^{m-1}(-1)^i(2^{m-i}-1)\,\omega^{m-1-i}\sum_{j=0}^i(-1)^{i-j}2^{i-j}\int_X c_j \cdot L^{m-j}\\
&= \sum_{i=0}^m(-1)^i\omega^{m-i}\left[(2^{m+1-i}-1)\int_X c_i \cdot L^{m-i}-\frac{2(\omega-1)}{\omega}(2^{m-i}-1)\sum_{j=0}^i(-2)^{i-j}\int_X c_j \cdot L^{m-j}\right]\,.
\end{aligned}$}
\]
The coefficient of $\int_X c_i\cdot L^{m-i}$ in the last line of the previous equation is
\[
\resizebox{0.95\textwidth}{!}{
$\begin{aligned}
&(-1)^i\omega^{m-i}(2^{m+1-i}-1)-\sum_{j=i}^m(-1)^j\omega^{m-j}\frac{2(\omega-1)}{\omega}(2^{m-j}-1)(-2)^{j-i}\\
&=(-1)^i\left[\omega^{m-i}(2^{m+1-i}-1)-\sum_{j=i}^m\left(\omega^{m-j}\frac{2(\omega-1)}{\omega}(2^{m-j}-1)\right)2^{j-i}\right]\\
&=(-1)^i\left[\omega^{m-i}(2^{m+1-i}-1)-\sum_{j=i}^m\left(\omega^{m-j}\frac{2(\omega-1)}{\omega}2^{m-i}\right)+\sum_{j=i}^m\left(\omega^{m-j}\left(\frac{\omega-2}{\omega}\right)2^{j-i}\right)+\sum_{j=i}^m\omega^{m-j}2^{j-i}\right]\\
&=(-1)^i\left[\omega^{m-i}(2^{m+1-i}-1)-\frac{\omega^{m-i+1}-1}{\omega}2^{m-i+1}-\frac{2^{m-i+1}-\omega^{m-i+1}}{\omega}+\sum_{j=i}^m\omega^{m-j}2^{j-i}\right]\\
&=(-1)^i\sum_{j=i}^m\omega^{m-j}2^{j-i}\,.
\end{aligned}$}
\]
For $\omega=2$, we have $(-1)^i\sum_{j=i}^m\omega^{m-j}2^{j-i}=(-1)^i(m+1-i)2^{m-i}$, while for $\omega\geq 3$
\[
(-1)^i\sum_{j=i}^m\omega^{m-j}2^{j-i}=(-1)^i\sum_{j=0}^{m-i}\omega^{m-j-i}2^{j}=(-1)^i\frac{\omega^{m+1-i}-2^{m+1-i}}{\omega-2}\,.
\]
In both cases, we get the desired identity \eqref{eq: RR degree X nonsingular given line bundle}.
\end{proof}

\begin{remark}
One might derive similar RR degree formulas as in Theorem \ref{thm: RR degree X nonsingular transversal to Q} for singular varieties: indeed \cite[Proposition 2.9]{aluffi2018projective} generalizes \eqref{eq: gen ED degree Chern} to singular varieties and \eqref{eq: Chern classes intersection with hypersurface} can also be applied to singular varieties by replacing Chern classes with Chern-Mather classes. However, $\nu_\omega(X)$ needs to satisfy the assumptions of \cite[Corollary 3.6]{maxim2020defect}.
\end{remark}

The following \verb|Macaulay2| function \cite{grayson1997macaulay2} computes the RR degree in \eqref{eq: RR degree X nonsingular given line bundle} for any input $m$, as a polynomial in $L,c_1,\ldots,c_m$, with parameter $\omega$:

\begin{tcolorbox}[width=\linewidth,colback=blue!5!white,colframe=blue!75!black]
\begin{Verbatim}[fontsize=\small,commandchars=\\\{\}]
RRdeg = m -> (
    c_0 = 1;
    R = \textcolor{purple}{QQ}[om][c_1..c_m,L, \textcolor{red}{Degrees} => \textcolor{blue}{toList}(1..m)|\{1\}];
    \textcolor{cyan}{return} \textcolor{blue}{sum}(m+1,i-> (-1)^i*\textcolor{blue}{sum}(m+1-i,j-> om^(m-i-j)*2^j)*c_i*L^(m-i))
    )

RRdeg(2) \textcolor{orange}{-- = c_2+(-om-2)*c_1*L+(om^2+2*om+4)*L^2}
\end{Verbatim}
\end{tcolorbox}

\begin{example}[RR degrees of nonsingular curves]\label{ex: RR degree X nonsingular curve}
Consider the assumptions of Theorem \ref{thm: RR degree X nonsingular transversal to Q}, and let $m=1$. One verifies that
\begin{equation}\label{eq: RR degree X nonsingular curve}
    \RRdeg_\omega(X) = \int_X (\omega+2)L-c_1\,.
\end{equation}
For example, let $X$ be the image of the Veronese embedding $\nu_d\colon\PP^1\hookrightarrow\PP^d$, after a generic linear transformation. We already know by Proposition \ref{prop: RR degree generic morphism} that $\RRdeg_\omega(X) = (\omega+2)d-2$.
One can derive the same identity using \eqref{eq: RR degree X nonsingular curve}. Let $h=\ko_{\PP^1}(1)$. In this case $c(\PP^1)=1+2h$ and $L=dh$, hence $\RRdeg_\omega(X) = \int_{\PP^1}(\omega+2)dh-2h=[(\omega+2)d-2]\int_{\PP^1}h=(\omega+2)d-2$.

Suppose instead that $X\subseteq\PP^2$ is a nonsingular curve of degree $\delta$.
We know already from Corollary \ref{corol: BW degree hypersurface} that $\RRdeg_\omega(X)=\delta(\delta+\omega-1)$.
We confirm this formula using \eqref{eq: RR degree X nonsingular curve}.
By \eqref{eq: total Chern class hypersurface} we have $c(X)=1+c_1=(1+L)^3(1-\delta L)=(1+3L)(1-\delta L)=1+(3-\delta)L$. Therefore $\RRdeg_\omega(X)=\int_X(\omega+2)L+(\delta-3)L=[\delta+\omega-1]\int_X L=\delta(\delta+\omega-1)$.

Alternatively, one can easily derive the same formulas by applying directly \eqref{eq: RR degree X nonsingular transversal to Q}, since in this case $\nu_\omega(X\cap Q)$ consists of $2\deg(X)$ simple points.\hfill$\diamondsuit$
\end{example}

\begin{example}[RR degrees of nonsingular surfaces]\label{ex: RR degree X nonsingular surface}
Consider the assumptions of Theorem \ref{thm: RR degree X nonsingular transversal to Q}, and let $m=2$. One verifies that
\begin{equation}\label{eq: RR degree X nonsingular surface}
    \RRdeg_\omega(X) = \int_X (\omega^2+2\omega+4)L^2-(\omega+2)c_1\cdot L+c_2\,.
\end{equation}
For example, let $X$ be the image of the Veronese embedding $\nu_d\colon\PP^2\hookrightarrow\PP(\mathrm{Sym}^d\C^3)$, after a generic linear transformation. We already know by Proposition \ref{prop: RR degree generic morphism} that $\RRdeg_\omega(X) = (\omega^2+2\omega+4)d^2-3(\omega+2)d+3$.
One can derive the same identity using \eqref{eq: RR degree X nonsingular surface}. Let $h=\ko_{\PP^2}(1)$. In this case $c(\PP^2)=1+3h+3h^2$ and $L=dh$, hence
\[
\RRdeg_\omega(X) = \int_{\PP^2}(\omega^2+2\omega+4)d^2h^2-(\omega+2)3dh^2+3h^2 = (\omega^2+2\omega+4)d^2-3(\omega+2)d+3\,,
\]
which confirms the previous formula.

Now let $X\subseteq\PP^3$ be a nonsingular surface of degree $\delta$. We know already from Corollary \ref{corol: BW degree hypersurface} that $\RRdeg(X)=\delta[(\delta-1)^2+\omega\delta+(\omega-1)^2]$.
Using \eqref{eq: total Chern class hypersurface}, we have $c(X)=1+c_1+c_2=(1+4L+6L^2)(1-\delta L+\delta^2L^2)=1+(4-\delta)L+(6-4\delta+\delta^2)L^2$, hence
\begin{align*}
\RRdeg_\omega(X) &= \int_X (\omega^2+2\omega+4)L^2-(\omega+2)(4-\delta)L^2+(6-4\delta+\delta^2)L^2\\
&= \delta[(\omega^2+2\omega+4)-(\omega+2)(4-\delta)+(6-4\delta+\delta^2)]\\
&= \delta[(\delta-1)^2+\omega\delta+(\omega-1)^2]\,,
\end{align*}
which confirms the previous formula.\hfill$\diamondsuit$
\end{example}

\begin{example}[RR degrees of nonsingular toric varieties]\label{ex: RR degree toric}
Let $\iota: X\hookrightarrow \PP^n$ be an $m$-dimensional toric embedding with corresponding lattice polytope $P\subseteq \R^{m}$. The Chern classes of $X$ are (see \cite[Proposition 13.1.2]{cox2011toric} and \cite[Corollary 11.5]{danilov1978geometry})
\[
    c_i=\sum_{F\in P_i}[F]\text{ with }\int_X c_i\cdot L^{m-i}=\sum_{F\in P_i}\mathrm{Vol}(F)\,,
\]
where $P_i$ is the set of faces of $P$ of codimension $i$ and $\mathrm{Vol}(F)$ is the normalized volume of $F$ with respect to the lattice induced by $\Z^{m}$ on the minimal linear space containing $F$.
As in the previous examples, we can use the formula of Theorem \ref{thm: RR degree X nonsingular transversal to Q} to compute the Rayleigh-Ritz degree of $X$, assuming it also satisfies the assumptions of the theorem:
\begin{equation}\label{eq: RR degree toric}
    \RRdeg_{\omega}(X)=\sum_{i=0}^m(-1)^i\left(\sum_{j=0}^{m-i}\omega^{m-i-j}2^j\right)\sum_{F\in P_i}\mathrm{Vol}(F).
\end{equation}
For example, consider the toric variety $X$ associated with the lattice polytope $P$ generated by the matrix
\[
    M=\begin{pmatrix} 1&0&2&1&2\\0&1&1&2&2\end{pmatrix}\,.
\]
Then $P$ is a nonsingular lattice polytope; therefore, the toric variety $X$ is also nonsingular. Suppose that $X$ is in general position with respect to the fixed quadric $Q$. Since it has six vertices and six edges, all with normalized volume $1$, and a single face of normalized volume $6$, we have
\begin{equation*}
    \RRdeg_{2}(X)=6\left(\sum_{i=0}^2(-1)^i(3-i)2^{2-i}\right)=54\,.\tag*{\text{$\diamondsuit$}}
\end{equation*}
\end{example}

In the following, we consider the tuples $\bm=(m_1,\ldots,m_k)\in\N^k$ and $\bd=(d_1,\ldots,d_k)\in\N^k$. Consider the multiprojective space $\PP^\bm\coloneqq\prod_{i=1}^k\PP^{m_i}$. Let $\nu_\bd\colon\PP^\bm\to\PP^M=\PP(\bigotimes_{i=1}^k\mathrm{Sym}^{d_i}\C^{m_i+1})$ be the Segre-Veronese embedding of $\PP^\bm$ via the line bundle $\ko_{\PP^\bm}(\bd)$. In the following, we denote its image $\nu_\bd(\PP^\bm)$ simply by $X$. We write $m=m_1+\cdots+m_k=\dim X$.
For all $i$, we denote by $\pi_i$ the projection of $\PP^\bm$ onto its $i$th factor $\PP^{m_i}$. Let $h_i\coloneqq c_1(\pi_i^*\ko_{\PP^{m_i}}(1))$ be the pullback of the hyperplane class in the $i$-th factor. Then by K\"unneth's formula the Chow ring of $\PP^\bm$ is $A^*(\PP^\bm)=\Z[h_1,\ldots,h_k]/(h_1^{m_1+1},\ldots,h_k^{m_k+1})$. We furthermore write $L=c_1(\ko_{\PP^\bm}(1))$ for the pullback of a hyperplane section from $\PP^M$. Tracking the pullback of $\ko_{\PP^M}(1)$ under the Segre-Veronese embedding $\nu_\bd$ we get $L=d_1h_1+\cdots+d_kh_k$.
Note that $L^m = (d_1h_1+\cdots+d_kh_k)^m = \frac{m!}{m_1!\cdots m_k!}(d_1h_1)^{m_1}\cdots(d_kh_k)^{m_k}$, in particular the degree of $\nu_\bd(\PP^\bm)$ is $\frac{m!}{m_1!\cdots m_k!}d_1^{m_1}\cdots d_k^{m_k}$.
More generally, we recall the following computations.

\begin{proposition}{\cite[Proposition 3.4]{kozhasov2023minimal}}\label{prop: degrees Chern classes Segre-Veronese}
Let $X$ be the image of the Segre-Veronese embedding $\nu_\bd$. Write $L=d_1h_1+\cdots+d_kh_k$ and $c(X)=1+c_1+\cdots+c_m$, where $m=m_1+\cdots+m_k$. Then
\begin{equation}\label{eq: degree ith Chern class Segre-Veronese}
\int_{\PP^\bm} c_i\cdot L^{m-i} = (m-i)!\sum_{\balpha\in\kp_{\bm,i}}\bd^{\bm-\balpha}\prod_{j=1}^k\frac{\binom{m_j+1}{\alpha_j}}{(m_j-\alpha_j)!}\quad\forall\,i\in\{0,\ldots,m\}\,,
\end{equation}
where $\bd^{\bm-\balpha}=d_1^{m_1-\alpha_1}\cdots d_k^{m_k-\alpha_k}$ and
$\kp_{\bm,i} \coloneqq \{\balpha\in\N^k\mid\text{$|\balpha|=i$ and $\alpha_j\le m_j$ for all $j\in[k]$}\}$.
\end{proposition}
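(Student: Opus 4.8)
The plan is to carry out the entire computation inside the Chow ring $A^*(\PP^\bm)=\Z[h_1,\ldots,h_k]/(h_1^{m_1+1},\ldots,h_k^{m_k+1})$, using that $\nu_\bd$ is a closed embedding, hence an isomorphism of $\PP^\bm$ onto $X$. Under this identification the Chern classes $c_i=c_i(\kt_X)$ agree with those of $\PP^\bm$, the degree map $\int_X$ becomes $\int_{\PP^\bm}$, and $L=d_1h_1+\cdots+d_kh_k$ as recorded just before the statement, with $\int_{\PP^\bm}h_1^{m_1}\cdots h_k^{m_k}=1$.

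\textbf{Step 1: total Chern class of $\PP^\bm$.} Since $\PP^\bm=\prod_{j=1}^k\PP^{m_j}$ is a product, $\kt_{\PP^\bm}=\bigoplus_{j=1}^k\pi_j^*\kt_{\PP^{m_j}}$, so the Whitney sum property together with the Euler sequence on each factor gives $c(\PP^\bm)=\prod_{j=1}^k(1+h_j)^{m_j+1}$. Extracting the codimension-$i$ graded piece and deleting the monomials annihilated by the relations $h_j^{m_j+1}=0$, one obtains
\[
c_i=\sum_{\balpha\in\kp_{\bm,i}}\Bigl(\prod_{j=1}^k\binom{m_j+1}{\alpha_j}\Bigr)\,h_1^{\alpha_1}\cdots h_k^{\alpha_k}\,.
\]

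\textbf{Step 2: expand $L^{m-i}$ and pair.} Next I would apply the multinomial theorem to $L^{m-i}=(d_1h_1+\cdots+d_kh_k)^{m-i}$, multiply by the expression for $c_i$ above, and read off the coefficient of the fundamental point class $h_1^{m_1}\cdots h_k^{m_k}$, which kills every other monomial of top total degree modulo the defining relations. A product $h^{\balpha}\cdot h^{\bbeta}$ survives exactly when $\alpha_j+\beta_j=m_j$ for all $j$, that is $\bbeta=\bm-\balpha$; this already forces $\alpha_j\le m_j$ (the condition $\balpha\in\kp_{\bm,i}$) and makes $|\bbeta|=m-i$ automatic because $|\bm|=m$ and $|\balpha|=i$. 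The multinomial coefficient attached to $\bbeta=\bm-\balpha$ is $(m-i)!/\prod_j(m_j-\alpha_j)!$ and the corresponding power of $\bd$ is $\bd^{\bm-\balpha}$, so summing over $\balpha$ yields exactly \eqref{eq: degree ith Chern class Segre-Veronese}.

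I do not expect a genuine obstacle here, since every step is a routine manipulation in the Chow ring of a product of projective spaces; the only place that needs a careful word is the reduction modulo $(h_1^{m_1+1},\ldots,h_k^{m_k+1})$, i.e.\ making explicit that the unique term of $c_i\cdot L^{m-i}$ contributing to the degree is the one indexed by $\bbeta=\bm-\balpha$. An alternative, essentially equivalent, route is to keep the generating function $\prod_j(1+h_j)^{m_j+1}$ unexpanded and pair it with $L^{m-i}$ directly, but the monomial-extraction argument above is the most transparent.
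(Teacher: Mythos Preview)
Your argument is correct. The paper does not supply its own proof of this proposition: it is quoted verbatim from \cite[Proposition 3.4]{kozhasov2023minimal} and used as a black box, so there is no ``paper's proof'' to compare against. Your computation is exactly the natural one---identify $X$ with $\PP^\bm$ via $\nu_\bd$, write $c(\PP^\bm)=\prod_j(1+h_j)^{m_j+1}$, expand $L^{m-i}$ by the multinomial theorem, and extract the coefficient of $h_1^{m_1}\cdots h_k^{m_k}$---and every step is routine in $A^*(\PP^\bm)$. The only cosmetic point: in Step~1 you could even keep the terms with $\alpha_j=m_j+1$ in the expansion of $(1+h_j)^{m_j+1}$, since they vanish in the quotient anyway; restricting to $\balpha\in\kp_{\bm,i}$ up front is harmless but not logically necessary until Step~2, where the pairing with $h^{\bm-\balpha}$ forces $\alpha_j\le m_j$.
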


The following \verb|Macaulay2| function computes the RR degree of a Segre-Veronese variety for any input $(\bm,\bd,\omega)$. In the code, \verb|m| and \verb|d| are lists with the components of $\bm$ and $\bd$ respectively, while \verb|om| is the integer $\omega\ge 1$:

\begin{tcolorbox}[width=\linewidth,colback=blue!5!white,colframe=blue!75!black]
\begin{Verbatim}[fontsize=\scriptsize,commandchars=\\\{\}]
RRdegSegVer = (m,d,om) -> (
    k = #m;
    dimX = \textcolor{blue}{sum}(m);
    R = \textcolor{purple}{QQ}[c_1..c_dimX,L,h_1..h_k,
	\textcolor{red}{Degrees}=>\textcolor{blue}{toList}(1..dimX)|\textcolor{blue}{toList}(k+1:1)]/\textcolor{blue}{ideal}(\textcolor{blue}{apply}(k, i-> h_(i+1)^(m#i+1)));
    c_0 = 1;
    totChernClassX = \textcolor{blue}{product}(k, i-> (1+h_(i+1))^(m#i+1));
    chernClassesX = \textcolor{blue}{apply}(dimX+1, i-> \textcolor{blue}{part}(i,totChernClassX));
    RRdeg = \textcolor{blue}{sum}(dimX+1, i-> (-1)^i*\textcolor{blue}{sum}(dimX+1-i, j-> om^(dimX-i-j)*2^j)*c_i*L^(dimX-i));
    subs = \textcolor{blue}{apply}(dimX, i-> c_(i+1) => chernClassesX#(i+1))|\{L=>\textcolor{blue}{sum}(k, i-> (d#i)*h_(i+1))\};
    \textcolor{cyan}{return} \textcolor{blue}{sub}(\textcolor{blue}{sub}(RRdeg, subs), \textcolor{blue}{apply}(k, i-> h_(i+1)=>1))
    )

RRdegSegVer(\{2,3\},\{2,2\},3) \textcolor{orange}{-- = 117240}
\end{Verbatim}
\end{tcolorbox}

In particular, we have the following corollary of Theorem \ref{thm: RR degree X nonsingular transversal to Q}, which coincides with \cite[Theorem 3.5]{kozhasov2023minimal} for $\omega=1$.

\begin{corollary}\label{corol: RR degree Segre-Veronese general position}
Let $X$ be the image of the Segre-Veronese embedding $\nu_\bd$, in general position with respect to the fixed isotropic quadric $Q$. Write $L=d_1h_1+\cdots+d_kh_k$ and $c(X)=1+c_1+\cdots+c_m$, where $m=m_1+\cdots+m_k$. Then
\begin{equation}\label{eq: RR degree Segre-Veronese general position}
\RRdeg_\omega(X) = \sum_{i=0}^m(-1)^i\left(\sum_{j=0}^{m-i}\omega^{m-i-j}2^j\right)(m-i)!\sum_{\balpha\in\kp_{\bm,i}}\bd^{\bm-\balpha}\prod_{j=1}^k\frac{\binom{m_j+1}{\alpha_j}}{(m_j-\alpha_j)!}
\end{equation}
where the index set $\kp_{\bm,i}$ is defined in Proposition \ref{prop: degrees Chern classes Segre-Veronese}.
\end{corollary}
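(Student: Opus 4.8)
The plan is to obtain \eqref{eq: RR degree Segre-Veronese general position} as a direct specialization of Theorem~\ref{thm: RR degree X nonsingular transversal to Q}: the substantive computations have already been carried out there and in Proposition~\ref{prop: degrees Chern classes Segre-Veronese}, so the proof reduces to checking hypotheses and performing one substitution.

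First I would verify that $X=\nu_\bd(\PP^\bm)$ meets the assumptions of Theorem~\ref{thm: RR degree X nonsingular transversal to Q}. The Segre-Veronese map $\nu_\bd$ is a closed embedding of the smooth variety $\PP^\bm$ given by the very ample line bundle $\ko_{\PP^\bm}(\bd)$; hence $X$ is nonsingular of dimension $m=m_1+\cdots+m_k$, and its embedding in projective space is induced by a line bundle $\kl$ with $L=c_1(\kl)=d_1h_1+\cdots+d_kh_k$, as recorded just before the statement. The remaining requirement, that $X$ be in general position with respect to the fixed isotropic quadric $Q$, is part of the hypotheses of the corollary; this also guarantees that $\nu_\omega(X)$ satisfies the condition behind \cite[Corollary 3.6]{maxim2020defect} on which Theorem~\ref{thm: RR degree X nonsingular transversal to Q} rests.

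Next I would invoke the Chern-class form \eqref{eq: RR degree X nonsingular given line bundle} of Theorem~\ref{thm: RR degree X nonsingular transversal to Q}, in the uniform shape
\[
\RRdeg_\omega(X)=\sum_{i=0}^m(-1)^i\left(\sum_{j=0}^{m-i}\omega^{m-i-j}2^j\right)\int_X c_i\cdot L^{m-i}\,,
\]
valid for every $\omega\ge 1$, the two displayed cases $\omega=2$ and $\omega\neq 2$ being merely closed-form rewritings of the inner sum. Into each Chern number $\int_X c_i\cdot L^{m-i}$ I would then substitute the value
\[
(m-i)!\sum_{\balpha\in\kp_{\bm,i}}\bd^{\bm-\balpha}\prod_{j=1}^k\frac{\binom{m_j+1}{\alpha_j}}{(m_j-\alpha_j)!}
\]
furnished by \eqref{eq: degree ith Chern class Segre-Veronese} in Proposition~\ref{prop: degrees Chern classes Segre-Veronese}; this produces \eqref{eq: RR degree Segre-Veronese general position} at once. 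As a consistency check, for $\omega=1$ the inner factor collapses to $2^{m-i+1}-1$ and \eqref{eq: RR degree Segre-Veronese general position} reduces to \cite[Theorem 3.5]{kozhasov2023minimal}, as asserted in the statement. There is essentially no obstacle here; the only point deserving attention is confirming that all hypotheses of Theorem~\ref{thm: RR degree X nonsingular transversal to Q}---smoothness, line-bundle embedding, and transversality to $Q$---hold for the Segre-Veronese variety, after which the corollary is a mechanical substitution.
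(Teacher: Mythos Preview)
Your proposal is correct and matches the paper's approach exactly: the corollary is stated immediately after Proposition~\ref{prop: degrees Chern classes Segre-Veronese} as a direct consequence of Theorem~\ref{thm: RR degree X nonsingular transversal to Q}, obtained by substituting \eqref{eq: degree ith Chern class Segre-Veronese} into \eqref{eq: RR degree X nonsingular given line bundle}. The paper gives no separate proof, so your verification of hypotheses and explicit substitution is precisely what is intended.
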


\begin{example}\label{ex: RR degree product lines transversal Q}
The formula in Corollary \ref{corol: RR degree Segre-Veronese general position} simplifies when $\bm=\bd=\bone=(1,\ldots,1)\in\N^k$:
\begin{equation}\label{eq: RR degree product lines transversal Q}
    \RRdeg_\omega(X) = 
    \begin{cases}
    k!\,2^k\sum_{i=0}^k\frac{(-1)^i}{i!}(k+1-i) & \text{if $\omega=2$}\\[2pt]
    k!\sum_{i=0}^k\frac{(-1)^i}{i!}\frac{\omega^{k+1-i}2^i-2^{k+1}}{\omega-2} & \text{if $\omega\neq 2$.}
    \end{cases}
\end{equation}
We verify the formula above. In this case, we have $m=k$. Furthermore, the second sum in \eqref{eq: RR degree Segre-Veronese general position} is equal to $\binom{k}{i}2^i$ for all $i\in\{0,\ldots,m\}$. Hence
\[
    \RRdeg_\omega(X) = \sum_{i=0}^k(-1)^i\left(\sum_{j=0}^{k-i}\omega^{k-i-j}2^j\right)(k-i)!\binom{k}{i}2^i = k!\sum_{i=0}^k\frac{(-1)^i}{i!}\left(\sum_{j=0}^{k-i}\omega^{k-i-j}2^{i+j}\right)\,,
\]
which coincides with \eqref{eq: RR degree product lines transversal Q}.
For $\omega=1$, the formula provides the generic distance degree of a product of projective lines, given also in \cite[Corollary 3.8]{kozhasov2023minimal}.
One verifies that, for $k=2$, the degree is $\RRdeg_\omega(X)=2(\omega^2+2)$, which agrees with Example \ref{ex: RR degree X nonsingular surface} as $X$ is a degree 2 hypersurface in $\PP^3$ in this case.
For $\omega=2$ and $k\in\{2,\ldots,6\}$, the values of $\RRdeg_2(X)$ are $12,88,848,9888,135616$, respectively.\hfill$\diamondsuit$
\end{example}

Segre-Veronese embeddings are also special toric embeddings. Therefore, the formula in \eqref{eq: RR degree Segre-Veronese general position} can also be obtained by applying \eqref{eq: RR degree toric}.
For example, considering the variety $X=\sigma_k((\PP^1)^{\times k})$, the corresponding polytope is the $k$-dimensional hypercube. Since it has $2^i\binom{k}{i}$ faces of codimension $i$ all with normalized volume $(k-i)!$, using \eqref{eq: RR degree toric} we again obtain Equation~\eqref{eq: RR degree product lines transversal Q}.

\section{Rayleigh-Ritz degrees and singular tuples of tensors}\label{sec: RR degrees singular tuples}

Let $X$ be the image of the Segre-Veronese embedding $\nu_\bd$, in particular $X\subseteq\PP(V)$, where $V=\bigotimes_{i=1}^k\mathrm{Sym}^{d_i}\C^{m_i+1}$.
In this section, we fix the quadric $Q=Q_\mBW\subseteq\PP(V)$ and we compute $\RRdeg_\omega(X)$. In this case, $X$ is not transversal to $Q_\mBW$; therefore, we cannot apply Corollary \ref{corol: RR degree Segre-Veronese general position}. Indeed, under these assumptions, the RR degree of $X$ is smaller and is computed in the following result.

\begin{proposition}\label{prop: RR degree Segre-Veronese special}
For any $\bm\in\N^k$ and $\bd\in\N^k$, let $V=\bigotimes_{i=1}^k\mathrm{Sym}^{d_i}\C^{m_i+1}$ and consider the Segre-Veronese variety $X=\nu_\bd(\PP^\bm)\subseteq\PP(V)$. Fix the Bombieri-Weyl isotropic quadric $Q_\mBW\subseteq\PP(V)$. For any $\omega\ge 1$, $\RRdeg_\omega(X)$ equals the coefficient of the monomial $h_1^{m_1}\cdots h_k^{m_k}$ in the expansion of
\[
\prod_{i=1}^k\frac{\widehat{h}_i^{m_i+1}-h_i^{m_i+1}}{\widehat{h}_i-h_i},\quad\widehat{h}_i\coloneqq \omega\left(\sum_{j=1}^k d_jh_j\right)-h_i\,.
\]
\end{proposition}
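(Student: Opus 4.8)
The plan is to pull the problem back along the Segre--Veronese parametrization. Realize $\nu_\bd\colon\PP^\bm\hookrightarrow\PP(V)$ as the embedding by the complete linear system $|\ko_{\PP^\bm}(\bd)|$, so that $f\mapsto f\circ\nu_\bd$ is surjective onto $H^0(\PP^\bm,\ko_{\PP^\bm}(\omega\bd))=\C[t]_{\omega\bd}$; hence, for generic $f$, the pullback $F\coloneqq f\circ\nu_\bd$ is a generic multihomogeneous polynomial of multidegree $\omega\bd$ in the blocks of variables $t^{(1)},\dots,t^{(k)}$. With $q_i$ the standard quadratic form on $\C^{m_i+1}$, the identities \eqref{eq: identities BW inner product} give $q_\mBW\circ\nu_\bd(w)=\prod_i q_i(w^{(i)})^{d_i}$. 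Using the normal-space description of critical points from the proof of Proposition~\ref{prop: equivalence}, together with Euler's relation applied block by block, one shows: for $[w]=([w^{(1)}],\dots,[w^{(k)}])\in\PP^\bm$ with no $w^{(i)}$ isotropic for $q_i$, the point $\nu_\bd(w)$ is an $X$-eigenvector of $f$ if and only if $\nabla_{t^{(i)}}F(w)$ is proportional to $w^{(i)}$ in $\C^{m_i+1}$ for every $i$. The subtle point is that the single critical-point equation on $C(X)\cap S_{q_\mBW}$ a priori imposes one proportionality constant $\lambda$ shared by all $k$ blocks; using $\langle\nabla_{t^{(i)}}F(w),w^{(i)}\rangle=\omega d_i\,F(w)$ one checks $\lambda=\frac{\omega F(w)}{2\,q_\mBW(\nu_\bd(w))}$ independently of $i$, so the $k$ conditions genuinely decouple, and $\nu_\bd$ maps this locus bijectively onto the set of $X$-eigenpoints of $f$.

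\textbf{Step 2 (a vector bundle on $\PP^\bm$).} Let $\kg_i$ be the pullback along $\pi_i$ of the rank-$m_i$ quotient bundle in the tautological sequence $0\to\ko_{\PP^{m_i}}(-1)\to\ko_{\PP^{m_i}}^{\oplus(m_i+1)}\to\kg_i\to0$, with fibre $\C^{m_i+1}/\langle w^{(i)}\rangle$ over $[w^{(i)}]$. Since $\nabla_{t^{(i)}}F$ has degree $\omega d_i-1$ in $t^{(i)}$ and $\omega d_j$ in $t^{(j)}$ for $j\neq i$, it is a section of $\ko_{\PP^\bm}^{\oplus(m_i+1)}\otimes\ko_{\PP^\bm}(\widehat{h}_i)$ with $\widehat{h}_i=\omega\left(\sum_{j=1}^{k}d_jh_j\right)-h_i$; composing with the tautological quotient, the image is a section of
\[
\ke_i\coloneqq\kg_i\otimes\ko_{\PP^\bm}(\widehat{h}_i),
\]
whose vanishing at $[w]$ is exactly the $i$-th proportionality condition of Step~1. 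Thus $\ke\coloneqq\bigoplus_{i=1}^{k}\ke_i$ has rank $\sum_i m_i=m=\dim\PP^\bm$, and the $X$-eigenpoints of $f$ are the zeros of the resulting section of $\ke$. A Bertini/Porteous argument of the type used in the proofs of Theorem~\ref{thm: RR degree complete intersection} and Proposition~\ref{prop: RR degree generic morphism} shows that for generic $f$ this zero locus is finite, reduced, and disjoint from the loci $\pi_j^{-1}(Q_j)$, so that the correspondence of Step~1 is a genuine bijection and $\RRdeg_\omega(X)=\int_{\PP^\bm}c_m(\ke)$.

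\textbf{Step 3 (the Chern number).} By Whitney additivity $c(\ke)=\prod_i c(\ke_i)$, and since $\rank\ke_i=m_i$, only the product $\prod_i c_{m_i}(\ke_i)$ contributes to the degree-$m$ component. From the tautological sequence $c(\kg_i)=(1-h_i)^{-1}$, so $c_l(\kg_i)=h_i^l$, and the twisting formula $c_j(\kg_i\otimes\kl)=\sum_l\binom{m_i-l}{j-l}c_1(\kl)^{j-l}c_l(\kg_i)$ (equivalently, the splitting principle) gives
\[
c_{m_i}(\ke_i)=\sum_{l=0}^{m_i}\widehat{h}_i^{\,m_i-l}h_i^{l}=\frac{\widehat{h}_i^{\,m_i+1}-h_i^{\,m_i+1}}{\widehat{h}_i-h_i}.
\]
Hence $\RRdeg_\omega(X)=\int_{\PP^\bm}\prod_{i=1}^{k}\frac{\widehat{h}_i^{\,m_i+1}-h_i^{\,m_i+1}}{\widehat{h}_i-h_i}$; as this class is homogeneous of degree $m$ and, in $A^*(\PP^\bm)=\Z[h_1,\dots,h_k]/(h_1^{m_1+1},\dots,h_k^{m_k+1})$, one has $\int_{\PP^\bm}h_1^{m_1}\cdots h_k^{m_k}=1$ while every other degree-$m$ monomial vanishes, the integral equals the coefficient of $h_1^{m_1}\cdots h_k^{m_k}$, which is the stated formula.

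\textbf{The main obstacle.} The genuinely delicate part is Step~1: the ``common-$\lambda$'' computation showing that the single critical-point equation decouples into the $k$ blockwise proportionality conditions, together with the genericity input in Step~2 ensuring that the zero locus of the section of $\ke$ is reduced and avoids all $\pi_j^{-1}(Q_j)$, so that the count is exactly $\int_{\PP^\bm}c_m(\ke)$ rather than merely an upper bound. Step~3, by contrast, is the Friedland--Ottaviani Chern-class computation repackaged for $\ke$, and is routine once the bundle is set up.
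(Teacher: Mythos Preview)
Your proof is correct and reaches the same conclusion, but by a genuinely different route than the paper's.

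The paper's argument is short and modular: it observes that $\nu_\omega(X)\subseteq\PP(\mathrm{Sym}^\omega V)$ is degenerate with span $\PP\bigl(\bigotimes_i\mathrm{Sym}^{\omega d_i}\C^{m_i+1}\bigr)$, that the restriction of the Bombieri--Weyl form on $\mathrm{Sym}^\omega V$ to this span is again a Bombieri--Weyl form, and hence (by the general projection-to-the-span principle for distance degrees, combined with Corollary~\ref{corol: RR degree equals BW degree}) that $\RRdeg_\omega(X)=\mathrm{DD}(\nu_{\omega\bd}(\PP^\bm),Q_\mBW)$. It then simply \emph{quotes} \cite[Theorem~1]{friedland2014number} for the final coefficient formula.

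Your route instead pulls the critical-point equation back along $\nu_\bd$ and verifies directly, via the common-$\lambda$ computation, that it decouples into the $k$ blockwise proportionality conditions defining singular tuples of $F=f\circ\nu_\bd\in\C[t]_{\omega\bd}$; this is essentially the content of the paper's later Proposition~\ref{prop: critical points and singular tuples}, which you establish in passing. You then \emph{reprove} the Friedland--Ottaviani count from scratch via the bundle $\ke=\bigoplus_i\kg_i\otimes\ko_{\PP^\bm}(\widehat{h}_i)$ and its top Chern class. The advantage of your approach is that it is self-contained and makes the underlying geometry explicit; the paper's advantage is brevity, since it treats the Friedland--Ottaviani formula as a black box and relies on a soft linear-algebra observation about degenerate varieties.
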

\begin{proof}
Consider the notation used in Definition \ref{def: critical}. We recall a general property of the distance degree of degenerate varieties, namely, varieties whose linear span is not the entire ambient space. Consider a vector space $W$ equipped with a real positive-definite quadratic form $q$ and a degenerate affine variety $Z\subseteq W$. Let $\pi\colon W\to\langle Z\rangle$ be the linear projection. Then, given a data point $u\in W$, we have $\mathrm{Crit}(\dist_{q,u}^2,Z)=\mathrm{Crit}(\dist_{q',\pi(u)}^2,Z)$, where $q'$ is the restriction of $q$ to $\langle Z\rangle$. In particular, $\mathrm{DD}(Z,Q)=\mathrm{DD}(Z,Q')$, where $Q=\V(q)\subseteq\PP(W)$ and $Q'=\V(q')\subseteq\PP(\langle Z\rangle)$ respectively. Recall that $V=\bigotimes_{i=1}^k\mathrm{Sym}^{d_i}\C^{m_i+1}$.

Let us apply the previous fact in the setting of this proposition. Fix $V=\bigotimes_{i=1}^k\mathrm{Sym}^{d_i}\C^{m_i+1}$ and $W=\mathrm{Sym}^\omega V$ equipped with the Bombieri-Weyl quadratic form $q=q_\mBW$. Let $Z$ be the affine cone over $\nu_\omega(X)=\nu_\omega\circ\nu_\bd(\PP^\bm)$, in particular $\langle Z\rangle=\bigotimes_{i=1}^k\mathrm{Sym}^{\omega d_i}\C^{m_i+1}$ and
\begin{equation}\label{eq: codim W}
\codim_{W}\langle Z\rangle = \dim W-\dim\langle Z\rangle = \binom{\omega-1+\prod_{i=1}^k\binom{m_i+d_i}{d_i}}{\omega}-\prod_{i=1}^k\binom{m_i+\omega d_i}{\omega d_i}\,,
\end{equation}
hence $Z$ is degenerate for all $\omega>1$. In this case, the restriction $q'$ of $q_\mBW$ to $\langle Z\rangle$ is the Bombieri-Weyl quadratic form on $\bigotimes_{i=1}^k\mathrm{Sym}^{\omega d_i}\C^{m_i+1}$ associated with the Segre-Veronese embedding $\nu_{\omega\bd}=\nu_{(\omega d_1,\ldots,\omega d_k)}$, which we denote again with $q_\mBW$. By the previous fact, $\RRdeg_\omega(X)=\mathrm{DD}(\nu_{\omega\bd}(\PP^\bm),Q_\mBW)$.

By \cite[Theorem 1]{friedland2014number}, for any $\bm=(m_1,\ldots,m_k)$ and $\be=(e_1,\ldots,e_k)$ in $\N^k$, the distance degree $\mathrm{DD}(\nu_\be(\PP^\bm),Q_\mBW)$ equals the coefficient of the monomial $h_1^{m_1}\cdots h_k^{m_k}$ in the expansion of
\[
\prod_{i=1}^k\frac{\widehat{h}_i^{m_i+1}-h_i^{m_i+1}}{\widehat{h}_i-h_i},\quad\widehat{h}_i\coloneqq\left(\sum_{j=1}^k e_jh_j\right)-h_i\,.
\]
Therefore, the invariant $\RRdeg_\omega(X)$ is computed by applying the previous result for $\be=\omega\bd$.
\end{proof}

In the following corollary, we apply Proposition \ref{prop: RR degree Segre-Veronese special} in the case of a Segre product of $k$ projective lines. The resulting value is much smaller than the RR degree computed in \eqref{eq: RR degree product lines transversal Q}, where in that case we were assuming transversality between the Segre variety and the isotropic quadric.
As a side remark, when $\bm=\bd=\bone$ and $\omega=2$, the codimension in \eqref{eq: codim W} simplifies to $2^{k-1}(2^k+1)-3^k$. The corresponding integer sequence also appears in \cite[\href{http://oeis.org/A016269}{A016269}]{oeis}.

\begin{corollary}\label{corol: RR degree special product projective lines}
Assume $\bm=\bd=\bone=(1,\ldots,1)\in\N^k$ in Proposition \ref{prop: RR degree Segre-Veronese special}. Then
\[
    \RRdeg_\omega(X) = \omega^k k!\,.
\]
\end{corollary}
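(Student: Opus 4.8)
The plan is to specialize the formula of Proposition \ref{prop: RR degree Segre-Veronese special} to the case $\bm = \bd = \bone$ and then carry out the resulting elementary coefficient extraction. First I would observe that with $m_i = 1$ for every $i$, each factor in the product of Proposition \ref{prop: RR degree Segre-Veronese special} collapses to a linear form: since $\tfrac{a^2 - b^2}{a - b} = a + b$, we obtain
\[
\frac{\widehat{h}_i^{\,m_i+1} - h_i^{m_i+1}}{\widehat{h}_i - h_i} = \widehat{h}_i + h_i .
\]
Next, using $d_j = 1$ for every $j$, the definition $\widehat{h}_i = \omega\bigl(\sum_{j=1}^k d_j h_j\bigr) - h_i$ simplifies to $\widehat{h}_i = \omega\bigl(\sum_{j=1}^k h_j\bigr) - h_i$, so that the $-h_i$ term cancels against the $+h_i$ and
\[
\widehat{h}_i + h_i = \omega\sum_{j=1}^k h_j ,
\]
a linear form that is independent of $i$.

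It then follows that the whole product appearing in Proposition \ref{prop: RR degree Segre-Veronese special} becomes
\[
\prod_{i=1}^k (\widehat{h}_i + h_i) = \prod_{i=1}^k \omega\Bigl(\sum_{j=1}^k h_j\Bigr) = \omega^k\Bigl(\sum_{j=1}^k h_j\Bigr)^{k} ,
\]
and it remains only to read off the coefficient of the monomial $h_1^{m_1}\cdots h_k^{m_k} = h_1 h_2 \cdots h_k$ in this polynomial. By the multinomial theorem, the coefficient of $h_1 \cdots h_k$ in $\bigl(\sum_{j=1}^k h_j\bigr)^{k}$ is the multinomial coefficient $\binom{k}{1,1,\ldots,1} = k!$, whence $\RRdeg_\omega(X) = \omega^k k!$.

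The argument has essentially no hard step; the only point requiring a little care is that the cancellation $\widehat{h}_i + h_i = \omega\sum_j h_j$ uses both $m_i = 1$ (so the geometric factor truncates after one step) and $d_i = 1$ (so the weight $L = \sum_j d_j h_j$ reduces to $\sum_j h_j$). One should also note that the coefficient extraction in Proposition \ref{prop: RR degree Segre-Veronese special} is performed on the polynomial expansion, before imposing the Chow-ring relations $h_i^{m_i+1} = h_i^2 = 0$; this is harmless here, since the target monomial $h_1 \cdots h_k$ is squarefree and hence unaffected by those relations. For a sanity check one can compare with Example \ref{ex: RR degree product lines transversal Q}: for $k = 2$ this gives $\RRdeg_\omega(X) = 2\omega^2$, strictly smaller than the transversal value $2(\omega^2 + 2)$, reflecting the non-transversality of $X$ with $Q_\mBW$.
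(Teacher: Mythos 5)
Your proof is correct and follows essentially the same route as the paper: specialize each factor to $\widehat{h}_i + h_i = \omega\sum_j h_j$, obtain $\omega^k(\sum_j h_j)^k$, and extract the coefficient $k!$ of $h_1\cdots h_k$ via the multinomial theorem. The extra remarks on the squarefree target monomial and the comparison with Example~\ref{ex: RR degree product lines transversal Q} are sound but not needed.
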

\begin{proof}
Here $\RRdeg_\omega(X)$ is the coefficient of $h_1\cdots h_k$ in the expansion of 
\[
\prod_{i=1}^k\frac{\widehat{h}_i^{m_i+1}-h_i^{m_i+1}}{\widehat{h}_i-h_i} = \prod_{i=1}^k \omega(h_1+\cdots+h_k) = \omega^k(h_1+\cdots+h_k)^k\,,
\]
that is precisely $\omega^kk!$. This concludes the proof.
\end{proof}

\begin{corollary}\label{corol: RR degree special rank-one matrices}
Assume $k=2$ and $\bd=(1,1)$ in Proposition \ref{prop: RR degree Segre-Veronese special}. Then
\[
    \RRdeg_\omega(X) = \sum_{i=1}^{\min\{m_1,m_2\}+1}\left[\sum_{\ell=0}^{m_1-i+1}\sum_{s=0}^{m_2-i+1}\binom{\ell+i-1}{\ell}\binom{s+i-1}{s}(\omega-1)^{\ell+s}\right]\omega^{2(i-1)}\,.
\]
In particular, when $\omega=2$, 
\[
\RRdeg_2(X) = \sum_{i=1}^{\min\{m_1,m_2\}+1}\binom{m_1+1}{i}\binom{m_2+1}{i}4^{i-1}\,.
\]
\end{corollary}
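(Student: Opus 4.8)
The plan is to specialize the generating-function identity of Proposition~\ref{prop: RR degree Segre-Veronese special} to $k=2$, $\bd=(1,1)$ and to read off the relevant coefficient by a direct expansion. Here $\widehat h_1=\omega(h_1+h_2)-h_1=(\omega-1)h_1+\omega h_2$ and $\widehat h_2=\omega h_1+(\omega-1)h_2$, so $\RRdeg_\omega(X)$ is the coefficient of $h_1^{m_1}h_2^{m_2}$ in $\frac{\widehat h_1^{m_1+1}-h_1^{m_1+1}}{\widehat h_1-h_1}\cdot\frac{\widehat h_2^{m_2+1}-h_2^{m_2+1}}{\widehat h_2-h_2}$.

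First I would use the elementary identity $\frac{a^{n+1}-b^{n+1}}{a-b}=\sum_{j=0}^{n}a^{j}b^{n-j}$ to write the $i$-th factor as $\sum_{a=0}^{m_i}\widehat h_i^{\,a}h_i^{\,m_i-a}$. Binomial-expanding $\widehat h_1^{\,a}=\bigl((\omega-1)h_1+\omega h_2\bigr)^{a}$ and $\widehat h_2^{\,a'}=\bigl(\omega h_1+(\omega-1)h_2\bigr)^{a'}$ and collecting terms by the exponent of the ``opposite'' variable, a short reindexing ($b=a-p$) gives that the coefficient of $h_1^{m_1-p}h_2^{p}$ in the first factor equals $\omega^{p}\sum_{b=0}^{m_1-p}\binom{b+p}{b}(\omega-1)^{b}$, and symmetrically the coefficient of $h_1^{q}h_2^{m_2-q}$ in the second factor equals $\omega^{q}\sum_{s=0}^{m_2-q}\binom{s+q}{s}(\omega-1)^{s}$.

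Next I would multiply the two factors and extract the coefficient of $h_1^{m_1}h_2^{m_2}$: matching exponents forces $q=p$, so only the ``diagonal'' pairs survive, and the contribution of each is a product of two independent sums. Setting $i=p+1$ and letting $p$ range over $0,\dots,\min\{m_1,m_2\}$ then produces exactly the claimed formula, with the double sum $\sum_{\ell,s}\binom{\ell+i-1}{\ell}\binom{s+i-1}{s}(\omega-1)^{\ell+s}$ being the product of the two single sums above. For the case $\omega=2$ I would set $\omega-1=1$, so that each single sum reduces to $\sum_{\ell=0}^{m_1-i+1}\binom{\ell+i-1}{\ell}$ (and the analogue with $m_2$), and apply the hockey-stick identity $\sum_{\ell=0}^{N}\binom{\ell+p}{p}=\binom{N+p+1}{p+1}$ with $p=i-1$, $N=m_j-i+1$, which collapses these to $\binom{m_1+1}{i}$ and $\binom{m_2+1}{i}$ and leaves $\omega^{2(i-1)}=4^{i-1}$.

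There is no conceptual obstacle: the whole argument is a coefficient extraction from an explicit polynomial in $h_1,h_2$. The only point requiring care is the bookkeeping --- tracking the three summation indices, recognizing the exponent constraint that collapses the double sum over $(p,q)$ to a single sum over $i$, and checking that the resulting index ranges ($0\le\ell\le m_1-i+1$, $0\le s\le m_2-i+1$, $1\le i\le\min\{m_1,m_2\}+1$) match the statement exactly rather than being off by one.
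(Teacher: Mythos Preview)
Your proposal is correct and follows essentially the same route as the paper: expand each factor via $\frac{a^{n+1}-b^{n+1}}{a-b}=\sum_j a^j b^{n-j}$, binomial-expand $\widehat h_i^{\,a}$, reindex to obtain the coefficient $\omega^{p}\sum_{b=0}^{m_i-p}\binom{b+p}{b}(\omega-1)^{b}$ of $h_1^{m_1-p}h_2^{p}$ (resp.\ $h_1^{q}h_2^{m_2-q}$), match $p=q$ to extract the coefficient of $h_1^{m_1}h_2^{m_2}$, and then apply the hockey-stick identity for $\omega=2$. The paper's proof is exactly this computation, with the same reindexing and the same final binomial identity.
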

\begin{proof}
We start noticing that
\begin{align*}
&\frac{((\omega-1)h_1+\omega h_2)^{m_1+1}-h_1^{m_1+1}}{(\omega-2)h_1+\omega h_2}=\sum_{i=0}^{m_1}\sum_{j=0}^i\binom{i}{j}(\omega-1)^{i-j}\omega^{j}h_1^{m_1-j}h_2^{j}\\
&=\sum_{j=0}^{m_1}\left(\sum_{i=j}^{m_1}\binom{i}{j}(\omega-1)^{i-j}\right)\omega^{j}h_1^{m_1-j}h_2^{j}=\sum_{j=0}^{m_1}\left(\sum_{\ell=0}^{m_1-j}\binom{\ell+j}{j}(\omega-1)^\ell\right)\omega^{j}h_1^{m_1-j}h_2^{j}\,. 
\end{align*}
Applying Proposition \ref{prop: RR degree Segre-Veronese special} with $k=2$, $\bd=(1,1)$, we need to extract the coefficient of $h_1^{m_1}h_2^{m_2}$ in the expansion of
\[
\frac{((\omega-1)h_1+\omega h_2)^{m_1+1}-h_1^{m_1+1}}{(\omega-2)h_1+\omega h_2}\cdot \frac{(\omega h_1+(\omega-1)h_2)^{m_2+1}-h_2^{m_2+1}}{\omega h_1+(\omega-2)h_2}\,.    
\]
By the first equality, we obtain
\begin{align*}
\RRdeg_\omega(X) &= \sum_{j=0}^{\min\{m_1,m_2\}}\left(\sum_{\ell=0}^{m_1-j}\binom{\ell+j}{j}(\omega-1)^\ell\right)\left(\sum_{s=0}^{m_2-j}\binom{s+j}{j}(\omega-1)^s\right)\omega^{2j}\\
&= \sum_{i=1}^{\min\{m_1,m_2\}+1}\left[\sum_{\ell=0}^{m_1-i+1}\sum_{s=0}^{m_2-i+1}\binom{\ell+i-1}{\ell}\binom{s+i-1}{s}(\omega-1)^{\ell+s}\right]\omega^{2(i-1)}\,.
\end{align*}
Finally, when $\omega=2$, the previous equality can be simplified using the following well-known equality
\[
    \sum_{j=0}^n\binom{j+p-1}{j}=\binom{n+p}{n}\,.\qedhere
\]
\end{proof}

Next, we keep the setting of Proposition \ref{prop: RR degree Segre-Veronese special} and highlight the correspondence among the critical points of the constrained Rayleigh quotient optimization problem \eqref{eq: main optimization} and the normalized singular tuples of a $k$-homogeneous polynomial associated with the data symmetric tensor $f$.
Given tuples $\bm$, $\bd$, and $\be$ in $\N^k$, we identify $\bigotimes_{i=1}^k\mathrm{Sym}^{e_i}\C^{m_i+1}$ with $\C[x]_\bd\coloneqq\bigotimes_{i=1}^k\C[x_i]_{d_i}$, the space of $k$-homogeneous polynomials of multidegree $\bd$, where $x_i=(x_{i,0},\ldots,x_{i,m_i})$ for all $i\in[k]$. We write each element $f\in\C[x]_\bd$ as
\[
f = \sum_{\alpha_1,\ldots,\alpha_k} c_{\alpha_1\cdots\alpha_k}\prod_{i=1}^k\binom{d_i}{\alpha_i}x_i^{\alpha_i}
\]
for some coefficients $c_{\alpha_1\cdots\alpha_k}\in\C$, where the sum ranges over all $(\alpha_1,\ldots,\alpha_k)\in\prod_{i=1}^k\N^{m_i+1}$ such that $|\alpha_i|=d_i$ for all $i\in[k]$.

\begin{definition}\label{def: singular tuples}
Let $f\in\C[x]_\bd$. A {\em (normalized) singular $k$-tuple of $f$} is a tuple $(v_1,\ldots,v_k)$ of vectors $v_i\in\C^{m_i+1}$ such that
\[
\sum_{j=0}^{m_i}v_{i,j}^2=1\quad\text{and}\quad\frac{1}{d_i}\nabla_i f(v_1,\ldots,v_k) = \sigma\,v_i\quad\forall\,i\in[k]
\]
for some $\sigma\in\C$, where $\nabla_i f$ is the gradient vector of $f$ with respect to the $i$th vector of coordinates. The value $\sigma$ is the {\em (normalized) singular value} associated with $(v_1,\ldots,v_k)$.
\end{definition}

In projective space, the singular $k$-tuples of $f$ are the fixed points of the $k$-gradient map
\[
\nabla f\colon\PP^\bm \dashrightarrow \PP^\bm\,,\quad\nabla f([x_1],\ldots,[x_k])\coloneqq([\nabla_i f(x_1,\ldots,x_k)])_{i=1}^k\,.
\]

\begin{proposition}\label{prop: critical points and singular tuples}
Consider the assumptions of Proposition \ref{prop: RR degree Segre-Veronese special}
and the projection $\pi\colon\mathrm{Sym}^\omega V\to\bigotimes_{i=1}^k\mathrm{Sym}^{\omega d_i}\C^{m_i+1}$.
For any $f\in\mathrm{Sym}^\omega V$, the locus of normalized $X$-eigenvectors of $f$ coincides with the locus of normalized singular tuples of $\pi(f)\in\mathrm{Sym}^{\omega d_i}\C^{m_i+1}=\C[x]_{\omega\bd}$.
\end{proposition}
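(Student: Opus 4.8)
The plan is to combine the two reductions already established in the paper. First, by Proposition~\ref{prop: equivalence} (applied with the variety $\nu_\bd(\PP^\bm)$ in place of $X$, and reading off $f$ as a symmetric tensor in $\mathrm{Sym}^\omega V$), a point $[\psi]$ is a normalized $X$-eigenpoint of $f$ precisely when the corresponding rank-one point $(\psi^*)^\omega$ is a critical point of $\dist_{q_\mBW,f}^2$ on $C(\nu_\omega(X))$, with eigenvalue $\lambda=f(\psi)$. Next, following the key observation in the proof of Proposition~\ref{prop: RR degree Segre-Veronese special}, the variety $Z = C(\nu_\omega\circ\nu_\bd(\PP^\bm))$ is degenerate in $W=\mathrm{Sym}^\omega V$: its linear span is $\langle Z\rangle = \bigotimes_{i=1}^k\mathrm{Sym}^{\omega d_i}\C^{m_i+1} \cong \C[x]_{\omega\bd}$, and the critical-point locus of $\dist_{q_\mBW,f}^2$ on $Z$ is unchanged if we replace $f$ by its orthogonal projection $\pi(f)$ onto $\langle Z\rangle$ and restrict $q_\mBW$ accordingly. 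So the normalized $X$-eigenvectors of $f$ are exactly the critical points on the affine cone over $\nu_{\omega\bd}(\PP^\bm)$ of the Bombieri-Weyl distance function from $\pi(f)\in\C[x]_{\omega\bd}$.

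It then remains to identify, for a multihomogeneous polynomial $F=\pi(f)\in\C[x]_{\omega\bd}$, the critical points of $\dist_{q_\mBW,F}^2$ on $C(\nu_{\omega\bd}(\PP^\bm))$ with the normalized singular $k$-tuples of $F$. This is the exact multihomogeneous analogue of the computation carried out in Proposition~\ref{prop: equivalence}, and I would redo that computation in the Segre-Veronese setting: a point $\mu\prod_{i=1}^k(v_i^*)^{\omega d_i}$ on the cone over $\nu_{\omega\bd}(\PP^\bm)$, with each $v_i\in S_q$, is critical for $\dist_{q_\mBW,F}^2$ iff $F-\mu\prod_i(v_i^*)^{\omega d_i}$ is orthogonal, in the Bombieri-Weyl inner product, to the tangent space $T_{\prod_i(v_i^*)^{\omega d_i}}C(\nu_{\omega\bd}(\PP^\bm)) = \bigoplus_{i=1}^k \big\{ \prod_{j\ne i}(v_j^*)^{\omega d_j}\cdot (v_i^*)^{\omega d_i-1}\varphi_i^* : \varphi_i\in\C^{m_i+1}\big\}$. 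Pairing against $\prod_{j\ne i}(v_j^*)^{\omega d_j}(v_i^*)^{\omega d_i-1}x_{i,\ell}$ and using the multihomogeneous version of the identity~\eqref{eq: identities BW inner product}, together with $q(v_j)=1$, one computes $\langle F,\,\cdot\,\rangle_\mBW = \tfrac{1}{\omega d_i}\partial_{i,\ell}F(v_1,\ldots,v_k)$ and $\langle \prod_i(v_i^*)^{\omega d_i},\,\cdot\,\rangle_\mBW = v_{i,\ell}$, so that criticality is equivalent to $\tfrac{1}{\omega d_i}\nabla_i F(v_1,\ldots,v_k) = \mu\,v_i$ for every $i\in[k]$. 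Rescaling $\sigma$ appropriately (the factor $\omega d_i$ versus $d_i$ in Definition~\ref{def: singular tuples} is absorbed, since $\nu_{\omega\bd}$ has multidegree $\omega\bd$), this is precisely the defining condition for a normalized singular $k$-tuple of $F$, and Euler's identity gives $\mu=F(v_1,\ldots,v_k)$ as before. Reversing the implications identifies the two loci of points of $\PP^\bm$.

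The main obstacle I anticipate is purely bookkeeping rather than conceptual: one must be careful that the tangent space to the \emph{composed} image $\nu_\omega(\nu_\bd(\PP^\bm))$ inside $\mathrm{Sym}^\omega V$ coincides, after projecting to $\langle Z\rangle$, with the tangent space to $\nu_{\omega\bd}(\PP^\bm)$ computed directly as a Segre-Veronese variety — this is where the degeneracy reduction from Proposition~\ref{prop: RR degree Segre-Veronese special} does the real work and should be invoked explicitly rather than glossed over. One also wants to record that a point of $C(\nu_{\omega\bd}(\PP^\bm))$ lying over $[\psi]\in\PP^\bm$ with $\psi$ outside the isotropic quadric can always be normalized so that each factor $v_i\in S_q$, which is why the ``normalized'' hypothesis in Definition~\ref{def: singular tuples} matches the sphere constraint in~\eqref{eq: main optimization}. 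With these two points spelled out, the chain ``$X$-eigenvector of $f$'' $\Leftrightarrow$ ``critical point of $\dist_{q_\mBW,f}^2$ on $Z$'' $\Leftrightarrow$ ``critical point of $\dist_{q_\mBW,\pi(f)}^2$ on $C(\nu_{\omega\bd}(\PP^\bm))$'' $\Leftrightarrow$ ``normalized singular tuple of $\pi(f)$'' yields the claim.
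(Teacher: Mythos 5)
Your proof is correct and takes essentially the same route as the paper: it reduces via Proposition~\ref{prop: equivalence} and the degenerate-variety projection from the proof of Proposition~\ref{prop: RR degree Segre-Veronese special} to the Bombieri--Weyl distance function from $\pi(f)$ on $C(\nu_{\omega\bd}(\PP^\bm))$, and then carries out a Proposition~\ref{prop: equivalence}-style pairing computation in the multihomogeneous setting to identify critical points with normalized singular $k$-tuples. The paper compresses the last step into a single sentence with a citation to Friedland--Ottaviani, whereas you write the tangent-space pairing out explicitly, but the logical skeleton is identical.
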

\begin{proof}
Following the argument in the proof of Proposition \ref{prop: RR degree Segre-Veronese special}, for any $f\in\mathrm{Sym}^\omega V$, the locus of normalized $X$-eigenvectors of $f$ coincides with the locus of critical points of the Bombieri-Weyl distance function from $\pi(f)$ restricted to $C(\nu_{\omega\bd}(\PP^\bm))\subseteq\C[x]_{\omega\bd}$. With a similar argument as in the proof of Proposition \ref{prop: equivalence}, one can verify that the latter is precisely the locus of normalized singular $k$-tuples of $\pi(f)$, see also \cite[\S 7]{friedland2014number}.
\end{proof}

In the following, we describe in more detail the projection $\pi$ of Proposition \ref{prop: critical points and singular tuples}. For the sake of brevity, we choose $\bd=\bone\in\N^k$ and $\omega=2$. A similar study can be carried out for any $\bd\in\N^k$ and $\omega\ge 2$. First, we consider a small example.

\begin{example}
The first nontrivial case holds for $\bn=(1,1)$. Let $X=\nu_{(1,1)}(\PP^1\times\PP^1)\subseteq\PP(\C^2\otimes\C^2)$ and consider its Veronese embedding $\nu_2(X)\subseteq\PP(\mathrm{Sym}^2(\C^2\otimes\C^2))\cong\PP^9$. The projective span of $\nu_2(X)$ is $\PP(\mathrm{Sym}^2\C^2\otimes\mathrm{Sym}^2\C^2)\cong\PP^8$, a hyperplane in $\PP(\mathrm{Sym}^2(\C^2\otimes\C^2))\cong\PP^9$. In coordinates, we write a symmetric matrix $H\in\mathrm{Sym}^2(\C^2\otimes\C^2)$ as
\[
H =
\begin{pmatrix}
    h_{(0,0),(0,0)} & h_{(0,0),(0,1)} & h_{(0,0),(1,0)} & h_{(0,0),(1,1)} \\
    h_{(0,0),(0,1)} & h_{(0,1),(0,1)} & h_{(0,1),(1,0)} & h_{(0,1),(1,1)} \\
    h_{(0,0),(1,0)} & h_{(0,1),(1,0)} & h_{(1,0),(1,0)} & h_{(1,0),(1,1)} \\
    h_{(0,0),(1,1)} & h_{(0,1),(1,1)} & h_{(1,0),(1,1)} & h_{(1,1),(1,1)}
\end{pmatrix}\,.
\]
We identify $\PP(\mathrm{Sym}^2\C^2\otimes\mathrm{Sym}^2\C^2)$ as the hyperplane in $\PP(\mathrm{Sym}^2(\C^2\otimes\C^2))$ of equation $h_{(0,0),(1,1)}=h_{(0,1),(1,0)}$. Then the projection $\pi\colon \mathrm{Sym}^2(\C^2\otimes\C^2)\to\mathrm{Sym}^2\C^2\otimes\mathrm{Sym}^2\C^2$ maps $H$ to the bi-homogeneous polynomial
\[
f_H = \sum_{|\alpha_1|=|\alpha_2|=2}\binom{2}{\alpha_1}\binom{2}{\alpha_2}c_{\alpha_1,\alpha_2}x_1^{\alpha_1}x_2^{\alpha_2}\,,
\]
where
\[
\begin{gathered}
c_{(2,0)(2,0)} = h_{(0,0),(0,0)}\,,\ c_{(2,0)(1,1)} = h_{(0,0),(0,1)}\,,\ c_{(2,0)(0,2)} = h_{(0,1),(0,1)}\,,\\
c_{(1,1)(2,0)} = h_{(0,0),(1,0)}\,,\ c_{(1,1)(1,1)} = \frac{1}{2}(h_{(0,0),(1,1)}+h_{(0,1),(1,0)})\,,\ c_{(1,1)(0,2)} = h_{(0,1),(1,1)}\,,
\\
c_{(0,2)(2,0)} = h_{(1,0),(1,0)}\,,\ c_{(0,2)(1,1)} = h_{(1,0),(1,1)}\,,\ c_{(0,2)(0,2)} = h_{(1,1),(1,1)}\,.
\end{gathered}
\]
Applying Proposition \ref{prop: RR degree Segre-Veronese special}, $\RRdeg_2(\PP^1\times\PP^1)$ equals the coefficient of the monomial $h_1h_2$ in the expansion of $4(h_1+h_2)^2$, which is $8$. Therefore, a generic symmetric matrix $H\in\mathrm{Sym}^2(\C^2\otimes\C^2)$ gives $8$ pairwise distinct critical points of the Rayleigh quotient optimization problem \eqref{eq: main optimization} constrained to $X=\nu_{(1,1)}(\PP^1\times\PP^1)$, corresponding to the $8$ singular pairs of $f_H$.\hfill$\diamondsuit$
\end{example}

To generalize the previous example, we introduce the following notation.
For every $\bm=(m_1,\ldots,m_k)$, define $I_\bm\coloneqq\prod_{\ell=1}^k\{0,\ldots,m_\ell\}$. We consider $\{z_\bi\}_{\bi\in I_\bm}$ as a set of coordinates of $V=\bigotimes_{\ell=1}^k\C^{m_\ell+1}$. Let $\preceq$ denote the lexicographic order in $I_\bm$. We define also the set $\{h_{\bi,\bj}\}_{\bi\preceq\bj\in I_\bm}$ of variables of $\mathrm{Sym}^2V$, where $h_{\bi,\bj}\coloneqq h_{\bj,\bi}$ if $\bj\preceq\bi$. We define the relation $(\bi,\bj)\sim(\bi',\bj')$ if and only if
\[
\left(\text{{\tt\#} $0$\,s in $(i_\ell,j_\ell)$},\ldots,\text{{\tt\#} $m_\ell$\,s in $(i_\ell,j_\ell)$}\right)=\left(\text{{\tt\#} $0$\,s in $(i'_\ell,j'_\ell)$},\ldots,\text{{\tt\#} $m_\ell$\,s in $(i'_\ell,j'_\ell)$}\right)\quad\forall\,\ell\in[k]\,.
\]
One verifies that this is an equivalence relation. We denote by $\alpha(\bi,\bj)=(\alpha_1(\bi,\bj),\ldots,\alpha_k(\bi,\bj))\in\N^{|I_\bm|}$ the tuple where
\begin{equation}\label{eq: relations multiindices alpha}
\alpha_\ell(\bi,\bj) \coloneqq \left(\text{{\tt\#} $0$s in $(i_\ell,j_\ell)$},\text{{\tt\#} $1$s in $(i_\ell,j_\ell)$},\ldots,\text{{\tt\#} $m_\ell$s in $(i_\ell,j_\ell)$}\right)\quad\forall\,\ell\in[k]\,.
\end{equation}
The tuple $\alpha=(\alpha_1,\ldots, \alpha_k)$ encodes exactly the equivalence classes of the relation $\sim$. We denote by $p(\alpha_1,\ldots,\alpha_k)$ the set of all pairs $(\bi,\bj)$ with $\alpha(\bi,\bj)=\alpha$.

\begin{proposition}\label{prop: correspondence critical points Rayleigh and singular tuples}
Consider the space $V=\bigotimes_{\ell=1}^k\C^{m_\ell+1}$ and the variety $X=\nu_\bone(\PP^\bm)\subseteq\PP(V)$.
The projective span $\langle\nu_2(X)\rangle\subseteq\PP(\mathrm{Sym}^2 V)$ is defined by the equations $h_{\bi,\bj} = h_{\bi',\bj'}$ whenever $(\bi,\bj)\sim(\bi',\bj')$.

For any symmetric matrix $H=(h_{\bi,\bj})_{\bi\preceq\bj\in I_\bm}\in\mathrm{Sym}^2V$, the critical points of the optimization problem \eqref{eq: main optimization} with respect to the Bombieri-Weyl isotropic quadric in $\PP(V)$ correspond to the singular $k$-tuples of the $k$-homogeneous polynomial $f_H=(c_{\alpha_1\cdots\alpha_k})_{\alpha_\ell\in\N^{m_\ell+1},|\alpha_\ell|=2}\in\C[x]_{\boldsymbol{2}}=\C[x]_{(2,\ldots,2)}$ where
\[
c_{\alpha_1\cdots\alpha_k}=\frac{1}{|p(\alpha_1,\ldots,\alpha_k)|}\sum_{(\bi,\bj)\in p(\alpha_1,\ldots,\alpha_k)}h_{\bi,\bj}\,.
\]
\end{proposition}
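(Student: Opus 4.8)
The plan is to treat this as essentially a coordinate-level unpacking of Proposition~\ref{prop: critical points and singular tuples} combined with the description of the linear span $\langle\nu_2(X)\rangle$. First I would establish the equations cutting out $\langle\nu_2(X)\rangle$. A spanning set of the cone $C(\nu_2(X))$ is given by the symmetric squares $(\psi^*)^2$ where $\psi=v_1\otimes\cdots\otimes v_k$ is a decomposable tensor, and its entries in the variables $h_{\bi,\bj}$ are products $\prod_\ell (v_\ell)_{i_\ell}(v_\ell)_{j_\ell}$. Such a product depends on the pair $(\bi,\bj)$ only through, for each factor $\ell$, the multiset $\{i_\ell,j_\ell\}$; equivalently through the tuple $\alpha(\bi,\bj)$ of multiplicities defined in \eqref{eq: relations multiindices alpha}. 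Hence every element of $\langle\nu_2(X)\rangle$ has equal $h_{\bi,\bj}$-coordinates on each equivalence class $p(\alpha_1,\ldots,\alpha_k)$, giving the inclusion ``$\subseteq$''. For the reverse inclusion I would count: the quotient $\bigotimes_\ell\mathrm{Sym}^2\C^{m_\ell+1}$ has dimension $\prod_\ell\binom{m_\ell+2}{2}$, which is exactly the number of $\sim$-classes, and $\nu_2(X)$ spans $\bigotimes_\ell\mathrm{Sym}^2\C^{m_\ell+1}$ as a subspace of $\mathrm{Sym}^2 V$ by the same identity $\langle Z\rangle=\bigotimes_i\mathrm{Sym}^{\omega d_i}\C^{m_i+1}$ used in the proof of Proposition~\ref{prop: RR degree Segre-Veronese special} with $\omega=2$, $\bd=\bone$. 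So the two linear subspaces have the same dimension, proving equality.

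Next I would identify the projection $\pi\colon\mathrm{Sym}^2 V\to\langle\nu_2(X)\rangle=\bigotimes_\ell\mathrm{Sym}^2\C^{m_\ell+1}$ explicitly. By definition $\pi$ is the orthogonal projection with respect to the Bombieri-Weyl quadratic form $q_\mBW$ on $\mathrm{Sym}^2 V$. Using Lemma~\ref{lem: properties BW inner product} (with $\omega=2$), the $q_\mBW$-orthonormal-type basis is built from the monomials $z_\bi z_\bj$ weighted by $\binom{2}{\mathbf{1}_{\bi=\bj}+1}$-type multinomials; the key point is that distinct $\sim$-classes are $q_\mBW$-orthogonal, while within a single class $p(\alpha_1,\ldots,\alpha_k)$ all basis vectors enter symmetrically. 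Therefore the orthogonal projection of $H=(h_{\bi,\bj})$ onto the subspace ``constant on each $\sim$-class'' simply replaces each coordinate by the average of the $h_{\bi,\bj}$ over its class, which is precisely the asserted formula $c_{\alpha_1\cdots\alpha_k}=|p(\alpha_1,\ldots,\alpha_k)|^{-1}\sum_{(\bi,\bj)\in p(\alpha_1,\ldots,\alpha_k)}h_{\bi,\bj}$; matching the binomial normalization convention for writing $f_H=(c_{\alpha_1\cdots\alpha_k})\in\C[x]_{\boldsymbol 2}$ is a bookkeeping step that mirrors the $k=2$, $m_1=m_2=1$ computation in the preceding example.

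Finally, the statement about critical points is immediate from Proposition~\ref{prop: critical points and singular tuples} applied with $\omega=2$, $\bd=\bone$: that proposition says the normalized $X$-eigenvectors of $H\in\mathrm{Sym}^2 V$ coincide with the normalized singular $k$-tuples of $\pi(H)\in\C[x]_{\boldsymbol 2}$, and we have just identified $\pi(H)$ with $f_H$. Combined with the remark (just after Definition~\ref{def: RR degree}) that critical points of \eqref{eq: main optimization} come in antipodal pairs, this yields the correspondence exactly as stated. The main obstacle I anticipate is purely combinatorial-notational: carefully verifying that the multinomial weights in the Bombieri-Weyl form on $\mathrm{Sym}^2 V$ are indeed constant on each $\sim$-class (so that the projection is an honest average rather than a weighted average), and reconciling the two normalization conventions --- the $h_{\bi,\bj}$ being the naive symmetric-matrix entries versus the $c_{\alpha_1\cdots\alpha_k}$ being coefficients with the $\prod_i\binom{d_i}{\alpha_i}$ prefactors. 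Everything else is a direct specialization of results already proved in Sections~\ref{sec: RR degree} and~\ref{sec: RR degrees singular tuples}.
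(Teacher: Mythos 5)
Your proposal follows the paper's strategy closely: identify the $\sim$-class equations cutting out $\langle\nu_2(X)\rangle$, invoke Proposition~\ref{prop: critical points and singular tuples} to reduce to singular $k$-tuples of $\pi(H)$, and realize $\pi$ as class averaging. The dimension count you use to pin down the span is slightly more explicit than the paper's (which just reads off the relations $z_\bi z_\bj = z_{\bi'}z_{\bj'}$ from the Segre embedding), but both are valid.

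The one place you are imprecise is the ``main obstacle'' you flag: whether the Bombieri--Weyl weights $2-\delta_{\bi\bj}$ must be constant on each $\sim$-class for the stated formula to be the $q_\mBW$-orthogonal projection. They need not be assumed constant, because the sum in the formula runs over $p(\alpha)$, which is a set of \emph{ordered} pairs. Each off-diagonal unordered pair $\{\bi,\bj\}$ with $\bi\ne\bj$ contributes twice to $p(\alpha)$, as $(\bi,\bj)$ and $(\bj,\bi)$ (both lie in $p(\alpha)$ since $\alpha(\bi,\bj)=\alpha(\bj,\bi)$, and $h_{\bi,\bj}=h_{\bj,\bi}$), while each diagonal pair $(\bi,\bi)$ contributes once. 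Hence $\sum_{(\bi,\bj)\in p(\alpha)} h_{\bi,\bj} = \sum_{\bi\preceq\bj,\ \alpha(\bi,\bj)=\alpha}(2-\delta_{\bi\bj})\,h_{\bi,\bj}$ and $|p(\alpha)| = \sum_{\bi\preceq\bj,\ \alpha(\bi,\bj)=\alpha}(2-\delta_{\bi\bj})$, so the asserted $c_{\alpha}$ is \emph{exactly} the BW-weighted average over the class, i.e., the $q_\mBW$-orthogonal projection, with no further hypothesis. (Incidentally your suspicion is also true: the weights \emph{are} constant on each class, since a diagonal pair $(\bi,\bi)$ makes every $\alpha_\ell$ contain a $2$, whereas an off-diagonal pair makes some $\alpha_\ell$ contain two $1$'s, so diagonal and off-diagonal pairs never share a class. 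But this observation is not needed once one notices $p(\alpha)$ is a set of ordered pairs.) With that clarification, your argument is correct and essentially the paper's own proof.
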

\begin{proof}
Given a symmetric matrix $H\in\mathrm{Sym}^2V$, by Proposition \ref{prop: critical points and singular tuples} the critical points of the squared Bombieri-Weyl distance function from $H$ are the same as the critical points of the projection $\pi(H)$ of $H$ onto $\bigotimes_{\ell=1}^k\mathrm{Sym}^2\C^{m_\ell+1}$. The restriction of the Bombieri-Weyl quadratic form on $\mathrm{Sym}^2V$ induced by the standard Euclidean inner product on $V$ coincides with the Bombieri-Weyl quadratic form on $\bigotimes_{\ell=1}^k\mathrm{Sym}^2\C^{m_\ell+1}$ induced by the standard Euclidean inner products on the factors $\C^{m_\ell+1}$, for all $\ell\in[k]$. Furthermore, the critical points of the Bombieri-Weyl distance function from a partially symmetric tensor $f\in\bigotimes_{\ell=1}^k\mathrm{Sym}^2\C^{m_\ell+1}=\C[x]_{\boldsymbol{2}}$ to the cone over $\nu_{\boldsymbol{2}}(\PP^\bm)$ correspond to the normalized singular $k$-tuples of $f$.

The Veronese embedding $\nu_2$ maps the point $[\{z_\bi\}_{\bi\in I_\bm}]\in\PP(V)$ to the point $[\{z_\bi z_\bj\}_{\bi\preceq\bj\in I_\bm}]\in\PP(\mathrm{Sym}^2V)$. After identifying each variable $z_\bi$ with the monomial $\prod_{\ell=1}^k x_{\ell,i_\ell}$ in the Segre embedding $\nu_\bone\colon\PP^\bm\hookrightarrow\PP(V)$, then $z_\bi z_\bj = z_{\bi'}z_{\bj'}$ for every $(\bi,\bj)\sim(\bi',\bj')$.
Each of these quadratic relations in the $z_\bi$'s corresponds to a linear relation $h_{\bi,\bj} = h_{\bi',\bj'}$ in the variables of $\PP(\mathrm{Sym}^2V)$, namely these equations define the projective span $\langle\nu_2(X)\rangle=\PP(\bigotimes_{\ell=1}^k\mathrm{Sym}^2\C^{m_\ell+1})$ as a subspace of $\PP(\mathrm{Sym}^2V)$. From this, we define the projection
\[
\pi\colon\mathrm{Sym}^2V\to\bigotimes_{\ell=1}^k\mathrm{Sym}^2\C^{m_\ell+1}\,,\quad H=(h_{\bi,\bj})_{\bi\preceq\bj\in I_\bm}\mapsto f_H=(c_{\alpha_1\cdots\alpha_k})_{\alpha_\ell\in\N^{m_\ell+1},|\alpha_\ell|=2}\,,
\]
by averaging over the equivalence classes of $\sim$ as in the statement of the proposition.
\end{proof}

We conclude this section with a discussion on real $X$-eigenvectors of real homogeneous polynomials.
In \cite{kozhasov2018fully}, Kozhasov showed that, for any degree $\omega$, there exists a real polynomial $f\in\R[x]_\omega$ all of whose eigenpoints are real and pairwise distinct, and their number is equal to $\mathrm{DD}(\nu_\omega(\PP^n),Q_\mBW)$. In the case of ternary forms, such examples can be constructed via real line arrangements, see \cite[Theorem 6.1]{abo2017eigenconfigurations}. In the following example, we exhibit a real projective variety $X\subseteq\PP^n$ such that, for every $f\in\R[x]_\omega$, if $f$ admits $\RRdeg_\omega(X)$ pairwise distinct eigenpoints, then not all of them are real.
This implies that $\RRdeg_\omega(X)$ is not always a sharp upper bound for the number of real $X$-eigenvectors of a real homogeneous polynomial $f\in\R[x]_\omega$.

\begin{example}\label{ex: no six real}
Consider the plane conic $X\subseteq\PP^2$ of equation $x_1^2-x_0x_2=0$, image of the Veronese embedding $\nu_2\colon\PP^1\hookrightarrow\PP^2$ defined by $\nu_2([t_0:t_1])=[t_0^2:t_0t_1:t_1^2]$.
Let $Q\subseteq\PP^2$ be the isotropic quadric of equation $q(x)=x_0^2+x_1^2+x_2^2=0$, then $X\cap Q$ consists of four distinct points, hence $X$ is in general position with respect to $Q$. Following up on Example \ref{ex: RR degree X nonsingular curve}, we have $\RRdeg_2(X)=6$. We show that, given a homogeneous polynomial
\[
f=c_{200}x_0^2+2\,c_{110}x_0x_1+2\,c_{101}x_0x_2+c_{020}x_1^2+2\,c_{011}x_1x_2+c_{002}x_2^2\in\R[x_0,x_1,x_2]_2\,,
\]
if $f$ admits $6$ pairwise distinct $X$-eigenpoints, then either two or four of them are real, but never six. Using Lemma \ref{lem: polynomial map}, the six $X$-eigenpoints correspond, via $\nu_2$, to the roots $[t_0:t_1]\in\PP^1$ of the binary sextic
\begin{equation}\label{eq: det 2x2 matrix}
\det
\begin{pmatrix}
    \frac{\partial(f\circ\nu_2)}{\partial t_0} & \frac{\partial(f\circ\nu_2)}{\partial t_1}\\[2pt]
    \frac{\partial(q\circ\nu_2)}{\partial t_0} & \frac{\partial(q\circ\nu_2)}{\partial t_1}
\end{pmatrix}
= 8\cdot p(t_0,t_1)\,,
\end{equation}
where
\[
\resizebox{\textwidth}{!}{
$\begin{aligned}
    p = \underbrace{c_{110}}_{y_0}t_0^6+\underbrace{(c_{020}+2c_{101}-c_{200})}_{y_1}t_0^5t_1+\underbrace{(3c_{011}-c_{110})}_{y_2}t_0^4t_1^2+\underbrace{2(c_{002}-c_{200})}_{y_3}t_0^3t_1^3+\underbrace{(c_{011}-3c_{110})}_{y_4}t_0^2t_1^4+\underbrace{(c_{002}-c_{020}-2c_{101})}_{y_5}t_0t_1^5\underbrace{-c_{011}}_{y_6}t_1^6\,.
\end{aligned}$}
\]
We checked that the coefficients $y_0,\ldots,y_6$ of $p$ satisfy three independent linear relations
\begin{equation}\label{eq: linear conditions}
3\,y_2-y_4+8\,y_6 = 2\,y_1-y_3+2\,y_5 = 3\,y_0+y_4+y_6 = 0\,.
\end{equation}
Working modulo the ideal generated by the linear polynomials in \eqref{eq: linear conditions}, we verified that the discriminant of a binary sextic $\sum_{i=0}^6y_it_0^{6-i}t_1^i$, a homogeneous polynomial of degree $10$ in the coefficients $y_0,\ldots,y_6$, specializes to the product of the following three polynomials (up to a scalar factor):
\begin{align}\label{eq: polynomials g1 g2 g3}
\begin{split}
    g_1 &= \left(15\,y_3-4\,y_4-24\,y_5+56\,y_6\right)^2+27\left(y_{3}-4\,y_{4}+4\,y_{5}\right)^2\\
    g_2 &= \left(15\,y_3+4\,y_4-24\,y_5+56\,y_6\right)^2+27\left(y_{3}+4\,y_{4}+4\,y_{5}\right)^2\\
    g_3 &= 864\,y_{3}^{2}y_{4}^{2}y_{5}^{2}+6\,084\,y_{4}^{4}y_{5}^{2}-3\,456\,y_{3}^{3}y_{5}^{3}-29\,376\,y_{3}y_{4}^{2}y_{5}^{3}+20\,736\,y_{3}^{2}y_{5}^{4}+13\,824\,y_{4}^{2}y_{5}^{4}\\
    &\quad-41\,472\,y_{3}y_{5}^{5}+27\,648\,y_{5}^{6}-3\,456\,y_{3}^{2}y_{4}^{3}y_{6}-24\,336\,y_{4}^{5}y_{6}+15\,552\,y_{3}^{3}y_{4}y_{5}y_{6}\\
    &\quad+129\,672\,y_{3}y_{4}^{3}y_{5}y_{6}-93\,312\,y_{3}^{2}y_{4}y_{5}^{2}y_{6}-60\,912\,y_{4}^{3}y_{5}^{2}y_{6}+196\,992\,y_{3}y_{4}y_{5}^{3}y_{6}\\
    &\quad-228\,096\,y_{4}y_{5}^{4}y_{6}-23\,328\,y_{3}^{4}y_{6}^{2}-164\,268\,y_{3}^{2}y_{4}^{2}y_{6}^{2}+14\,352\,y_{4}^{4}y_{6}^{2}+143\,856\,y_{3}^{3}y_{5}y_{6}^{2}\\
    &\quad+8\,028\,y_{3}y_{4}^{2}y_{5}y_{6}^{2}-296\,352\,y_{3}^{2}y_{5}^{2}y_{6}^{2}+542\,736\,y_{4}^{2}y_{5}^{2}y_{6}^{2}+241\,920\,y_{3}y_{5}^{3}y_{6}^{2}-117\,504\,y_{5}^{4}y_{6}^{2}\\
    &\quad+42\,444\,y_{3}^{2}y_{4}y_{6}^{3}-327\,376\,y_{4}^{3}y_{6}^{3}-772\,776\,y_{3}y_{4}y_{5}y_{6}^{3}+525\,312\,y_{4}y_{5}^{2}y_{6}^{3}+326\,457\,y_{3}^{2}y_{6}^{4}\\
    &\quad-87\,312\,y_{4}^{2}y_{6}^{4}-1\,143\,936\,y_{3}y_{5}y_{6}^{4}+408\,384\,y_{5}^{2}y_{6}^{4}-984\,624\,y_{4}y_{6}^{5}-1\,263\,376\,y_{6}^{6}\,.
\end{split}
\end{align}
Let $W\subseteq\PP(\R[t_0,t_1]_6)$ denote the three-dimensional real projective subspace defined by the linear polynomials in \eqref{eq: linear conditions}. We are interested in the complement of the real hypersurface in $W$ cut out by the polynomial $g_1g_2g_3$. In each connected component, the number of real roots $[t_0:t_1]$ of a binary sextic is constant.
Since $g_1$ and $g_2$ are sums of two distinct squares, the complement of the zero locus cut out by $g_1g_2$ is connected. Therefore, it is enough to study the connected components of the complement of the hypersurface cut out by $g_3$. We performed this computation using the \verb|Julia| \cite{bezanson2017julia} package \verb|HypersurfaceRegions|. For details on the package, see \cite{breiding2025computing}. After declaring variables and the polynomial $g_3$, the command \verb|regions()| returns the output displayed in Figure \ref{fig: regions}, confirming that there are exactly two connected regions in the complement of $g_3$ in $W$. The real zero locus of $g_3(y_3,y_4,y_5,y_6)$ in the chart $\{y_6=1\}$ is displayed in Figure \ref{fig: plot}. In the two connected regions of the complement, there are either two or four distinct real solutions. For example, we verified that starting from the real ternary forms
\begin{align*}
f_1 &= 42\,x_{0}^{2}+280\,x_{0}x_{1}+267\,x_{1}^{2}-120\,x_{0}x_{2}+420\,x_{1}x_{2}-63\,x_{2}^{2}\\
f_2 &= 189\,x_{0}^{2}+56\,x_{0}x_{1}+195\,x_{1}^{2}+120\,x_{0}x_{2}-168\,x_{1}x_{2}+147\,x_{2}^{2}
\end{align*}
the corresponding binary sextics, up to a constant factor, are respectively
\begin{align*}
    p_1 &= 4\,t_{0}^{6}+3\,t_{0}^{5}t_{1}+14\,t_{0}^{4}t_{1}^{2}-6\,t_{0}^{3}t_{1}^{3}-6\,t_{0}^{2}t_{1}^{4}-6\,t_{0}t_{1}^{5}-6\,t_{1}^{6}\\
    p_2 &= 2\,t_{0}^{6}+9\,t_{0}^{5}t_{1}-20\,t_{0}^{4}t_{1}^{2}-6\,t_{0}^{3}t_{1}^{3}-12\,t_{0}^{2}t_{1}^{4}-12\,t_{0}t_{1}^{5}+6\,t_{1}^{6}\,.
\end{align*}
For $j\in\{1,2\}$, the roots of $p_j$ in $\PP^1$ are of the form $[1:\alpha]$ with $\alpha\in A_j$, where
\begin{align*}
    A_1 &= \{-0.673903,\,1.04536,\,-0.058973\pm0.730365\,i,\,-0.501755\pm1.92718\,i\}\\
    A_2 &= \{-6.08441,\,-0.807225,\,0.34297,\,2.04914,\,-0.000239\pm0.932267\,i\}\,,
\end{align*}
in particular $f_1$ and $f_2$ have respectively two and four distinct real $X$-eigenvectors, and the previous argument shows that it is not possible to find a real ternary quadric $f$ with $6$ pairwise distinct real $X$-eigenvectors.\hfill$\diamondsuit$
\end{example}

\begin{figure}[!htb]
    \centering
    \begin{minipage}{0.45\textwidth}
        \centering
        \includegraphics[width=\textwidth]{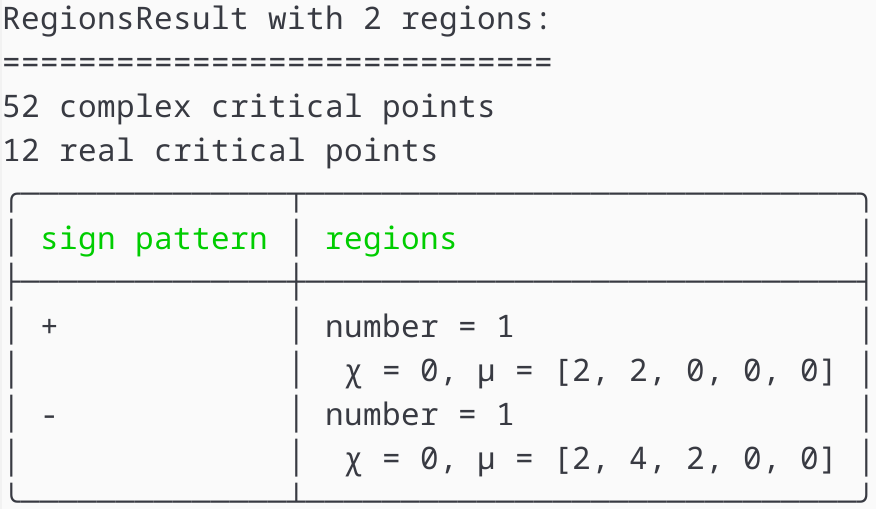}
        \caption{The output of {\tt HypersurfaceRegions.jl}.}
        \label{fig: regions}
    \end{minipage}%
    \begin{minipage}{0.55\textwidth}
        \centering
        \begin{overpic}[width=0.9\textwidth]{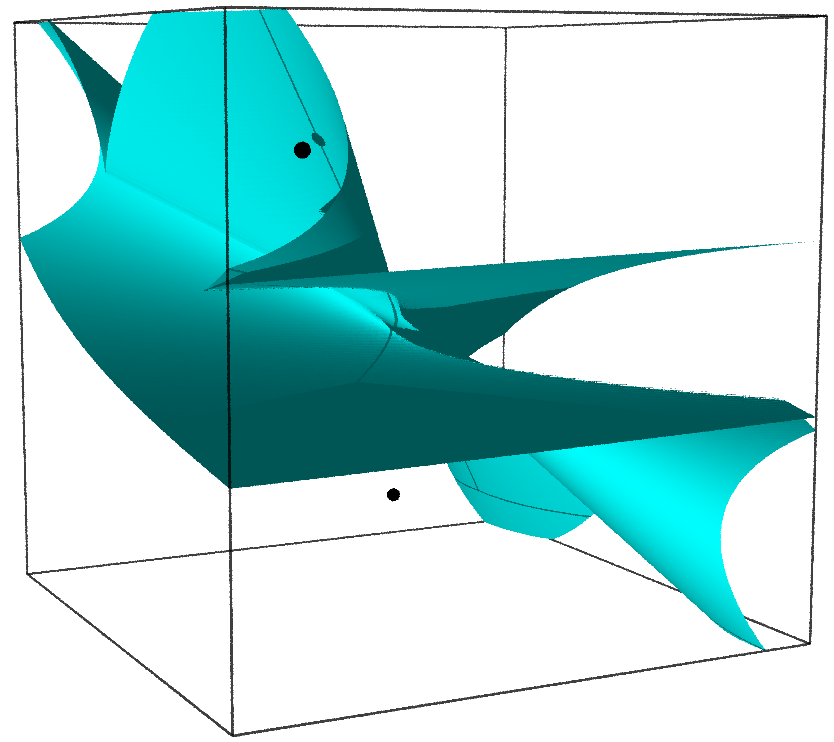}
            \put (31,74) {$p_1$}
            \put (50,28) {$p_2$}
        \end{overpic}
        \caption{The real zero locus of the polynomial $g_3$ in \eqref{eq: polynomials g1 g2 g3}.}
        \label{fig: plot}
    \end{minipage}
\end{figure}

In the previous example, we considered the standard Euclidean quadratic form $q(x)=x_0^2+x_1^2+x_2^2$. Choosing instead the Bombieri-Weyl quadratic form $q_\mBW(x)=x_0^2+2\,x_1^2+x_2^2$, then $X$ is no longer in general position with respect to $Q$, indeed $X\cap Q$ is scheme-theoretically the union of two double points. More precisely, we are in the situation of Proposition \ref{prop: RR degree Segre-Veronese special} for $k=1$, $m_1=1$, $d_1=2$, and $\omega=2$. The Rayleigh-Ritz degree in this case is $4$. Furthermore, applying Proposition \ref{prop: critical points and singular tuples}, the $X$-eigenvectors of $f\in\mathrm{Sym}^2(\mathrm{Sym}^2\C^2)$ are the eigenvectors of the binary quartic $\pi(f)\in\mathrm{Sym}^4\C^2$, and we know by the results in \cite{kozhasov2018fully} that it is possible to find a generic real binary quartic with exactly four distinct normalized eigenvectors. This translates to the interesting property that, in this case, the polynomial $p(t_0,t_1)$ in \eqref{eq: det 2x2 matrix} factors as $p=(t_0^2+t_1^2)\cdot p'$, where
\[
\resizebox{\textwidth}{!}{
$\begin{aligned}
p' = c_{110}t_{0}^{4}+(c_{020}-2\,c_{200}+2\,c_{101})t_{0}^{3}t_{1}+3(c_{011}-c_{110})t_{0}^{2}t_{1}^{2}+(2\,c_{002}-c_{020}-2\,c_{101})t_{0}t_{1}^{3}-c_{011}t_{1}^{4}
\end{aligned}$.}
\]
The first factor $t_0^2+t_1^2$ is one on the Bombieri-Weyl unit sphere because $q_\mBW\circ\nu_2(t_0,t_1)=(t_0^2+t_1^2)^2$. With a similar computation as in Example \ref{ex: no six real}, we verified that the factor $p'$ can have either two or four distinct real roots in $\PP^1$ for any choice of real parameters $c_{ijk}$.

\section*{Acknowledgements}

We thank Viktoriia Borovik, Hannah Friedman, Serkan Ho\c{s}ten, Khazhgali Kozhasov, Max Pfeffer, and M\'{a}t\'{e} L. Telek for the fruitful discussions and the valuable feedback received.
We would also like to thank Bernd Sturmfels for suggesting the idea of the project.
F. S. is supported by the P500PT-222344 SNSF project. J. W. is supported by the SPP 2458 ``Combinatorial Synergies'', funded by the Deutsche Forschungsgemeinschaft (DFG, German Research Foundation), project ID: 539677510.

\bibliographystyle{alpha}
\bibliography{biblio}

\end{document}